\documentclass[bibliography=totoc,final
]{scrartcl}
\usepackage[utf8]{inputenc}
\usepackage{hyperref}
\usepackage[english]{babel}
\usepackage{enumerate}
\usepackage{aliascnt}
\usepackage{amssymb}
\usepackage{amsmath}
\usepackage{verbatim}
\usepackage{mathtools}
\usepackage{nicefrac}

\usepackage{tikz}

\usepackage{amsthm}
\usepackage{cleveref}


\usepackage[notcite,notref]{showkeys}
\usepackage{color}


\usepackage{dsfont} 
\usepackage{hyperref} 

\setlength{\parindent}{0pt}  
\allowdisplaybreaks[4] 

\newtheorem{prop}{Proposition}[section]

\newaliascnt{lem}{prop} 
\newtheorem{lem}[lem]{Lemma}

\Crefname{lem}{Lemma}{Lemmas}
\aliascntresetthe{lem} 

\newaliascnt{defn}{prop} 
 \newtheorem{defn}[defn]{Definition}

\aliascntresetthe{defn} 

\newaliascnt{cor}{prop} 
 \newtheorem{cor}[cor]{Corollary}
 
\aliascntresetthe{cor}

\newaliascnt{rem}{prop} 
 \newtheorem{rem}[rem]{Remark}
 
\aliascntresetthe{rem} 

\newaliascnt{thm}{prop} 
 \newtheorem{thm}[thm]{Theorem}
 
\aliascntresetthe{thm} 
\Crefname{thm}{Theorem}{Theorems}

\newaliascnt{hyp}{prop} 
 \newtheorem{hyp}[hyp]{Hypothesis}
 
\aliascntresetthe{hyp}

\newaliascnt{defnlem}{prop} 
 \newtheorem{defnlem}[defnlem]{Definition and Lemma}
 
\aliascntresetthe{defnlem}

\Crefname{hyp}{Hypothesis}{Hypotheses}

\newaliascnt{example}{prop} 
 \newtheorem{example}[example]{Example}
 
\aliascntresetthe{example}



\def\equationautorefname~#1\null{%
  (#1)\null
}

\newcommand{\R}{\ensuremath{\mathbb{R}}}
\newcommand{\N}{\ensuremath{\mathbb{N}}}
\newcommand{\Z}{\ensuremath{\mathbb{Z}}}

\newcommand*\diff{\mathop{}\!\mathrm{d}}

\newcommand{\defeq}{\vcentcolon=}
\newcommand{\eqdef}{=\vcentcolon}
\newcommand{\CalE}{\ensuremath{\mathcal{E}}}
\newcommand{\CalC}{\ensuremath{\mathcal{C}}}

\newcommand{\abs}[1]{\ensuremath{\left \lvert #1\right\rvert}}
\newcommand{\Norm}[2]{\ensuremath{\left\Vert #1 \right\Vert_{#2}}}
\newcommand{\norm}[2]{\ensuremath{\Vert #1 \Vert_{#2}}}
\newcommand{\Image}{\operatorname{Im}}
\newcommand{\codim}{\operatorname{codim}}
\newcommand{\dtzero}{\left.\frac{\diff}{\diff t}\right\vert_{t=0}}
\DeclareMathOperator{\ind}{ind}
\DeclareMathOperator{\Id}{Id}
\DeclareMathOperator{\graph}{graph}
\DeclareMathOperator{\divergence}{div}

\title{On the \texorpdfstring{{\L}ojasiewicz}{Lojasiewicz}--Simon gradient inequality on submanifolds}
\author{Fabian Rupp\thanks{Institute of Analysis, Ulm University, Helmholtzstra\ss e 18, 89081 Ulm, Germany. \texttt{fabian.rupp@uni-ulm.de}.}} 
\begin{document}

\maketitle

\begin{abstract}
\noindent \textbf{Abstract:} We provide sufficient conditions for the {\L}ojasiewicz-Simon gradient inequality to hold on a submanifold of a Banach space and discuss the optimality of our assumptions. Our result provides a tool to study asymptotic properties of quasilinear parabolic equations with (nonlinear) constraints.
\end{abstract}


\bigskip
\noindent \textbf{Keywords:} {\L}ojasiewicz--Simon gradient inequality, constrained gradient flows, mean curvature flow with isoperimetric constraint, constrained Allen--Cahn equation.
 
 \noindent \textbf{MSC(2010)}: 26D10 (primary),  46T05, 37C10  (secondary).


  



\section{Introduction}
In real algebraic geometry, the \emph{{\L}ojasiewicz inequality} is a remarkable result describing the particular behavior of an analytic function near a critical point. 
\begin{thm}[{\L}ojasiewicz inequality,
	{\cite[Théorème 4]{Loja63}}]\label{thm:LSfinitedim}
	Let $U\subset \R^n$ be open. If $\mathcal{E}\in \mathcal{C}^{\omega}(U;\R)$ and $\bar{u}\in U$ satisfies $\nabla\mathcal{E}(\bar{u})=0$, then there exist $C, \sigma>0$ and $\theta\in (0,\frac{1}{2}]$ such that for all $\norm{u-\bar{u}}{}\leq \sigma$, we have
	\begin{align}\label{eq:LSfinitedim}
	|\mathcal{E}(u)-\mathcal{E}(\bar{u})|^{1-\theta}\leq C \norm{\nabla\mathcal{E}(u)}{}.
	\end{align}
\end{thm}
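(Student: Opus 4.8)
\textit{Proof proposal.}\ The plan is to reduce \eqref{eq:LSfinitedim} to a one–variable statement along analytic arcs issuing from $\bar u$, and to extract the exponent from orders of vanishing. First I would normalise: translating, assume $\bar u=0$; subtracting a constant, which does not affect $\nabla\mathcal E$, assume $\mathcal E(0)=0$. If $\mathcal E$ vanishes on a whole neighbourhood of $0$, both sides of \eqref{eq:LSfinitedim} vanish and there is nothing to prove, so assume $\mathcal E\not\equiv 0$ near $0$. Put $g:=\|\nabla\mathcal E\|^2$ and $h:=\mathcal E^2$, both real analytic, nonnegative and vanishing at $0$. Squaring, \eqref{eq:LSfinitedim} is equivalent to a {\L}ojasiewicz-type comparison $g\ge c\,h^{\beta}$ on some ball $B_\sigma$ with $\beta=1-\theta$; and since $h\le 1$ near $0$, the requirement $\beta\in[\tfrac12,1)$ costs nothing once $\beta<1$ is reached (replace $\beta$ by $\max(\beta,\tfrac12)$, which only weakens the estimate), so the real content is the \emph{strict} inequality $\beta<1$, i.e.\ $\theta>0$. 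One also notes that $\{\nabla\mathcal E=0\}\cap B_r\subseteq\{\mathcal E=0\}$ for small $r$, because $\mathcal E$ is locally constant on the real analytic critical set, which is locally connected; this is why a comparison between $g$ and $h$ is plausible in the first place.

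The main tool is the curve selection lemma for semianalytic sets. Assume, for contradiction, that \eqref{eq:LSfinitedim} holds for no admissible $C,\sigma,\theta$. Then for every rational $\beta=p/q\in(0,1)$ the semianalytic set $S_\beta:=\{u\in B_r: g(u)^{q}<h(u)^{p}\}$ has $0$ in its closure: otherwise $g\ge h^{\beta}$, hence $\|\nabla\mathcal E\|\ge|\mathcal E|^{\beta}$, on a full ball $B_\rho$, which is \eqref{eq:LSfinitedim} with $\theta=1-\beta$ if $\beta\ge\tfrac12$ and with $\theta=\tfrac12$ otherwise (using $|\mathcal E|\le1$ on $B_\rho$) — a contradiction. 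Since $g^{q}<h^{p}$ forces $h>0$, i.e.\ $\mathcal E\ne 0$, on $S_\beta$, curve selection produces a real analytic curve $\gamma\colon[0,\varepsilon)\to B_r$ with $\gamma(0)=0$ and $\gamma(s)\in S_\beta$ for $s>0$, along which $\mathcal E\circ\gamma\not\equiv 0$; hence $\mathcal E(\gamma(s))=a\,s^{m}(1+o(1))$ with $a\ne 0$, $m\ge 1$, and $\gamma(s)=s^{q_0}(v+o(1))$ with $v\ne 0$, $q_0\ge 1$, so that $\|\gamma'(s)\|\lesssim s^{q_0-1}$ for $s$ small. By the chain rule, Cauchy--Schwarz, and $g(\gamma(s))<h(\gamma(s))^{\beta}$,
\begin{align*}
  |a|\,m\,s^{m-1}(1+o(1))
  &= \Bigl|\tfrac{d}{ds}\mathcal E(\gamma(s))\Bigr|
   = \bigl|\langle\nabla\mathcal E(\gamma(s)),\gamma'(s)\rangle\bigr|\\
  &\le \|\nabla\mathcal E(\gamma(s))\|\,\|\gamma'(s)\|
   < |\mathcal E(\gamma(s))|^{\beta}\,\|\gamma'(s)\|
   \lesssim s^{\,m\beta+q_0-1},
\end{align*}
and comparing orders of vanishing in $s$ as $s\to0^+$ forces $m(1-\beta)\ge q_0\ge 1$, i.e.\ $m\ge 1/(1-\beta)$.

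This last step is where the difficulty sits, and I expect it to be the main obstacle. For a single fixed $\beta<1$ the computation above only exhibits an arc along which $\mathcal E$ vanishes to order $m\ge 1/(1-\beta)$, which is in no way absurd: an analytic function does vanish to arbitrarily high order along arcs that hug its zero set. One must therefore upgrade the one-arc conclusion to a statement uniform on a full neighbourhood. Two ways to do this: (i) use the finiteness properties of semianalytic sets to choose an arc realising the \emph{worst} local exponent, which is in essence {\L}ojasiewicz's original device; or (ii) induct on $n$, slicing with a generic hyperplane and invoking the static {\L}ojasiewicz inequality $|\mathcal E(u)|\ge c_0\operatorname{dist}(u,\mathcal E^{-1}(0))^{\nu}$ (provable by the same curve selection) to handle the arcs approaching the zero set, the one-variable case $\mathcal E(x)=a\,x^{k}(1+o(1))$ with $k\ge 2$, which gives $\theta=1/k\le\tfrac12$, anchoring the recursion. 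A conceptually cleaner but heavier route is resolution of singularities: pull $\mathcal E$ back by a proper analytic map to monomial form $x^{\vec a}\cdot(\mathrm{unit})$ in finitely many charts, check \eqref{eq:LSfinitedim} for monomials by hand — where $\theta=1/|\vec a|\in(0,\tfrac12]$ simply falls out — and push the estimate down, the only delicacy being the degeneration of the Jacobian along the exceptional divisor. In any case, once $g\ge c\,h^{\beta}$ with some $\beta\in(0,1)$ is in hand, raising $\beta$ to $\tfrac12$ where needed and taking square roots yields \eqref{eq:LSfinitedim} with $\theta=1-\max(\beta,\tfrac12)\in(0,\tfrac12]$ and $C=c^{-1/2}$.
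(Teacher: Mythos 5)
The paper itself offers no proof of this statement: it is quoted directly from Théorème 4 of \cite{Loja63} and used as a black box, so the only question is whether your argument stands on its own — and it does not, as you yourself flag. The normalization, the reduction to a comparison $g\ge c\,h^{\beta}$, the curve-selection setup, and the order-of-vanishing computation along a single arc are all correct, but they stop exactly where the theorem begins: for a fixed $\beta<1$ you only exhibit one arc along which $\mathcal{E}$ vanishes to order at least $1/(1-\beta)$, which, as you say, is in no way absurd. The three devices you list for upgrading this to a uniform estimate on a neighbourhood — an arc realising the ``worst local exponent'' via semianalytic finiteness, induction on $n$ with slicing, or resolution of singularities — are each precisely the substance of {\L}ojasiewicz's theorem, and none is carried out. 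In particular, ``choose an arc realising the worst local exponent'' presupposes that a finite worst exponent exists, which is essentially the statement to be proved, and the slicing sketch does not explain how an inequality on a hyperplane slice controls the gradient component transverse to the slice. So this is a reasonable plan with the central step missing, not a proof.

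A smaller remark on the route you call heavier: the delicacy you point to in the resolution argument (degeneration of the Jacobian along the exceptional divisor) is harmless in the direction actually needed, since $\nabla(\mathcal{E}\circ\pi)(y)=D\pi(y)^{T}\,\nabla\mathcal{E}(\pi(y))$ requires only an upper bound on $\Norm{D\pi}{}$, available on a compact neighbourhood of the fibre $\pi^{-1}(\bar{u})$; pushing the inequality down then costs nothing. The genuine work there is the monomialization itself, the chart-by-chart verification for $x^{\vec{a}}$ times a unit (where, as you note, one may take $\theta=1/\abs{\vec{a}}$, and one must check that the unit's derivative cannot cancel the leading term), the uniformity of $\theta$ and of the constants over the finitely many charts covering the compact fibre, and the fact that $\pi$ maps onto a full neighbourhood of $\bar{u}$. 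Carrying that out would turn your sketch into a complete proof; as written, it is not one.
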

Throughout this article, we write $\mathcal{C}^{\omega}(U;X)$ for the set of real analytic functions from an open set $U$ of a Banach space $V$ into another Banach space $X$. All vector spaces are understood to be over the field of real numbers $\R$. The space of bounded linear operators between two normed spaces $X$ and $Y$ is denoted by $\mathcal{L}(X,Y)$ and we write  $X^{\ast}\defeq \mathcal{L}(X,\R)$ for the continuous dual of $X$.

In $\R^n$, inequality \eqref{eq:LSfinitedim} was discovered and proven by S. {\L}ojasiewicz in his famous works on semianalytic and subanalytic sets, \cite{Loja63,Loja65}. Since then, \Cref{thm:LSfinitedim} has been used as a celebrated  tool to prove convergence results for the gradient flow of analytic energies on finite-dimensional spaces (see \cite{Loja84}). The pioneering work of L. Simon in \cite{Simon83} extended inequality \eqref{eq:LSfinitedim} to certain energy functions on infinite-dimensional function spaces using \emph{Lyapunov--Schmidt reduction} and, in honor of his significant contributions, the inequality is nowadays often called \emph{{\L}ojasiewicz--Simon gradient inequality.} In more recent work by Kurdyka \cite{Kurdyka98}, {\L}ojasiewicz's convergence result has been extended to a larger class of functions via the \emph{Kurdyka--{\L}ojasiewicz inequality}.
Over the last decades, gradient inequalities like \eqref{eq:LSfinitedim} have been {extensively} studied in various situations to analyze the long time behavior of gradient flows, see for instance { \cite{CFS09,DPS16,FeinSim2000,HaJen99,RybkaHoffmann}}. 
In \cite{Jendoubi981,Jendoubi982}, this is also done for second order evolution equations.
Loosely speaking, whenever an energy $\mathcal{E}$ satisfies a {\L}ojasiewicz--Simon gradient inequality at a critical point {$\bar{u} = \lim_{n\to\infty} u(t_n)$}, where $t_n\to\infty$ and $u=u(t)$ is a precompact solution to the associated gradient flows
\begin{align*}
\left\{\begin{array}{lll}
\partial_t u &= - \nabla\mathcal{E}(u), & t>0\\
u(0)&=u_0,&
\end{array}\right.
\end{align*}
we may conclude that $u$ converges with {$\lim_{t\to\infty}u(t) = \bar{u}$}. Numerical applications of this phenomenon have been considered  for instance in \cite{Andrews05,HBR10}.

Hence, it is a question of great interest, whether a given energy function satisfies a {\L}ojasiewicz--Simon gradient inequality. It can be shown that in the infinite-dimensional case, mere analyticity of the energy is not enough, see for instance \cite[Theorem 2.1, Proposition 3.5]{Haraux11}. On the other hand, very general conditions which are sufficient for the gradient inequality to hold are presented in \cite{Chill03}.  

For most of the applications, one usually checks that the following conditions are satisfied, see \cite{CFS09,DPS16,Feehan15,Lengeler}. 
\begin{thm}[{Consequence of \cite[Corollary 3.11]{Chill03}}]\label{thm:FredholmLoja}
	Let $V$ be a Banach space, $U\subset V$ an open set, $\mathcal{E}\in \mathcal{C}^{\omega}(U;\R)$ and $\bar{u}\in U$ a critical point of $\mathcal{E}$. Suppose that
	\begin{enumerate}[(i)]
		\item there exists a Banach space $Z$ such that $V\hookrightarrow Z$ densely,
		\item $\mathcal{E}^{\prime}\in \mathcal{C}^{\omega}(U;Z^{\ast})$,
		\item the second derivative $\mathcal{E}^{\prime\prime}(\bar{u})\colon V\to Z^{\ast}$ is Fredholm of index zero.
	\end{enumerate}
	Then, there exist $C, \sigma>0$, $\theta\in (0,\frac{1}{2}]$ such that for all $u\in U$ with $\Norm{u-\bar{u}}{V}\leq \sigma$, we have
	\begin{align}
	\label{eq:LSlin}
	\abs{\mathcal{E}(u)-\mathcal{E}(\bar{u})}^{1-\theta}\leq C \Norm{\mathcal{E}^{\prime}(u)}{Z^{\ast}}.
	\end{align}
\end{thm}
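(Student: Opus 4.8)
The plan is a Lyapunov--Schmidt reduction to the finite-dimensional situation of \Cref{thm:LSfinitedim}; equivalently, one checks that \textup{(i)--(iii)} imply the hypotheses of \cite[Corollary 3.11]{Chill03} and quotes it, but I will sketch the self-contained argument. Abbreviate $L\defeq\mathcal{E}''(\bar{u})\in\mathcal{L}(V,Z^{\ast})$. Since $L$ is Fredholm of index zero, $K\defeq\ker L$ is finite-dimensional and $\Image L$ is closed in $Z^{\ast}$ with $\codim\Image L=\dim K$. First I would fix a closed complement $X$ of $K$ in $V$ with bounded projection $P\colon V\to K$, and a bounded projection $Q\colon Z^{\ast}\to Z^{\ast}$ with $\ker Q=\Image L$; then $L|_X\colon X\to\Image L$ is a Banach space isomorphism (as $K\cap X=\{0\}$ and $L(V)=L(X)$), and $1-Q$ is the identity on $\Image L=\ker Q$. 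I also recall that $V\hookrightarrow Z$ lets each $\varphi\in Z^{\ast}$ act on $V$ with $\abs{\langle\varphi,h\rangle}\leq C\Norm{\varphi}{Z^{\ast}}\Norm{h}{V}$.

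Using hypothesis \textup{(ii)}, the map $(v,w)\mapsto(1-Q)\mathcal{E}'(\bar{u}+v+w)$ is real analytic from a neighbourhood of $(0,0)$ in $K\times X$ into $\Image L$, vanishes at $(0,0)$, and has $w$-derivative $(1-Q)L|_X$ there, which is invertible. By the analytic implicit function theorem I obtain $\rho>0$ and a real analytic map $w\colon B_K(0,\rho)\to X$ with $w(0)=0$ and $(1-Q)\mathcal{E}'(\bar{u}+v+w(v))\equiv 0$. Setting $g(v)\defeq\mathcal{E}(\bar{u}+v+w(v))$ then defines a real analytic function on an open subset of the finite-dimensional space $K$, with $g'(v)\xi=\langle\mathcal{E}'(\bar{u}+v+w(v)),\xi+w'(v)\xi\rangle$ by the chain rule, and $g'(0)=0$ since $\bar{u}$ is critical. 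Applying \Cref{thm:LSfinitedim} to $g$ at $0$ produces $C_0,\rho_0>0$ and some $\theta\in(0,\tfrac12]$ with $\abs{g(v)-g(0)}^{1-\theta}\leq C_0\abs{g'(v)}$ for $\abs{v}\leq\rho_0$; note $g(0)=\mathcal{E}(\bar{u})$, and this $\theta$ is exactly the exponent claimed.

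It remains to transfer this back to $\mathcal{E}$. For $u\in U$ with $\Norm{u-\bar{u}}{V}\leq\sigma$ ($\sigma$ small, to be chosen) I would write $v\defeq P(u-\bar{u})$, $\tilde{u}\defeq\bar{u}+v+w(v)$, and $\delta\defeq(1-P)(u-\bar{u})-w(v)\in X$, so $u=\tilde{u}+\delta$ with $\delta$ small. Since $\zeta\mapsto(1-Q)\mathcal{E}'(\bar{u}+v+w(v)+\zeta)$ has derivative at $\zeta=0$ close to the isomorphism $(1-Q)L|_X$ (for $\sigma$ small), it is a bi-Lipschitz local diffeomorphism near $0$ with uniformly controlled constants, whence $\Norm{\delta}{V}\leq C\Norm{(1-Q)\mathcal{E}'(u)}{Z^{\ast}}\leq C'\Norm{\mathcal{E}'(u)}{Z^{\ast}}$. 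Writing $\mathcal{E}'(\tilde{u})=\mathcal{E}'(u)-\int_0^1\mathcal{E}''(\tilde{u}+s\delta)\delta\diff s$ and using local boundedness of $\mathcal{E}''$ gives $\Norm{\mathcal{E}'(\tilde{u})}{Z^{\ast}}\leq C\Norm{\mathcal{E}'(u)}{Z^{\ast}}$, and then $\abs{g'(v)}\leq C\Norm{\mathcal{E}'(\tilde{u})}{Z^{\ast}}\leq C\Norm{\mathcal{E}'(u)}{Z^{\ast}}$ from the formula for $g'$ together with $V\hookrightarrow Z$ and continuity of $w'$ on $B_K(0,\rho)$. The crucial step is a second-order Taylor expansion in the $X$-direction,
\begin{equation*}
\mathcal{E}(u)-g(v)=\mathcal{E}(\tilde{u}+\delta)-\mathcal{E}(\tilde{u})=\langle\mathcal{E}'(\tilde{u}),\delta\rangle+\int_0^1(1-s)\,\langle\mathcal{E}''(\tilde{u}+s\delta)\delta,\delta\rangle\diff s,
\end{equation*}
which, bounding $\abs{\langle\mathcal{E}'(\tilde{u}),\delta\rangle}\leq C\Norm{\mathcal{E}'(\tilde{u})}{Z^{\ast}}\Norm{\delta}{V}$ and the remainder by $C\Norm{\delta}{V}^{2}$, yields $\abs{\mathcal{E}(u)-g(v)}\leq C\Norm{\mathcal{E}'(u)}{Z^{\ast}}^{2}$; it is precisely here that the nondegeneracy of $L$ on $X$ is used to gain an extra power.

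Finally, shrinking $\sigma$ so that $\abs{v}\leq\rho_0$ and $\Norm{\mathcal{E}'(u)}{Z^{\ast}}\leq 1$ whenever $\Norm{u-\bar{u}}{V}\leq\sigma$, subadditivity of $t\mapsto t^{1-\theta}$ on $[0,\infty)$ together with the estimates above gives
\begin{equation*}
\abs{\mathcal{E}(u)-\mathcal{E}(\bar{u})}^{1-\theta}\leq\abs{\mathcal{E}(u)-g(v)}^{1-\theta}+\abs{g(v)-g(0)}^{1-\theta}\leq C\Norm{\mathcal{E}'(u)}{Z^{\ast}}^{2(1-\theta)}+C_0\abs{g'(v)},
\end{equation*}
and since $2(1-\theta)\geq 1$ and $\Norm{\mathcal{E}'(u)}{Z^{\ast}}\leq 1$, both terms are $\leq C\Norm{\mathcal{E}'(u)}{Z^{\ast}}$, which is \eqref{eq:LSlin}. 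I expect the main obstacle to be the quantitative implicit function theorem step: one must extract the Lipschitz bound $\Norm{\delta}{V}\lesssim\Norm{(1-Q)\mathcal{E}'(u)}{Z^{\ast}}$ with constants uniform in $v$ over a fixed ball (so the final constants are independent of $u$) and combine it with the second-order expansion; once this is set up, the reduction to \Cref{thm:LSfinitedim} is routine bookkeeping.
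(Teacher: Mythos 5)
Your argument is correct, but it takes a genuinely different route from the paper. The paper's proof (\Cref{subsec:AppendixLoja}, following \cite[Appendix A]{DPS16}) does not redo the reduction at all: it verifies \Cref{hyp3.2,hyp3.4} and the hypotheses of \Cref{cor:3.11} with $X=V$, $Y=W=Z^{\ast}$, and then quotes Chill's result. The delicate points there are structural: constructing a projection $Q$ onto $\ker\mathcal{E}''(\bar u)$ that is simultaneously bounded on $V$ and on $Z$, showing that $Q^{\ast}$ leaves $Z^{\ast}$ invariant, and identifying $\Image \mathcal{E}''(\bar u)$ with the annihilator $V_1^{\ast}\cap Z^{\ast}$ of the kernel, which uses the symmetry of the second derivative (Schwarz) together with a codimension count. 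You instead carry out the Lyapunov--Schmidt reduction by hand and invoke only the finite-dimensional inequality \Cref{thm:LSfinitedim}: since you are free to choose an arbitrary complement of $\Image L$ in $Z^{\ast}$ for the projection $Q$, you never need the symmetry/annihilator identification, at the price of having to establish the uniform quantitative estimates yourself (the uniform lower Lipschitz bound giving $\Norm{\delta}{V}\lesssim\Norm{(1-Q)\mathcal{E}'(u)}{Z^{\ast}}$, the bound $\Norm{\mathcal{E}'(\tilde u)}{Z^{\ast}}\lesssim\Norm{\mathcal{E}'(u)}{Z^{\ast}}$, and the second-order expansion yielding $\abs{\mathcal{E}(u)-g(v)}\lesssim\Norm{\mathcal{E}'(u)}{Z^{\ast}}^{2}$); your sketch of these steps is sound, and the final splitting via subadditivity of $t\mapsto t^{1-\theta}$ with $2(1-\theta)\geq 1$ and $\Norm{\mathcal{E}'(u)}{Z^{\ast}}\leq 1$ closes the argument. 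One small misattribution: the nondegeneracy of $L$ on $X$ is consumed in the bi-Lipschitz step controlling $\delta$, not in the Taylor expansion itself, which only needs local boundedness of $\mathcal{E}''$ and the embedding $V\hookrightarrow Z$. In summary, the paper's route is shorter because the analytic core is delegated to \cite[Corollary 3.11]{Chill03}, while yours is self-contained, makes the mechanism (reduction to \Cref{thm:LSfinitedim}) explicit, and incidentally shows that the symmetry of $\mathcal{E}''(\bar u)$ plays no essential role in this special case.
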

\begin{rem}
	Note that by assumption (i) in \Cref{thm:FredholmLoja} we have $V\hookrightarrow Z$, so $Z^{\ast}$ can be identified with a subset of $V^{\ast}$. Condition (ii) requires that for all $u\in U$ the functional $\mathcal{E}^{\prime}(u)$ which is in general only in $V^{\ast}$ is in fact in $Z^{\ast}$ and the map $\mathcal{E}^{\prime}\colon U\to Z^{\ast}$ is analytic.
\end{rem}
Although \Cref{thm:FredholmLoja} describes a slightly less general situation than in \cite{Chill03}, in most applications its conditions are relatively easy to check and suffice to prove the {\L}ojasiewicz--Simon gradient inequality. The details on how to deduce \Cref{thm:FredholmLoja} from \cite{Chill03} are given in \Cref{subsec:AppendixLoja}.

To prove a suitable version of \Cref{thm:LSfinitedim} on a finite-dimensional manifold $\mathcal{M}$ is quite straightforward if $\mathcal{M}$ and $\mathcal{E}$ are analytic, by simply choosing local coordinates and applying \Cref{thm:LSfinitedim}. In \cite{Lageman07}, this is used to study \emph{gradient-like} dynamical systems via the Kurdyka--{\L}ojasiewicz inequality. The infinite-dimensional setting is more complicated.

Our main result is to extend \Cref{thm:FredholmLoja} to a constrained energy function $\mathcal{E}\vert_{\mathcal{M}}$ on a submanifold $\mathcal{M}$ of a Banach space $V$, and to refine the estimate by projecting the derivative onto the cotangent space of $\mathcal{M}$. In \cite{Lengeler}, a special case has been studied and a {\L}ojasiewicz--Simon gradient inequality is proven for the \emph{Canham--Helfrich energy} on the submanifold of closed embedded surfaces with fixed area and volume, see \cite[Theorem 1.4]{Lengeler}. In the following theorem, we give very general sufficient conditions for the {\L}ojasiewciz--Simon gradient inequality to hold on an infinite-dimensional submanifold in the abstract setting of an energy on a Banach space. In \Cref{sec:HilbertSpace}, we will consider the easier case where the ambient space is a Hilbert space. However, as we shall explain in detail in \Cref{rem:HilbertBad} below, in order to avoid issues with analyticity, it is sometimes necessary to work in Banach spaces, cf. also \Cref{subsec:isoperimetric}. Our main result is the following
\begin{thm}\label{thm:main}
	Let $V$ be a Banach space, $U\subset V$ an open set, $m\in\N$ and $\mathcal{E}\colon U \to\R$, $\mathcal{G}\colon U\to\R^m$ be analytic. Let $\bar{u}\in U$ and suppose that
	\begin{enumerate}[(i)]
		\item there exists a Banach space $Y$ such that $V\hookrightarrow Y$ densely,
		\item $\mathcal{E}^{\prime}\in \mathcal{C}^{\omega}(U;Y^{\ast})$,
		\item the second derivative $\mathcal{E}^{\prime\prime}(\bar{u}) \colon V\to Y^{\ast}$ is Fredholm of index zero,
		\item for any $u\in U$, the linear operator $\mathcal{G}^{\prime}(u)\in \mathcal{L}(V,\R^m)$ extends to $\overline{\mathcal{G}^{\prime}(u)} \in \mathcal{L}(Y, \R^m)$ and the map $\overline{\mathcal{G}^{\prime}}\colon U\to \mathcal{L}(Y,\R^m)$, $u\mapsto\overline{\mathcal{G}^{\prime}(u)}$ is analytic,
		\item the Fréchet derivative $\big(\overline{\mathcal{G}^{\prime}}\big)^{\prime}(\bar{u})\colon V\to\mathcal{L}(Y,\R^m)$ is compact,
		\item $\mathcal{G}(\bar{u})=0$ and $\mathcal{G}^{\prime}(\bar{u})\colon V\to \R^m$ is surjective.
	\end{enumerate}
	Then, $\mathcal{M} \defeq \{ u\in U\mid \mathcal{G}(u)=0\}$ is locally an analytic submanifold of $V$ of codimension $m$ near $\bar{u}$. 
	
	If $\bar{u}$ is a critical point of $\mathcal{E}\vert_{\mathcal{M}}$, then the restriction satisfies a \emph{refined {\L}ojasiewicz--Simon gradient inequality} at $\bar{u}$, i.e. there exist $C, \sigma>0$ and $\theta\in (0,\frac{1}{2}]$ such that for any $u\in \mathcal{M}$ with $\Norm{u-\bar{u}}{V}\leq \sigma$, we have
	\begin{align}\label{eq:LSnonlin}
	\abs{\mathcal{E}(u)-\mathcal{E}(\bar{u})}^{1-\theta}\leq C \Norm{\mathcal{E}^{\prime}(u)}{\overline{\mathcal{T}_u\mathcal{M}}^{\ast}}.
	\end{align}
	Here, $\overline{\mathcal{T}_u\mathcal{M}}^{\ast}$ is the dual of the closure $\overline{\mathcal{T}_u\mathcal{M}}\defeq \overline{\mathcal{T}_u\mathcal{M}}^{\Norm{\cdot}{Y}}\subset Y$ of the tangent space $\mathcal{T}_u\mathcal{M}$. 
\end{thm}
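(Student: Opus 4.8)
The plan is to reduce the constrained problem to an application of \Cref{thm:FredholmLoja} by parametrizing $\mathcal{M}$ near $\bar{u}$ with an analytic chart and transferring all the structure (the auxiliary space, the analyticity of the derivative, the Fredholm property) through this chart. First I would use (vi) and the analytic implicit function theorem: since $\mathcal{G}^{\prime}(\bar{u})$ is surjective onto $\R^m$ with splitting kernel $V_0 \defeq \ker\mathcal{G}^{\prime}(\bar{u})$, write $V = V_0 \oplus W$ with $\dim W = m$, and solve $\mathcal{G}(\bar{u} + v_0 + w) = 0$ for $w = w(v_0)$ analytically on a neighborhood of $0$ in $V_0$. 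This yields an analytic chart $\Phi\colon \mathcal{O} \subset V_0 \to \mathcal{M}$, $\Phi(v_0) = \bar{u} + v_0 + w(v_0)$, with $\Phi(0) = \bar{u}$ and $D\Phi(0) = \Id_{V_0}$, establishing the submanifold claim. Then the composed energy $\CalE_0 \defeq \CalE \circ \Phi\colon \mathcal{O} \to \R$ is analytic, and $\bar{u}$ being a critical point of $\CalE|_{\mathcal{M}}$ means $\CalE_0^{\prime}(0) = 0$, so I want to apply \Cref{thm:FredholmLoja} to $\CalE_0$ on the Banach space $V_0$.

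For this I must produce the auxiliary space. The natural candidate is $Y_0 \defeq \overline{V_0}^{\Norm{\cdot}{Y}} \subset Y$, the closure of $V_0 = \mathcal{T}_{\bar u}\mathcal{M}$ in $Y$; then $V_0 \hookrightarrow Y_0$ densely by construction, giving (i) for $\CalE_0$. The key analytic point is to extend $\Phi$ to a map on a $Y$-neighborhood: the correction term $w(v_0)$ is built by the implicit function theorem from $\mathcal{G}$, but by hypothesis (iv) the derivative $\mathcal{G}^{\prime}$ extends $Y$-continuously and analytically, so a careful inspection of the fixed-point construction shows $v_0 \mapsto w(v_0)$ extends to an analytic map from a neighborhood of $0$ in $Y_0$ into $W$ (note $W$ is finite dimensional so there is no issue on the target). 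Hence $\Phi$ extends to $\overline{\Phi}\colon \mathcal{O}_Y \subset Y_0 \to Y$ analytically. Next, $\CalE_0^{\prime}(v_0) = \CalE^{\prime}(\Phi(v_0)) \circ D\Phi(v_0)$; since $\CalE^{\prime}$ maps into $Y^{\ast}$ by (ii) and $D\overline{\Phi}(v_0) \in \CalL(Y_0, Y)$, the functional $\CalE_0^{\prime}(v_0)$ extends to an element of $Y_0^{\ast}$, depending analytically on $v_0 \in Y_0$; this is (ii) for $\CalE_0$.

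The main obstacle is condition (iii): showing that $\CalE_0^{\prime\prime}(0)\colon V_0 \to Y_0^{\ast}$ is Fredholm of index zero. Differentiating $\CalE_0^{\prime}(v_0) = \CalE^{\prime}(\Phi(v_0))\circ D\Phi(v_0)$ at $v_0 = 0$ and using $D\Phi(0) = \Id_{V_0}$, $D^2\Phi(0) = P_W \circ \mathcal{G}^{\prime\prime}(\bar u)[\cdot,\cdot]$-type terms, one gets, for $\xi, \eta \in V_0$,
\begin{align*}
\CalE_0^{\prime\prime}(0)[\xi,\eta] = \CalE^{\prime\prime}(\bar u)[\xi,\eta] + \CalE^{\prime}(\bar u)\big[D^2\Phi(0)[\xi,\eta]\big].
\end{align*}
The first term is the restriction of $\CalE^{\prime\prime}(\bar u)$ to $V_0 \times V_0$, viewed as a map $V_0 \to Y_0^{\ast}$; the second is a correction. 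I would argue that the restriction of the index-zero Fredholm operator $\CalE^{\prime\prime}(\bar u)\colon V \to Y^{\ast}$ to the finite-codimension subspace $V_0$, followed by the restriction $Y^{\ast} \to Y_0^{\ast}$, is still Fredholm of index zero — this is a standard perturbation/bordering fact since $V_0$ has codimension $m$ in $V$ and $Y_0$ has "codimension $m$" in $Y$ in the relevant sense (the annihilator of $Y_0$ in $Y^{\ast}$ corresponds to the span of the extended $\overline{\mathcal{G}^{\prime}(\bar u)}$-components). The correction term coming from $D^2\Phi(0)$ should be compact: $D^2\Phi(0)$ is expressed through $\mathcal{G}^{\prime\prime}(\bar u) = \big(\overline{\mathcal{G}^{\prime}}\big)^{\prime}(\bar u)$, which is compact by hypothesis (v), and $\CalE^{\prime}(\bar u) \in Y^{\ast}$ is a fixed bounded functional, so the composition is a finite-rank-into-$\R$-then-back construction that is compact as an operator $V_0 \to Y_0^{\ast}$. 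Since Fredholmness of index zero is stable under compact perturbations, $\CalE_0^{\prime\prime}(0)$ is Fredholm of index zero, giving (iii). Applying \Cref{thm:FredholmLoja} to $\CalE_0$ then yields constants $C, \sigma_0 > 0$, $\theta \in (0, \tfrac12]$ with $\abs{\CalE_0(v_0) - \CalE_0(0)}^{1-\theta} \leq C\Norm{\CalE_0^{\prime}(v_0)}{Y_0^{\ast}}$ for $\Norm{v_0}{V_0} \leq \sigma_0$.

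Finally I would translate this back to $\mathcal{M}$. For $u \in \mathcal{M}$ with $\Norm{u - \bar u}{V}$ small, write $u = \Phi(v_0)$; then $\CalE(u) - \CalE(\bar u) = \CalE_0(v_0) - \CalE_0(0)$, and since $\Phi$ is bi-Lipschitz near $0$ (as $D\Phi(0) = \Id$), $\Norm{v_0}{V_0} \leq \sigma_0$ holds once $\Norm{u-\bar u}{V} \leq \sigma$ for $\sigma$ small enough. It remains to compare $\Norm{\CalE_0^{\prime}(v_0)}{Y_0^{\ast}}$ with $\Norm{\CalE^{\prime}(u)}{\overline{\mathcal{T}_u\mathcal{M}}^{\ast}}$. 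One has $\CalE_0^{\prime}(v_0) = \CalE^{\prime}(u) \circ D\Phi(v_0)$ and $D\Phi(v_0)\colon V_0 \to \mathcal{T}_u\mathcal{M}$ is an isomorphism extending to $Y_0 \to \overline{\mathcal{T}_u\mathcal{M}}$ with operator norm bounded by a constant uniformly for $v_0$ near $0$ (again using the $Y$-analyticity of $\overline{\Phi}$ and $D\overline{\Phi}(0) = \Id$). Hence $\Norm{\CalE_0^{\prime}(v_0)}{Y_0^{\ast}} \leq c\,\Norm{\CalE^{\prime}(u)|_{\mathcal{T}_u\mathcal{M}}}{\overline{\mathcal{T}_u\mathcal{M}}^{\ast}} \leq c\,\Norm{\CalE^{\prime}(u)}{\overline{\mathcal{T}_u\mathcal{M}}^{\ast}}$, absorbing $c$ into $C$. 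This gives \eqref{eq:LSnonlin} and completes the proof. I expect the delicate points to be the rigorous $Y$-continuous extension of the implicit-function chart $\overline{\Phi}$ and the bookkeeping for why the restricted Hessian retains index zero; both are conceptually clear but require care with the pairing between $Y_0^{\ast}$, $Y^{\ast}$, and the annihilator generated by the constraint differentials.
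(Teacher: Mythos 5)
Your overall route is the same as the paper's: parametrize $\mathcal{M}$ near $\bar{u}$ by the implicit-function chart over $V_0=\ker\mathcal{G}^{\prime}(\bar{u})$ (this is \Cref{thm:localgraph}), take $Y_0=\overline{V_0}^{\Norm{\cdot}{Y}}$ as the auxiliary space, verify the hypotheses of \Cref{thm:FredholmLoja} for the composed energy, and transfer the inequality back through the chart. Your Fredholm bookkeeping (index $-m$ from the inclusion $V_0\hookrightarrow V$ compensated by index $+m$ from the restriction $Y^{\ast}\to Y_0^{\ast}$, plus a compact correction built from hypothesis (v) and finite-rank constraint terms) is exactly the content of \Cref{thm:DF'Fredholm}, and your final comparison of $\Norm{\mathcal{E}_0^{\prime}(v_0)}{Y_0^{\ast}}$ with $\Norm{\mathcal{E}^{\prime}(u)}{\overline{\mathcal{T}_u\mathcal{M}}^{\ast}}$ is one direction of \Cref{lem:FLojaIffE_MLoja}.

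There is, however, one step that fails as stated: the claim that the implicit-function correction $v_0\mapsto w(v_0)$, and hence the chart $\Phi$, extends analytically to a neighborhood of $0$ \emph{in $Y_0$}. The constraint $\mathcal{G}$ is only defined (and only assumed analytic) on the $V$-open set $U$; hypothesis (iv) extends the derivative $\mathcal{G}^{\prime}(u)$ at points $u\in U$ to an element of $\mathcal{L}(Y,\R^m)$, but it gives no meaning to $\mathcal{G}(\bar{u}+y_0+w)$ for $y_0\in Y_0\setminus V_0$, so the fixed-point iteration cannot even be set up on a $Y_0$-neighborhood. Indeed, in the paper's own application ($\mathcal{G}(u)=\int_{\Omega}g(u)\diff x$ with $V=W^{2,p}\cap W^{1,p}_0$, $Y=L^q$, $g$ analytic but not polynomial) no such extension exists. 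The companion claim that $\mathcal{E}_0^{\prime}(v_0)$ depends analytically on $v_0\in Y_0$ has the same problem, since $\mathcal{E}^{\prime}(\Phi(v_0))$ requires $\Phi(v_0)\in U\subset V$. Fortunately neither is needed: \Cref{thm:FredholmLoja} only asks for analyticity of $v_0\mapsto \mathcal{E}_0^{\prime}(v_0)\in Y_0^{\ast}$ on a $V_0$-neighborhood, and for that it suffices to extend the \emph{differential} of the chart. By the implicit-function formula, $Dw(v_0)=-\big(\tfrac{\partial\mathcal{G}}{\partial v_1}(\Phi(v_0))\big)^{-1}\circ\mathcal{G}^{\prime}(\Phi(v_0))$, so hypothesis (iv) directly yields an extension $\overline{D\Phi(v_0)}\in\mathcal{L}(Y)$, analytic in $v_0$ on the $V_0$-neighborhood and uniformly bounded there; this is precisely \eqref{eq:psi'} and \Cref{prop:propertiesofchart}, and it is all that your verification of condition (ii), your Hessian computation, and your final norm comparison actually use. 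With that repair, your argument coincides with the paper's proof.
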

\begin{rem}
	The notation $\overline{\mathcal{G}^{\prime}(u)}$ is justified, since the operator $\overline{\mathcal{G}^{\prime}(u)}\colon Y\to \R^m$ is the closure of $A=\mathcal{G}^{\prime}(u)$ on the Banach space $Y$ with $D(A) = V$.
\end{rem}
\begin{rem}\begin{enumerate}[(i)]
		\item Note that we could apply \Cref{thm:FredholmLoja} in the situation of \Cref{thm:main} as well, but \eqref{eq:LSnonlin} yields a sharper estimate: If $Z=Y$ with $Y$ as in \Cref{thm:main}, then for $u, \bar{u}\in \mathcal{M}$ with $\Norm{u-\bar{u}}{V}\leq \sigma$, we have
		\begin{align*}
		\Norm{\mathcal{E}^{\prime}(u)}{\overline{\mathcal{T}_u\mathcal{M}}^{\ast}} &=\sup_{0\neq y\in \overline{\mathcal{T}_u\mathcal{M}}} \frac{\mathcal{E}^{\prime}(u)y}{\norm{y}{Y}} \leq  \sup_{0\neq y\in \overline{\mathcal{T}_u\mathcal{M}}} \frac{\Norm{\mathcal{E}^{\prime}(u)}{Y^{\ast}}\Norm{y}{Y}}{\norm{y}{Y}} = \Norm{\mathcal{E}^{\prime}(u)}{Y^{\ast}}.
		\end{align*}
		Thus, if the assumptions of \Cref{thm:main} are satisfied and $C, \sigma, \theta$ are as in \Cref{thm:main}, we have $\abs{\mathcal{E}(u)-\mathcal{E}(\bar{u})}^{1-\theta}\leq C\Norm{\mathcal{E}^{\prime}(u)}{Y^{\ast}}$, i.e. \eqref{eq:LSnonlin} implies \eqref{eq:LSlin} under the assumptions of \Cref{thm:main}. It hence makes sense to refer to \eqref{eq:LSnonlin} as a \emph{refined {\L}ojasiewicz--Simon gradient inequality}.
		\item From our proof, we cannot conclude that the \emph{{\L}ojasiewicz exponents} $\theta$ in \Cref{thm:FredholmLoja} and \Cref{thm:main} coincide. 
	\end{enumerate}
\end{rem}

\begin{rem}\label{rem:HilbertBad}
	The Hilbert space case treated in \Cref{cor:main hilbert} is much easier to handle than \Cref{thm:main}. It is also more natural since one usually studies $H$-gradient flows with $H = W^{k,2}(\Omega)$, $\Omega\subset \R^d$ open, $k\in \Z$. On the other hand, one may sometimes encounter a problem in proving analyticity of the energy. The problematic phenomenon is, that whenever a \emph{Nemytskii} or \emph{supercomposition operator} 
	\begin{align*}
	\mathcal{F}\colon L^p(\Omega)\to L^q(\Omega),\quad \mathcal{F}(v) = f(v)= f\circ v\text{ with } p,q \in [1,\infty)
	\end{align*}
	is analytic, the function $f$ has to be a polynomial of degree at most $\left\lceil {\frac{p}{q}}\right\rceil$, see \cite[Theorem 3.16]{Appell90}. A way to work around this, is to choose suitable Sobolev spaces, such that all derivatives in the energy either appear in polynomial expressions with appropriate powers or are continuous. This is exactly why we work in the Banach space $W^{2,p}(\Omega)$ with $p>d$ to prove the {\L}ojasiewicz--Simon gradient inequality in \Cref{subsec:isoperimetric}.
\end{rem}

This article is structured as follows. First, we recall some basic definitions and fundamental properties of analytic functions and Fredholm operators. Then we present the generalizations of basic concepts of differential geometry to submanifolds of a Banach space.
In \Cref{sec:chartgraph}, we establish a local graph representation for the manifold $\mathcal{M}$ in \Cref{thm:main}. It turns out that studying this chart plays a crucial role in the proof of \Cref{thm:main} which we complete in \Cref{sec:ProofMain}. After that, we consider the Hilbert space case in \Cref{sec:HilbertSpace} in which the inequality takes a more convenient form. {We also prove an abstract convergence result for the associated gradient flow in this case}. \Cref{sec:Optimality} is dedicated to discuss the necessity of the assumptions we make in \Cref{thm:main}.  In the last section, we will then apply our abstract results to the area of graph surfaces with an isoperimetric constraint in \Cref{subsec:isoperimetric}, the Allen--Cahn equation in \Cref{subsec:AllenCahn} and to surfaces of revolution with prescribed volume in \Cref{subsec:CMCRevolution}.
\section{Preliminaries}
\subsection{Analyticity}
\begin{defn}\label{def:analytic}
	Let $V, W$ be (real) Banach spaces, $D\subset V$ be an open set. A function $f\colon D\to W$ is called \emph{(real) analytic at $u_0\in D$} if there exist $\rho>0$ and continuous $\R$-multilinear forms $a_n\colon V^n\defeq \underbrace{V\times
		\dots \times V}_{n-\text{times}}\to W$ for all $n\in \N_0$ such that
	\begin{align}\label{eq:analytic}
	\sum_{n=0}^{\infty} \norm{a_n}{\mathcal{L}(V^n,W)} \Norm{u-u_0}{V}^{n} \text{ converges and } f(u) = \sum_{n=0}^{\infty} a_n(u-u_0)^{n} \text{ in } W
	\end{align}
	for all $\Norm{u-u_0}{V}<\rho$, where $a_n(u-u_0)^{n}\defeq a_n(u-u_0, \dots, u-u_0)\in W$. The function $f$ is \emph{(real) analytic (on $D$)} if it is analytic at every point $u_0\in D$.
\end{defn}
We denote by $\mathcal{C}^{\omega}(D;W)$ the vector space of analytic functions from $D$ to $W$.
Like in the finite-dimensional case, a composition of two analytic maps is analytic.
\begin{thm}[{\cite[p. 1079]{Whittsley65}}]\label{thm:companalytic}
	Let $V,W,X$ be Banach spaces, $D\subset V$ and $E\subset W$ be open and $f\colon D\to W$, $g\colon E\to X$ be analytic with $f(D)\subset E$. Then $g\circ f\colon D\to X$ is analytic.
\end{thm}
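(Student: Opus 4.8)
The plan is to prove this by the standard method of substituting the power series of $f$ into the power series of $g$ and dominating the resulting multiply‑indexed series by a scalar majorant; the statement is known (hence the citation), but the argument is short.

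First I would fix $u_0\in D$ and set $v_0\defeq f(u_0)\in E$. Analyticity of $g$ at $v_0$ (\Cref{def:analytic}) supplies $\rho_g>0$ and continuous multilinear forms $b_m\colon W^m\to X$ with $\sum_{m\geq 0}\norm{b_m}{\mathcal{L}(W^m,X)}r^m<\infty$ for $r<\rho_g$ and $g(v)=\sum_{m\geq 0}b_m(v-v_0)^m$ whenever $\norm{v-v_0}{W}<\rho_g$; analyticity of $f$ at $u_0$ supplies $\rho_f>0$ and continuous multilinear forms $a_n\colon V^n\to W$ with $\sum_{n\geq 0}\norm{a_n}{\mathcal{L}(V^n,W)}s^n<\infty$ for $s<\rho_f$ and $f(u)=\sum_{n\geq 0}a_n(u-u_0)^n$ whenever $\norm{u-u_0}{V}<\rho_f$. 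Evaluating the latter at $u_0$ gives $a_0=v_0$, so $h(u)\defeq f(u)-v_0=\sum_{n\geq 1}a_n(u-u_0)^n$ satisfies $\norm{h(u)}{W}\leq\phi\big(\norm{u-u_0}{V}\big)$ with $\phi(t)\defeq\sum_{n\geq 1}\norm{a_n}{\mathcal{L}(V^n,W)}t^n$ for $0\leq t<\rho_f$. Since $\phi$ is continuous and nondecreasing with $\phi(0)=0$, I would then pick $\delta\in(0,\rho_f)$ with $\phi(\delta)<\rho_g$, so that $\norm{h(u)}{W}<\rho_g$ for $\norm{u-u_0}{V}<\delta$.

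Next, writing $w\defeq u-u_0$, for $\norm{w}{V}<\delta$ I would substitute the series for $h$ into that of $g$ and expand each $b_m$ by multilinearity:
\begin{align*}
	g(f(u))=\sum_{m\geq 0}b_m\big(h(u)\big)^m=\sum_{m\geq 0}\ \sum_{n_1,\dots,n_m\geq 1}b_m\big(a_{n_1}w^{n_1},\dots,a_{n_m}w^{n_m}\big).
\end{align*}
Each inner summand is the diagonal restriction of a continuous $(n_1+\dots+n_m)$-linear form on $V$, namely the one distributing its arguments into disjoint blocks fed into $a_{n_1},\dots,a_{n_m}$ and then into $b_m$. For each $k\in\N_0$ there are only finitely many compositions $k=n_1+\dots+n_m$, so I would define $c_k\colon V^k\to X$ as the finite sum of these forms over all $(m;n_1,\dots,n_m)$ with $n_1+\dots+n_m=k$, obtaining a continuous $k$-linear form (with $c_0=b_0=g(v_0)$) and, at least formally, $g(f(u))=\sum_{k\geq 0}c_k w^k$. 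To make this rigorous and to verify the convergence demanded by \Cref{def:analytic}, I would estimate, using $\norm{b_m(a_{n_1}w^{n_1},\dots,a_{n_m}w^{n_m})}{X}\leq\norm{b_m}{\mathcal{L}(W^m,X)}\prod_{i=1}^{m}\norm{a_{n_i}}{\mathcal{L}(V^{n_i},W)}\norm{w}{V}^{n_i}$ and summing over all index tuples,
\begin{align*}
	\sum_{k\geq 0}\norm{c_k}{\mathcal{L}(V^k,X)}\norm{w}{V}^k\ \leq\ \sum_{m\geq 0}\norm{b_m}{\mathcal{L}(W^m,X)}\,\phi\big(\norm{w}{V}\big)^m\ \leq\ \sum_{m\geq 0}\norm{b_m}{\mathcal{L}(W^m,X)}\,\phi(\delta)^m<\infty
\end{align*}
for $\norm{w}{V}\leq\delta$, the last sum being finite because $\phi(\delta)<\rho_g$. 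This shows the multiply‑indexed family in the previous display is absolutely summable in $X$, hence unconditionally summable since $X$ is a Banach space, so it may be reordered and grouped at will; this legitimizes $g(f(u))=\sum_{k\geq 0}c_k w^k$ for $\norm{w}{V}<\delta$, and together with the majorant bound it exhibits $g\circ f$ as analytic at $u_0$. As $u_0\in D$ was arbitrary, $g\circ f\in\mathcal{C}^{\omega}(D,X)$.

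I expect the only delicate points to be bookkeeping rather than genuine analysis: one has to (a) ``de‑diagonalize'' each product $b_m(a_{n_1}w^{n_1},\dots,a_{n_m}w^{n_m})$ into an honest $(n_1+\dots+n_m)$-linear form so that the grouped coefficients $c_k$ fit the shape required by \Cref{def:analytic}, and (b) invoke the fact that an absolutely convergent series in a Banach space can be arbitrarily reordered and bracketed, applied to the family indexed by $(m;n_1,\dots,n_m)$. The one analytic ingredient is the scalar inequality $\phi(\delta)<\rho_g$, which converts the composed series into a convergent geometric‑type majorant; the rest is formal.
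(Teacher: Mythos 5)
Your argument is correct: it is the standard substitution-plus-majorant proof of composition of analytic maps, and the bookkeeping points you flag (de-diagonalizing $b_m(a_{n_1}w^{n_1},\dots,a_{n_m}w^{n_m})$ into an honest $k$-linear form with norm at most $\norm{b_m}{}\prod_i\norm{a_{n_i}}{}$, and regrouping an absolutely summable family in a Banach space) are exactly the ones that need care and are handled properly, with the key analytic step being $\phi(\delta)<\rho_g$. Note that the paper itself gives no proof of this statement — it is quoted from Whittlesey — so there is no argument to compare against; your write-up would serve as a self-contained replacement for the citation.
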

Easy examples of analytic maps are bounded multilinear maps.
\begin{example}\label{ex:multilinAnalytic}
	Let $\ell \in \N$ and $V_1,\dots, V_{\ell}, W$ be Banach spaces. If $a\colon V_1\times \dots\times V_{\ell}\to W$ is multilinear and continuous, then it is analytic. This follows easily since the series in \eqref{eq:analytic} consists of exactly one nonzero term and hence converges.
\end{example}
\subsection{Fredholm operators}
\begin{defn}
	Let $V, W$ be Banach spaces. An operator $T\in \mathcal{L}(V,W)$ is called a \emph{Fredholm operator} if both $\dim\ker T$ and $\codim(\Image T, W) = \dim(W/\Image T)$ are finite. The number $\ind{T} \defeq \dim\ker T - \codim(\Image T,W)$
	is called the \emph{Fredholm index of $T$}.
\end{defn}
In the following, we collect some important properties of Fredholm operators.
\begin{prop}[{\cite[XVII, Corollaries 2.6 and 2.7]{Lang12}}]\label{prop:Fredholmproperties}
	Let $T\in \mathcal{L}(V,W)$ be a Fredholm operator. Then
	\begin{enumerate}[(i)]
		\item the image $\Image T\subset W$ is closed,
		\item for any compact operator $K\colon V\to W$, the perturbed operator $T+K$ is Fredholm with $\ind{(T+K)}=\ind{T}$. This holds in particular if $K$ has finite rank.
	\end{enumerate}
\end{prop}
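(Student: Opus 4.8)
The statement is classical --- it is precisely what is cited from Lang's book --- and the plan is to derive both parts from the open mapping theorem together with Riesz--Schauder theory. I would prove \emph{(i)} first. Since $\dim\ker T<\infty$, the kernel admits a closed complement, $V=\ker T\oplus V_0$. Picking $w_1,\dots,w_k\in W$ whose classes form a basis of the finite-dimensional quotient $W/\Image T$ and setting $N\defeq\operatorname{span}\{w_1,\dots,w_k\}$, we obtain an algebraic direct sum decomposition $W=\Image T\oplus N$. Since $\Image T=T(V_0)$ and $T|_{V_0}$ is injective, the continuous linear map
\begin{align*}
\Phi\colon V_0\times N\to W,\qquad \Phi(v_0,n)\defeq Tv_0+n,
\end{align*}
is a bijection between Banach spaces, hence a topological isomorphism by the open mapping theorem; consequently $\Image T=\Phi(V_0\times\{0\})$ is closed.

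For \emph{(ii)} I would construct a parametrix. With the decompositions above, $T_0\defeq T|_{V_0}\colon V_0\to\Image T$ is an isomorphism, so defining $S\in\mathcal{L}(W,V)$ by $S\defeq T_0^{-1}$ on $\Image T$ and $S\defeq 0$ on $N$ gives $ST=\Id_V-P$ and $TS=\Id_W-Q$, where $P$ and $Q$ are the finite-rank projections of $V$ onto $\ker T$ along $V_0$, respectively of $W$ onto $N$ along $\Image T$. For compact $K\colon V\to W$ the operators $S(T+K)=\Id_V+(SK-P)$ and $(T+K)S=\Id_W+(KS-Q)$ are each the sum of the identity and a compact operator, hence Fredholm of index $0$ by Riesz--Schauder theory. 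From the inclusions $\ker(T+K)\subseteq\ker\big(S(T+K)\big)$ and $\Image\big((T+K)S\big)\subseteq\Image(T+K)$ we then read off $\dim\ker(T+K)<\infty$ and $\codim\Image(T+K)<\infty$, so $T+K$ is Fredholm.

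It remains to identify the index, and here I would argue by homotopy. The parametrix reasoning above applies verbatim with $K$ replaced by $tK$, so $T+tK$ is Fredholm for every $t\in[0,1]$, and $t\mapsto T+tK$ is a norm-continuous curve in the open set of Fredholm operators in $\mathcal{L}(V,W)$. On that set the index is locally constant: writing an operator $T'$ close to $T$ in $2\times2$ block form with respect to $V=V_0\oplus\ker T$ and $W=\Image T\oplus N$, the block $V_0\to\Image T$ stays invertible, and the Schur-complement factorization through that block exhibits $T'$ as a composition of isomorphisms with the direct sum of that block and a linear map $\ker T\to N$ between fixed finite-dimensional spaces, whose index equals $\dim\ker T-\dim N=\ind T$. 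Constancy of the index along the curve then yields $\ind(T+K)=\ind T$, and the assertion for finite-rank $K$ is the special case. I expect this last step --- the local constancy of the index --- to be the only point requiring genuine care; part (i) and the Fredholmness in (ii) are soft consequences of the open mapping theorem and Riesz--Schauder theory.
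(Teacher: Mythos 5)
Your argument is correct and complete: part (i) via the open mapping theorem applied to $\Phi\colon V_0\times N\to W$, and part (ii) via a parametrix $S$ with $ST=\Id_V-P$, $TS=\Id_W-Q$, Riesz--Schauder theory for $\Id+\text{compact}$, and local constancy of the index along the path $t\mapsto T+tK$ (established by the Schur-complement block factorization) are all standard and sound. There is nothing to compare against here, since the paper does not prove this proposition at all --- it is quoted verbatim from \cite[XVII, Corollaries 2.6 and 2.7]{Lang12} --- and your proof is essentially the textbook argument found in that source.
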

\begin{thm}[{\cite[XVII, Theorem 2.8]{Lang12}}]\label{thm:FredholmIndexFormula}
	Let $V,W$ and $X$ be Banach spaces and let $T\in \mathcal{L}(V,W)$ and $S\in \mathcal{L}(W,X)$ be Fredholm operators. Then $S\circ T\in \mathcal{L}(V, X)$ is a Fredholm operator and its index is given by $\ind{(S\circ T)}= \ind{S}+\ind{T}$.
\end{thm}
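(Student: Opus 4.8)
The plan is to reduce the refined inequality \eqref{eq:LSnonlin} to \Cref{thm:FredholmLoja} by passing to a local graph chart of $\mathcal{M}$ over its tangent space $\mathcal{T}_{\bar u}\mathcal{M}$, after modifying $\mathcal{E}$ by a Lagrange multiplier so that the chart of $\bar u$ becomes an \emph{unconstrained} critical point of the reduced energy. For the submanifold statement I would argue directly: by (vi) the operator $\mathcal{G}'(\bar u)\colon V\to\R^m$ is bounded and surjective, so $V_0\defeq\ker\mathcal{G}'(\bar u)$ is closed of codimension $m$, hence complemented; fixing a complement $V_1$ with $\dim V_1=m$ and writing $\bar u=(\bar u_0,\bar u_1)$, the partial derivative $\mathcal{G}'(\bar u)\vert_{V_1}\colon V_1\to\R^m$ is an isomorphism, and the analytic implicit function theorem yields an open set $O\subset V_0$ about $\bar u_0$ and an analytic map $h\colon O\to V_1$ with $h(\bar u_0)=\bar u_1$ such that $\mathcal{M}=\graph h$ near $\bar u$. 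This gives the asserted analytic submanifold structure of codimension $m$, with $\mathcal{T}_u\mathcal{M}=\ker\mathcal{G}'(u)$; differentiating $\mathcal{G}(v,h(v))\equiv 0$ at $\bar u_0$ and using invertibility of $\mathcal{G}'(\bar u)\vert_{V_1}$ forces $h'(\bar u_0)=0$, whence $\mathcal{T}_{\bar u}\mathcal{M}=V_0$.

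Next I would introduce the Lagrange multiplier. Since $\bar u$ is critical for $\mathcal{E}\vert_{\mathcal{M}}$, the functional $\mathcal{E}'(\bar u)$ vanishes on $\mathcal{T}_{\bar u}\mathcal{M}=\ker\mathcal{G}'(\bar u)$; together with surjectivity of $\mathcal{G}'(\bar u)$ this produces $\lambda\in\R^m$ with $\mathcal{E}'(\bar u)=\sum_{i=1}^m\lambda_i\mathcal{G}_i'(\bar u)$ in $V^{\ast}$, and since both sides lie in $Y^{\ast}$ by (ii) and (iv) and $V$ is dense in $Y$, also in $Y^{\ast}$. Put $\tilde{\mathcal{E}}\defeq\mathcal{E}-\lambda\cdot\mathcal{G}$ on $U$; then $\tilde{\mathcal{E}}\equiv\mathcal{E}$ on $\mathcal{M}$, $\tilde{\mathcal{E}}'\in\mathcal{C}^{\omega}(U;Y^{\ast})$ with $\tilde{\mathcal{E}}'(\bar u)=0$, and $\tilde{\mathcal{E}}''(\bar u)=\mathcal{E}''(\bar u)-\lambda\cdot\big(\overline{\mathcal{G}'}\big)'(\bar u)\colon V\to Y^{\ast}$ is a compact perturbation of $\mathcal{E}''(\bar u)$ by (v), hence Fredholm of index zero by (iii) and \Cref{prop:Fredholmproperties}. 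As target space I would take $Z\defeq\overline{V_0}^{\Norm{\cdot}{Y}}$: since $\overline{\mathcal{G}'(\bar u)}$ is $Y$-continuous and vanishes on $V_0$ while $\overline{\mathcal{G}'(\bar u)}\vert_{V_1}$ stays an isomorphism, one checks that $Z=\ker\overline{\mathcal{G}'(\bar u)}$ and $Y=Z\oplus V_1$, so $V_0\hookrightarrow Z$ densely, $Z$ has codimension $m$ in $Y$, and $Z=\overline{\mathcal{T}_{\bar u}\mathcal{M}}$.

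Writing $\Psi\colon O\to\mathcal{M}$, $\Psi(v)=(v,h(v))$, I would then study the reduced energy $\mathcal{F}\defeq\mathcal{E}\circ\Psi=\tilde{\mathcal{E}}\circ\Psi\colon O\to\R$, which is analytic with $\mathcal{F}'(v)=\Psi'(v)^{\ast}\tilde{\mathcal{E}}'(\Psi(v))$. Differentiating the constraint gives, for $v$ near $\bar u_0$, $h'(v)=-\big(\mathcal{G}'(\Psi(v))\vert_{V_1}\big)^{-1}\mathcal{G}'(\Psi(v))\vert_{V_0}$, which by (iv) extends to a bounded operator $\overline{h'(v)}\colon Z\to V_1$ depending analytically on $v$ (using analyticity of matrix inversion and that composition and evaluation of the relevant bounded multilinear maps are analytic, cf. \Cref{ex:multilinAnalytic}, \Cref{thm:companalytic}); hence $\Psi'(v)$ extends analytically to $\overline{\Psi'(v)}\colon Z\to Y$, $z\mapsto z+\overline{h'(v)}z$, and $\mathcal{F}'(v)=\overline{\Psi'(v)}^{\ast}\tilde{\mathcal{E}}'(\Psi(v))$ defines an analytic map $O\to Z^{\ast}$, which vanishes at $\bar u_0$ since $\tilde{\mathcal{E}}'(\bar u)=0$. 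Because additionally $h'(\bar u_0)=0$---so that $\overline{\Psi'(\bar u_0)}$ is the inclusion $Z\hookrightarrow Y$, with adjoint the restriction map $R\colon Y^{\ast}\to Z^{\ast}$---one more differentiation yields $\mathcal{F}''(\bar u_0)=R\circ\big(\tilde{\mathcal{E}}''(\bar u)\vert_{V_0}\big)\colon V_0\to Z^{\ast}$. Since the inclusion $\iota\colon V_0\hookrightarrow V$ is Fredholm of index $-m$, the composition $\tilde{\mathcal{E}}''(\bar u)\vert_{V_0}=\tilde{\mathcal{E}}''(\bar u)\circ\iota$ is Fredholm of index $-m$ by \Cref{thm:FredholmIndexFormula}, while $R$ is surjective with $\dim\ker R=m$, hence Fredholm of index $m$; so \Cref{thm:FredholmIndexFormula} gives that $\mathcal{F}''(\bar u_0)$ is Fredholm of index zero. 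Thus $\mathcal{F}$ on the Banach space $V_0$, with this $Z$, satisfies all hypotheses of \Cref{thm:FredholmLoja}, yielding $C_1,\sigma_1>0$ and $\theta\in(0,\tfrac12]$ with $\abs{\mathcal{F}(v)-\mathcal{F}(\bar u_0)}^{1-\theta}\leq C_1\Norm{\mathcal{F}'(v)}{Z^{\ast}}$ whenever $\Norm{v-\bar u_0}{V}\leq\sigma_1$.

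Finally I would translate this back to $\mathcal{M}$: for $u\in\mathcal{M}$ near $\bar u$ write $u=\Psi(v)$ with $v$ the $V_0$-component of $u$, so that $\Norm{v-\bar u_0}{V}\lesssim\Norm{u-\bar u}{V}$ and $\mathcal{F}(v)=\mathcal{E}(u)$, $\mathcal{F}(\bar u_0)=\mathcal{E}(\bar u)$. The operator $\overline{\Psi'(v)}$ maps $Z$ into $\overline{\mathcal{T}_u\mathcal{M}}$ (its image is the $Y$-closure of $\Psi'(v)V_0=\mathcal{T}_u\mathcal{M}$), in fact isomorphically, with operator norms bounded uniformly for $v$ near $\bar u_0$; and since $\overline{\mathcal{G}'(u)}$ vanishes on $\overline{\mathcal{T}_u\mathcal{M}}$, for $z\in Z$ we have $\mathcal{F}'(v)z=\big(\mathcal{E}'(u)-\lambda\cdot\overline{\mathcal{G}'(u)}\big)\big(\overline{\Psi'(v)}z\big)=\mathcal{E}'(u)\big(\overline{\Psi'(v)}z\big)$, whence $\Norm{\mathcal{F}'(v)}{Z^{\ast}}\lesssim\Norm{\mathcal{E}'(u)}{\overline{\mathcal{T}_u\mathcal{M}}^{\ast}}$. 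Combining this with the previous estimate and shrinking $\sigma$ accordingly gives \eqref{eq:LSnonlin}. I expect the crux to be the third paragraph: one must verify that passing to the graph chart and subtracting the Lagrange term produces a reduced energy whose derivative is genuinely an \emph{analytic} $Z^{\ast}$-valued map---this is exactly where assumption (iv) is used---and one must carefully bookkeep the Fredholm index, the loss of $m$ from restricting the domain to $V_0$ being precisely compensated by the gain of $m$ from restricting the target from $Y^{\ast}$ to $Z^{\ast}$, while assumption (v) is what keeps $\tilde{\mathcal{E}}''(\bar u)$ Fredholm in the first place.
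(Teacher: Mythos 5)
Your proposal does not address the statement it was supposed to prove. The statement in question is \Cref{thm:FredholmIndexFormula}, the classical composition formula for Fredholm operators: if $T\in\mathcal{L}(V,W)$ and $S\in\mathcal{L}(W,X)$ are Fredholm, then $S\circ T$ is Fredholm with $\ind(S\circ T)=\ind S+\ind T$. This is a purely operator-theoretic fact, cited in the paper from Lang's book without proof; a self-contained argument would, for instance, establish finiteness of $\dim\ker(S\circ T)$ and $\codim\Image(S\circ T)$ and then compute the index via the exact sequence
\begin{align*}
0\to\ker T\to\ker(S\circ T)\to\ker S\to W/\Image T\to X/\Image(S\circ T)\to X/\Image S\to 0
\end{align*}
and the alternating-sum-of-dimensions identity. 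Nothing of this kind appears in what you wrote.

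What you have instead written is a (reasonable-looking) outline of the proof of \Cref{thm:main}, the refined {\L}ojasiewicz--Simon inequality on the constraint manifold, via a Lagrange-multiplier modification of $\mathcal{E}$ and a graph chart over $\ker\mathcal{G}'(\bar u)$. Worse, from the standpoint of the assigned task, your argument explicitly \emph{invokes} \Cref{thm:FredholmIndexFormula} twice (to compute the index of $\tilde{\mathcal{E}}''(\bar u)\circ\iota$ and of $R\circ\tilde{\mathcal{E}}''(\bar u)\vert_{V_0}$), so as a proof of that theorem it would be circular. You need to go back and prove the index formula itself, independently of anything in Sections 3--4 of the paper.
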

\subsection{Complemented subspaces}
Projection operators and complemented subspaces play a crucial role in the proof of the {\L}ojasiewicz--Simon gradient inequality in \cite{Chill03} and they will also be important for our result, specifically when investigating the properties of the submanifold $\mathcal{M}$ in \Cref{thm:localgraph}.
\begin{defnlem}
	A closed subspace $V_0$ of a Banach space $V$ is called \emph{complemented in $V$} if there exists a projection $P\in \mathcal{L}(V)$ with $\Image P = V_0$. Equivalently, there exists a closed subspace $V_1$ of $V$ with $V = V_0\oplus V_1$, see \cite[Section 2.4]{Brezis10}.
\end{defnlem}
Whereas in a Hilbert space, every closed subspace is complemented via the orthogonal projection (cf. \cite[Chapter 5.1]{Brezis10}), this is not true for a general Banach space. In fact, if in a Banach space $V$, every closed subspace is complemented, then  it has to be isomorphic to a Hilbert space, see \cite{LindenstraussTzafriri}. Nevertheless, some subspaces are always complemented.
\begin{lem}[{\cite[XV, Corollary 1.6]{Lang12}}]\label{lem:fin_co_dim complemented}
	Let $V$ be a Banach space and $V_0\subset V$ be a closed subspace, such that $\dim V_0<\infty$ or
	$\codim(V_0,V)<\infty$. Then $V_0$ is complemented in $V$.
\end{lem}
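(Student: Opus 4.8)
The plan is to handle the two cases by separate but parallel constructions, each producing a bounded projection explicitly out of finitely many continuous functionals.

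\emph{Case $\dim V_0 < \infty$.} First I would fix a basis $e_1,\dots,e_n$ of $V_0$. Since every linear functional on a finite-dimensional normed space is continuous, the coordinate maps $e_j\mapsto\delta_{ij}$ are bounded on $V_0$, and the Hahn--Banach theorem extends them to $f_1,\dots,f_n\in V^{\ast}$ with $f_i(e_j)=\delta_{ij}$. Setting $Px\defeq\sum_{i=1}^{n}f_i(x)\,e_i$, one checks directly that $P\in\mathcal{L}(V)$, that $P^2=P$ (using $f_i(e_j)=\delta_{ij}$), and that $\Image P=V_0$ (because $Pe_k=e_k$, while $\Image P\subset\operatorname{span}\{e_1,\dots,e_n\}=V_0$). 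Thus $P$ is a bounded projection onto $V_0$, with closed complement $\ker P=\bigcap_{i=1}^{n}\ker f_i$.

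\emph{Case $\codim(V_0,V)=m<\infty$.} Here the quotient $V/V_0$ is an $m$-dimensional normed space and the canonical map $q\colon V\to V/V_0$ is bounded, so pulling back a basis of $(V/V_0)^{\ast}$ along $q$ yields linearly independent functionals $g_1,\dots,g_m\in V^{\ast}$ with $\bigcap_{i=1}^{m}\ker g_i=V_0$. Because the $g_i$ are linearly independent, the map $x\mapsto(g_1(x),\dots,g_m(x))$ from $V$ to $\R^m$ is surjective, so I may choose $v_1,\dots,v_m\in V$ with $g_j(v_i)=\delta_{ij}$. Then $Qx\defeq\sum_{i=1}^{m}g_i(x)\,v_i$ defines a $Q\in\mathcal{L}(V)$ with $Q^2=Q$, $\Image Q=\operatorname{span}\{v_1,\dots,v_m\}$ and $\ker Q=\bigcap_{i=1}^{m}\ker g_i=V_0$; hence $\Id-Q\in\mathcal{L}(V)$ is a bounded projection onto $V_0$. (Alternatively, one may take any algebraic complement $V_1$ of $V_0$---which is finite-dimensional, hence closed---and invoke the open mapping theorem on the continuous linear bijection $V_0\times V_1\to V$, $(a,b)\mapsto a+b$, to conclude that the algebraic projection along $V_1$ is bounded.)

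In both cases the linear-algebra identities ($P^2=P$, the image and kernel computations, the direct-sum check) are entirely routine; the substantive point, and the place where the hypothesis genuinely enters, is the production of a \emph{bounded} projection, which in infinite dimensions is unavailable for an arbitrary closed subspace. I expect the finite-codimension case to be marginally the more delicate of the two, since there one first has to realize $V_0$ as a finite intersection of kernels of continuous functionals (or, on the alternative route, appeal to the open mapping theorem for the boundedness of the projection) rather than simply splitting off a finite-dimensional piece by Hahn--Banach.
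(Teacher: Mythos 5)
Your proof is correct, and both cases are handled by the standard textbook construction: Hahn--Banach extension of coordinate functionals in the finite-dimensional case, and pulling back a dual basis of the (finite-dimensional, hence well-behaved) quotient in the finite-codimensional case. The paper itself offers no proof of this lemma --- it is quoted verbatim from Lang's book --- so there is nothing internal to compare against; your argument is essentially the one the cited reference gives, and all the routine verifications ($P^2=P$, $\ker Q=\bigcap_i\ker g_i=V_0$, linear independence of the $v_i$ from $g_j(v_i)=\delta_{ij}$) check out. The one small point worth making explicit in the second case is that $\bigcap_{i=1}^m\ker g_i=V_0$ uses the closedness of $V_0$ (so that $V/V_0$ is a genuine Hausdorff normed space on which the $h_i$ separate points); you use this implicitly when you say the quotient is an $m$-dimensional normed space, and it is exactly where the hypothesis that $V_0$ is closed enters.
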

\subsection{Submanifolds of Banach spaces}
This section is devoted to review some basic definitions in differential geometry in the setting of infinite-dimensional manifolds. Since we are only interested in the case of a submanifold of a Banach space $V$, the following definition based on \cite[Definition 3.2.1]{Abraham04} is sufficient for our purposes.
\begin{defn}\label{def:manifold}
	Let $V$ be a Banach space. A subset $\mathcal{M}\subset V$ is called a \emph{(splitting) submanifold of $V$ (of class $\mathcal{C}^{\ell}$)} if for all $u\in \mathcal{M}$, there exists an open neighborhood $U\subset V$ of $u$, a complemented subspace $V_0\subset V$ and a  map $\alpha\in\mathcal{C}^{\ell}(U;V)$ which is a diffeomorphism onto its image, such that $\alpha (U\cap \mathcal{M}) = \alpha(U)\cap V_0.$ 
	If $\alpha \in \mathcal{C}^{\omega}(U;V)$, we say that $\mathcal{M}$ is \emph{analytic}.
\end{defn}
\begin{example}\label{ex:graphManifold}
	If $V$ is a Banach space, $V_0\subset V$ is a complemented subspace, with $V=V_0\oplus V_1$, $\Omega_0\subset V_0$ is an open set and $\psi\in \mathcal{C}^{\ell}(\Omega_0;V)$ with $\psi(\Omega_0)\subset V_1$, then $\mathcal{M}\defeq \{ \omega+\psi(\omega)\mid \omega\in \Omega_0\}$ is a submanifold of $V$ of class $\mathcal{C}^{\ell}$.
	
	Indeed, let $\Omega\defeq \Omega_0+V_1$ and write $\Omega\ni v = \omega + v_1$ with $\omega\in \Omega_0$ and $v_1\in V_1$ and define $\alpha \colon \Omega \to V, \alpha(\omega+ v_1) = \omega+ (v_1-\psi(\omega))\in V_0\oplus V_1$. Then $\alpha$ is of class $\mathcal{C}^{\ell}$ and \begin{align*}
	\alpha^{\prime}(\omega)\begin{bmatrix}
	v_0\\ v_1
	\end{bmatrix}&= \begin{bmatrix}
	\Id_{V_0} & 0 \\
	-\psi^{\prime}(x)&  \Id_{V_1}\\
	\end{bmatrix}
	\begin{bmatrix}
	v_0\\v_1
	\end{bmatrix} \text{ for all } \omega\in \Omega_0, v_0+v_1\in V_0\oplus V_1,
	\end{align*}
	so $\alpha^{\prime}(\omega)\colon V\to V$ is an isomorphism. Since $\alpha$ is clearly bijective onto its image, we conclude that $\alpha$ is a $\mathcal{C}^{\ell}$-diffeomorphism by the Inverse Function Theorem \cite[XIV, Theorem 1.2]{Lang12}. Consequently,
	\begin{align*}
	\alpha(\Omega\cap M) = \alpha(\{  \omega+\psi(\omega)\mid \omega\in \Omega_0\}) = \{ \omega+\psi(\omega)-\psi(\omega)\mid \omega\in \Omega_0\} = \alpha(\Omega)\cap V_0,
	\end{align*}
	thus $\mathcal{M}$ is a submanifold in the sense of \Cref{def:manifold}.
\end{example}
\begin{defn}\label{def:tangentSpace}
	Let $\mathcal{M}\subset V$ be a submanifold of class $\mathcal{C}^{\ell}$ with $\ell \geq 1$. The \emph{tangent space $\mathcal{T}_u\mathcal{M}$ of $\mathcal{M}$ at $u\in \mathcal{M}$} is defined by
	\begin{align*}
	\mathcal{T}_u\mathcal{M}\defeq \left\{ \gamma^{\prime}(0)\mid \exists \varepsilon>0, \gamma\in \mathcal{C}^1\big((-\varepsilon, \varepsilon);V\big) \text{ with } \gamma(t)\in \mathcal{M}~\forall t\in (-\varepsilon, \varepsilon) \text{ and }\gamma(0)=u\right\}.
	\end{align*}
	Like in the finite-dimensional case, $\mathcal{T}_u\mathcal{M}\subset V$ is a subspace. We define the \emph{codimension of $\mathcal{M}$ in $V$} to be the codimension $\codim(\mathcal{T}_u\mathcal{M}, V)$ of $\mathcal{T}_u\mathcal{M}$ in $V$.
	The dual of the tangent space is called \emph{cotangent space} and denoted by $\mathcal{T}_u^{\ast}\mathcal{M}\defeq \left(\mathcal{T}_u\mathcal{M}\right)^{\ast}$.
\end{defn}

\begin{defnlem}\label{prop:CharaConstCritPoint}
	Let $V$ be a Banach space, $U\subset V$ be an open set, $\emptyset\neq M\subset U$ and $\mathcal{E}\in \mathcal{C}^1(U;\R)$. We say that $\bar{u}$ is a \emph{constraint critical point of $\mathcal{E}$ on $M$} or a \emph{critical point of $\mathcal{E}\vert_\mathcal{M}$}, if for any curve $\gamma \in \mathcal{C}^{1}\big((-\varepsilon, \varepsilon);V\big)$ with $\gamma(0)=\bar{u}$ and $\gamma(t)\in M$ for all $t\in (-\varepsilon, \varepsilon)$, the map $t\mapsto(\mathcal{E}\circ\gamma)(t)$ has a critical point at $t=0$.
	
	If $M=\mathcal{M}\subset V$ is a submanifold, then $\bar{u}\in \mathcal{M}$ is a constraint critical point if and only if
	\begin{align*}
	\mathcal{E}^{\prime}(\bar{u}) v = 0 \text{ for all } v\in \mathcal{T}_{\bar{u}}\mathcal{M}\subset V.
	\end{align*}
\end{defnlem}
\begin{proof}
	This follows since for each curve $\gamma\in \mathcal{C}^1((-\varepsilon, \varepsilon); V)$ with $\gamma(0)=\bar{u}$ and $\gamma(t)\in \mathcal{M}$ for all $t\in (-\varepsilon, \varepsilon)$, we have $0 = \dtzero (\mathcal{E}\circ\gamma)(t) = \mathcal{E}^{\prime}(\bar{u}) \gamma^{\prime}(0)$.
\end{proof}

\section{Local representation by a graph}\label{sec:chartgraph}
In this section, we will lay the foundations for the proof of our main theorem. We will see that the level set manifold $\mathcal{M}$ in \Cref{thm:main} admits a natural chart around $\bar{u}$ representing $\mathcal{M}$ locally as a graph. After that, we will carefully analyze the properties of this induced chart.

For the rest of the article, we assume that $V$ and $Y$ are Banach spaces with $V\hookrightarrow Y$ densely, thus we get an induced embedding $Y^{\ast}\hookrightarrow V^{\ast}$. Furthermore, we assume that $U\subset V$ is an open set, $m\in \N$ and $\mathcal{G}\colon U\to\R^m$  is analytic. We study the nodal set of $\mathcal{G}$ given by $\mathcal{M} \defeq \{u\in U\mid \mathcal{G}(u)=0\}$.
\begin{thm}\label{thm:localgraph}
	Let $\bar{u}\in \mathcal{M}$ such that $\mathcal{G}^{\prime}(\bar{u})\colon V\to\R^m$ is surjective. Then $V=V_0\oplus V_1$ with $V_0=\ker\mathcal{G}^{\prime}(\bar{u})$ for a closed subspace $V_1\subset V$. Moreover, there exist open sets $ \Omega_0\subset V_0, \Omega_1\subset V_1$ with $\bar{u}\in \Omega = \Omega_0\times\Omega_1 \subset U$ and an analytic function $\psi \colon\Omega_0\to V$ with $\psi(\Omega_0)=\Omega_1$ such that
	\begin{align*}
	\mathcal{M}\cap\Omega = \{ \omega+\psi(\omega)\mid \omega\in \Omega_0\}.
	\end{align*} Hence, locally around $\bar{u}$, $\mathcal{M}$ is an analytic submanifold of $V$.
	Moreover, with $\varphi\colon\Omega_0\to V,$ $\varphi(\omega)\defeq \omega+\psi(\omega)$ we have for any $\omega\in \Omega_0, v\in V_0$
	\begin{align}\label{eq:psi'}
	\psi^{\prime}(\omega)v  &=-\left(\frac{\partial\mathcal{G}}{\partial v_1}(\varphi(\omega))\right)^{-1}\circ\mathcal{G}^{\prime}(\varphi(\omega))v, \\
	\varphi^{\prime}(\omega)v &= v-\left(\frac{\partial\mathcal{G}}{\partial v_1}(\varphi(\omega))\right)^{-1}\circ\mathcal{G}^{\prime}(\varphi(\omega))v, \label{eq:varphi'}
	\end{align}
	where $\frac{\partial \mathcal{G}}{\partial v_1}(u)\defeq  \mathcal{G}^{\prime}(u)\vert_{V_1}\colon V_1 \to \R^m$ for $u\in U$.
\end{thm}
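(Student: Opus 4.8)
The plan is to realize $\mathcal{M}$ near $\bar u$ as the graph of a function $\psi$ over the kernel $V_0 = \ker\mathcal{G}'(\bar u)$, by a direct application of the Implicit Function Theorem in Banach spaces combined with \Cref{ex:graphManifold}. First I would produce the splitting $V = V_0\oplus V_1$: since $\mathcal{G}'(\bar u)\colon V\to\R^m$ is surjective, its kernel $V_0$ is closed of finite codimension $m$, hence complemented by \Cref{lem:fin_co_dim complemented}, which gives a closed complement $V_1$ (necessarily $\dim V_1 = m$, so $\mathcal{G}'(\bar u)|_{V_1}\colon V_1\to\R^m$ is an isomorphism). Write points of $V$ as $v = v_0 + v_1$ with $v_i\in V_i$ and denote by $\frac{\partial\mathcal{G}}{\partial v_1}(u)\in\mathcal{L}(V_1,\R^m)$ the partial derivative of $\mathcal{G}$ at $u$ in the $V_1$ direction; at $u=\bar u$ this is exactly $\mathcal{G}'(\bar u)|_{V_1}$, an isomorphism.

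Next I would apply the analytic Implicit Function Theorem (the $\mathcal{C}^\omega$ version of \cite[XIV, Theorem 2.1]{Lang12}, or directly the analytic inverse function theorem applied to the map $(v_0,v_1)\mapsto(v_0,\mathcal{G}(v_0+v_1))$) to $\mathcal{G}$ at $\bar u = \bar u_0 + \bar u_1$. Since $\mathcal{G}$ is analytic on $U$ and $\frac{\partial\mathcal{G}}{\partial v_1}(\bar u)$ is invertible, there are open neighborhoods $\Omega_0\subset V_0$ of $\bar u_0$ and $\Omega_1\subset V_1$ of $\bar u_1$ with $\Omega_0\times\Omega_1\subset U$, and a unique analytic map $\psi\colon\Omega_0\to\Omega_1$ with $\psi(\bar u_0)=\bar u_1$ such that for $(\omega,v_1)\in\Omega_0\times\Omega_1$ one has $\mathcal{G}(\omega+v_1)=0 \iff v_1=\psi(\omega)$. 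Possibly after shrinking $\Omega_0$ I may assume $\psi(\Omega_0)=\Omega_1$ (replace $\Omega_1$ by an open neighborhood of the image, or note that for the graph statement only the inclusion $\psi(\Omega_0)\subset\Omega_1$ is needed and the equality can be arranged by redefining $\Omega_1$). Setting $\Omega = \Omega_0\times\Omega_1$, this says precisely $\mathcal{M}\cap\Omega = \{\omega+\psi(\omega)\mid\omega\in\Omega_0\}$. Analyticity of $\mathcal{M}$ as a submanifold near $\bar u$ then follows from \Cref{ex:graphManifold} (with $\ell=\omega$), since $V_0$ is complemented and $\psi$ maps into the complement $V_1$.

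Finally, for the derivative formulas I would differentiate the identity $\mathcal{G}(\varphi(\omega))=\mathcal{G}(\omega+\psi(\omega))=0$, valid for all $\omega\in\Omega_0$. The chain rule gives, for $v\in V_0$,
\begin{align*}
0 = \mathcal{G}'(\varphi(\omega))\big(v+\psi'(\omega)v\big) = \mathcal{G}'(\varphi(\omega))v + \frac{\partial\mathcal{G}}{\partial v_1}(\varphi(\omega))\,\psi'(\omega)v,
\end{align*}
where I use that $\psi'(\omega)v\in V_1$ so that $\mathcal{G}'(\varphi(\omega))\psi'(\omega)v = \frac{\partial\mathcal{G}}{\partial v_1}(\varphi(\omega))\psi'(\omega)v$. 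One must first check that $\frac{\partial\mathcal{G}}{\partial v_1}(\varphi(\omega))\colon V_1\to\R^m$ remains invertible for $\omega\in\Omega_0$; this holds after shrinking $\Omega_0$, because invertibility is an open condition in $\mathcal{L}(V_1,\R^m)$ (here $V_1\cong\R^m$, so this is just non-vanishing of a determinant, which is continuous in $\omega$), and it holds at $\bar u_0$. Solving for $\psi'(\omega)v$ yields \eqref{eq:psi'}, and \eqref{eq:varphi'} follows from $\varphi'(\omega)v = v + \psi'(\omega)v$.

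The only genuinely delicate point is bookkeeping with the neighborhoods: ensuring simultaneously that $\Omega_0\times\Omega_1\subset U$, that $\psi$ is defined and analytic on all of $\Omega_0$ with $\psi(\Omega_0)=\Omega_1$, and that $\frac{\partial\mathcal{G}}{\partial v_1}(\varphi(\omega))$ stays invertible throughout $\Omega_0$ — all of which is achieved by finitely many shrinkings of $\Omega_0$. Everything else is a routine invocation of the analytic Implicit Function Theorem, \Cref{lem:fin_co_dim complemented}, \Cref{ex:graphManifold}, and the chain rule; no real obstacle beyond careful statement of the domains.
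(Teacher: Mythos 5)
Your proposal is correct and follows essentially the same route as the paper: complement $V_0=\ker\mathcal{G}'(\bar u)$ via \Cref{lem:fin_co_dim complemented}, apply the analytic Implicit Function Theorem to obtain $\psi$, invoke \Cref{ex:graphManifold} for the submanifold statement, shrink $\Omega_0$ so that $\frac{\partial\mathcal{G}}{\partial v_1}(\varphi(\omega))$ stays invertible, and differentiate $\mathcal{G}(\varphi(\omega))=0$ to get \eqref{eq:psi'} and \eqref{eq:varphi'}. Your extra care about the precise meaning of $\psi(\Omega_0)=\Omega_1$ is a reasonable refinement of a point the paper glosses over, but it does not change the argument.
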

\begin{proof}
	Since $\R^m=\Image \mathcal{G}^{\prime}(\bar{u}) \cong {V}/{V_0}$, $V_0$ has finite codimension in $V$. Moreover, since $V_0$ is closed by continuity, it is complemented by \Cref{lem:fin_co_dim complemented}, i.e. there exists $V_1\subset V$ closed with $V= V_0\oplus V_1$. As a consequence thereof, $\frac{\partial \mathcal{G}}{\partial v_1}(\bar{u}) \colon V_1 \to \R^m$ is an isomorphism of Banach spaces. Thus, by the Implicit Function Theorem \cite[Theorem 4.B]{Zeidler86}, there exist open neighborhoods $\Omega_0\subset V_0, \Omega_1 \subset V_1$ with $\Omega\defeq\Omega_0\times \Omega_1 \subset U$ and $u\in \Omega$ such that for any $\omega\in \Omega_0$, there exists exactly one $\psi(\omega)\in \Omega_1$ with $\mathcal{G}(\omega+\psi(\omega))=0$. Analyticity of $\psi$ follows since $\mathcal{G}$ is analytic.
	By \Cref{ex:graphManifold}, we may conclude that $\mathcal{M}\cap\Omega$ is an analytic submanifold of $V$.
	
	Moreover, the subset of invertible operators in $\mathcal{L}(V_1,\R^m)$ is open in the norm topology and the map $\Omega \ni u\mapsto\mathcal{G}^{\prime}(u)\vert_{V_1}\in \mathcal{L}(V_1,\R^m)$ is continuous. Hence, by continuity, we can assume that $\frac{\partial \mathcal{G}}{\partial v_1}(u)\colon V_1\to\R^m$ is an isomorphism for all $u\in \Omega$, passing to a smaller $\Omega$ if necessary.
	Therefore, \eqref{eq:varphi'} and thus \eqref{eq:psi'} follow by differentiating the equation $0=\mathcal{G}(\omega+\psi(\omega))=\mathcal{G}(\varphi(\omega))$ for $\omega\in \Omega_0$.
\end{proof}
\begin{rem}\label{rem:Mmanifold}
	\begin{enumerate}[(i)]
		\item The relation $\mathcal{M} \cap \Omega = \{ \omega+\psi(\omega) \mid \omega\in \Omega_0\}$ implies that the map $\varphi\colon\Omega_0\to \Omega\cap\mathcal{M}, \varphi(\omega)=\omega+\psi(\omega)$ defines a \emph{chart for $\mathcal{M}\cap\Omega$ centered at $\bar{u}\in \mathcal{M}$}. Resembling the finite-dimensional case, we can identify $\omega+\psi(\omega) = (\omega, \psi(\omega))$ which means that $\mathcal{M}$ is locally the graph of $\psi$ near $\bar{u}$ (cf. \Cref{ex:graphManifold}).
		\item Since we only work locally, we will abuse notation and speak about the manifold $\mathcal{M}$ instead of $\mathcal{M}\cap\Omega$ and write $\mathcal{T}_{u}\mathcal{M}$ for the tangent space $\mathcal{T}_u(\mathcal{M}\cap\Omega)$ at $u$.
	\end{enumerate}
\end{rem}
The assumptions on $\mathcal{G}$ in \Cref{thm:main} have some immediate consequences for the tangent space of $\mathcal{M}$.
\begin{prop}\label{prop:propertiesTuM}
	Suppose $\mathcal{G}\colon U\to\R^m$ and $\bar{u}\in \mathcal{M}$ satisfy assumptions (i), (iv) and (vi) in \Cref{thm:main}. Then, using the notation of \Cref{thm:localgraph}, for $\omega\in \Omega_0, \varphi(\omega)=u$ we have
	\begin{enumerate}[(i)]
		\item $\mathcal{T}_u\mathcal{M} = \ker\mathcal{G}^{\prime}(u)=\Image\varphi^{\prime}(\omega)$,
		\item $\overline{\mathcal{T}_u\mathcal{M}}\defeq\overline{\mathcal{T}_u\mathcal{M}}^{\Norm{\cdot}{Y}} = \ker\overline{\mathcal{G}^{\prime}(u)}$,
		\item $\codim(\mathcal{T}_u\mathcal{M}, V) = \codim(\overline{\mathcal{T}_u\mathcal{M}}, Y)=m$ for all $u\in \Omega$.
	\end{enumerate}
\end{prop}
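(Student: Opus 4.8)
The plan is to prove the three items of \Cref{prop:propertiesTuM} by combining the local graph representation of \Cref{thm:localgraph} with the Fredholm/compactness hypotheses on $\mathcal{G}$.

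\textbf{Item (i).} First I would recall from \Cref{thm:localgraph} that $\mathcal{M}\cap\Omega=\varphi(\Omega_0)$ with $\varphi$ an analytic chart, so every curve in $\mathcal{M}$ through $u=\varphi(\omega)$ is of the form $t\mapsto\varphi(\gamma(t))$ for a curve $\gamma$ in $\Omega_0$; differentiating gives $\mathcal{T}_u\mathcal{M}=\Image\varphi^{\prime}(\omega)$. To identify this with $\ker\mathcal{G}^{\prime}(u)$, I differentiate the defining identity $\mathcal{G}(\varphi(\omega))\equiv 0$ to get $\mathcal{G}^{\prime}(u)\circ\varphi^{\prime}(\omega)=0$, so $\Image\varphi^{\prime}(\omega)\subset\ker\mathcal{G}^{\prime}(u)$. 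For the reverse inclusion, \eqref{eq:varphi'} shows that for $v\in V$ one has $\varphi^{\prime}(\omega)v=v$ whenever $\mathcal{G}^{\prime}(u)v=0$ (the correction term vanishes), hence $\ker\mathcal{G}^{\prime}(u)\subset\Image\varphi^{\prime}(\omega)$; note also this computation shows $\varphi^{\prime}(\omega)$ restricted to $\ker\mathcal{G}^{\prime}(u)$ is the identity, so the equality is clean.

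\textbf{Item (ii).} The inclusion $\overline{\mathcal{T}_u\mathcal{M}}\subset\ker\overline{\mathcal{G}^{\prime}(u)}$ is immediate: $\mathcal{T}_u\mathcal{M}=\ker\mathcal{G}^{\prime}(u)\subset\ker\overline{\mathcal{G}^{\prime}(u)}$ and the latter is $\Norm{\cdot}{Y}$-closed because $\overline{\mathcal{G}^{\prime}(u)}\in\mathcal{L}(Y,\R^m)$ is continuous by hypothesis (iv). The content is the reverse inclusion, and this is where I expect the main obstacle. The idea is a dimension count: I claim $\overline{\mathcal{G}^{\prime}(u)}\colon Y\to\R^m$ is still surjective, and $\codim(\ker\overline{\mathcal{G}^{\prime}(u)},Y)=m$, while $\codim(\overline{\mathcal{T}_u\mathcal{M}},Y)\le m$ as well (since $\overline{\mathcal{T}_u\mathcal{M}}$ already has a complement of dimension $\le m$ coming from a complement of $\mathcal{T}_u\mathcal{M}$ in $V$, which is dense in $Y$); once one knows the containment $\overline{\mathcal{T}_u\mathcal{M}}\subset\ker\overline{\mathcal{G}^{\prime}(u)}$ together with both codimensions equal to $m$, equality follows. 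So the real task reduces to showing surjectivity of $\overline{\mathcal{G}^{\prime}(u)}$ and computing these codimensions, which is exactly item (iii).

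\textbf{Item (iii).} Here I would use hypotheses (v) and (vi) together with the stability of Fredholm index under compact perturbation, \Cref{prop:Fredholmproperties}(ii). By (vi), $\mathcal{G}^{\prime}(\bar{u})\colon V\to\R^m$ is surjective with kernel of codimension $m$, so $\codim(\mathcal{T}_{\bar{u}}\mathcal{M},V)=m$. For general $u=\varphi(\omega)\in\Omega$, the map $\frac{\partial\mathcal{G}}{\partial v_1}(u)\colon V_1\to\R^m$ is an isomorphism (shrinking $\Omega$ as in \Cref{thm:localgraph}), so $\mathcal{G}^{\prime}(u)$ is surjective and $V=\ker\mathcal{G}^{\prime}(u)\oplus V_1$, giving $\codim(\mathcal{T}_u\mathcal{M},V)=\dim V_1=m$. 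For the $Y$-statement: by (v), $\big(\overline{\mathcal{G}^{\prime}}\big)^{\prime}(\bar{u})$ is compact, so the analytic map $u\mapsto\overline{\mathcal{G}^{\prime}(u)}\in\mathcal{L}(Y,\R^m)$ has the property that $\overline{\mathcal{G}^{\prime}(u)}-\overline{\mathcal{G}^{\prime}(\bar{u})}$ is small in operator norm for $u$ near $\bar{u}$; combined with $\overline{\mathcal{G}^{\prime}(\bar{u})}$ being surjective of index $m$ as an operator $Y\to\R^m$ (finite rank target, so automatically Fredholm with $\codim(\ker,\cdot)=m$), I get by openness of surjective operators that $\overline{\mathcal{G}^{\prime}(u)}$ stays surjective with kernel of codimension $m$ for all $u$ in a possibly smaller $\Omega$. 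That $\overline{\mathcal{G}^{\prime}(\bar{u})}$ is surjective onto $\R^m$ follows because its restriction to the dense subspace $V$ already surjects (it equals $\mathcal{G}^{\prime}(\bar{u})$ there). This establishes item (iii), and feeding it back closes item (ii). The one point to be careful about is the interplay of the two norms: a subspace dense-and-finite-codimensional in $V$ need not a priori have the "same" complement in $Y$, so I must argue the codimension in $Y$ directly via the operator $\overline{\mathcal{G}^{\prime}(u)}$ rather than by naively closing up a complement from $V$ — the finite-rank target makes this harmless, but it is the step deserving the most care.
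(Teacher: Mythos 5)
Your plan is sound, and for part (ii) it genuinely differs from the paper. The paper proves $\ker\overline{\mathcal{G}^{\prime}(u)}\subset\overline{\mathcal{T}_u\mathcal{M}}$ by hand: it splits $V=\ker\mathcal{G}^{\prime}(u)\oplus W$ with $\dim W=m$ (\Cref{lem:fin_co_dim complemented}), approximates $y\in\ker\overline{\mathcal{G}^{\prime}(u)}$ by $v_n\in V$, and uses that $\mathcal{G}^{\prime}(u)\vert_{W}\colon W\to\R^m$ is an isomorphism to see that the $W$-components of $v_n$ tend to $0$ in $Y$, so $y$ is a $Y$-limit of elements of $\mathcal{T}_u\mathcal{M}$. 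You instead compare codimensions of the nested closed subspaces $\overline{\mathcal{T}_u\mathcal{M}}\subset\ker\overline{\mathcal{G}^{\prime}(u)}$: the larger has codimension exactly $m$ in $Y$ by surjectivity of $\overline{\mathcal{G}^{\prime}(u)}$, the smaller has codimension at most $m$, and equality of nested subspaces follows. This is valid (it is the same kind of argument the paper uses in its appendix), but be aware that the bound $\codim(\overline{\mathcal{T}_u\mathcal{M}},Y)\leq m$ can only come from the ``complement from $V$'' step you half-disavow in your closing caveat: with $W$ as above, $\overline{\mathcal{T}_u\mathcal{M}}+W$ is closed in $Y$ (closed plus finite-dimensional) and contains the dense subspace $V$, hence equals $Y$; the operator $\overline{\mathcal{G}^{\prime}(u)}$ alone controls its own kernel and says nothing about $\overline{\mathcal{T}_u\mathcal{M}}$. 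Keep that step — it is sound even though $W$ need not remain a direct complement in $Y$. The paper's approximation argument is more concrete; yours is shorter once the two codimension facts are available.

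Three repairs are needed. First, the proposition assumes only (i), (iv) and (vi) of \Cref{thm:main}, so you may not invoke (v); fortunately you do not need it: compactness of $\big(\overline{\mathcal{G}^{\prime}}\big)^{\prime}(\bar{u})$ does not yield norm-smallness of $\overline{\mathcal{G}^{\prime}(u)}-\overline{\mathcal{G}^{\prime}(\bar{u})}$ (that follows from the analyticity in (iv)), and the whole perturbation/openness detour is superfluous, since for every $u\in\Omega$ (after the shrinking already performed in \Cref{thm:localgraph}) $\tfrac{\partial\mathcal{G}}{\partial v_1}(u)$ is an isomorphism, so $\mathcal{G}^{\prime}(u)$ is surjective and hence so is its extension $\overline{\mathcal{G}^{\prime}(u)}$; also $\overline{\mathcal{G}^{\prime}(\bar{u})}\colon Y\to\R^m$ is not Fredholm (its kernel is infinite-dimensional) — what you actually use is only that surjectivity forces $\codim(\ker\overline{\mathcal{G}^{\prime}(u)},Y)=m$. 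Second, in (i) the operator $\varphi^{\prime}(\omega)$ is defined on $V_0$, not on $V$, so ``$\varphi^{\prime}(\omega)v=v$ for $v\in\ker\mathcal{G}^{\prime}(u)$'' is not literally meaningful when $u\neq\bar{u}$; argue as the paper does: write $v=v_0+v_1\in V_0\oplus V_1$, deduce $v_1=\psi^{\prime}(\omega)v_0$ from \eqref{eq:psi'}, hence $v=\varphi^{\prime}(\omega)v_0\in\Image\varphi^{\prime}(\omega)$ (equivalently, the right-hand side of \eqref{eq:varphi'} evaluated on all of $V$ equals $\varphi^{\prime}(\omega)$ composed with the projection onto $V_0$, so your ``correction term vanishes'' observation does the job once this is said). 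Third, the inclusion $\mathcal{T}_u\mathcal{M}\subset\Image\varphi^{\prime}(\omega)$ via ``every curve is $\varphi\circ\gamma$'' should note that $\gamma$ is the image of the curve under the bounded projection $V\to V_0$ along $V_1$, hence $\mathcal{C}^1$. With these adjustments the proof is complete.
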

\begin{proof}
	\begin{enumerate}[(i)]
		\item We first prove the inclusion $\mathcal{T}_u\mathcal{M}\subset\ker\mathcal{G}^{\prime}(u)$. Let $v\in \mathcal{T}_u\mathcal{M}$. Then, there exist $\varepsilon>0$ and $\gamma \in\mathcal{C}^1((-\varepsilon, \varepsilon), V)$ with $\Image\gamma \subset \mathcal{M}\cap\Omega, \gamma(0)=u$ and $\gamma^{\prime}(0)=v$. Now, $\mathcal{G}^{\prime}(u) v = \dtzero \mathcal{G}(\gamma(t)) =0$, since $\mathcal{M} = \{ u\in U\mid \mathcal{G}(u)=0\}$.
		
		For $\ker\mathcal{G}^{\prime}(u)\subset \Image\varphi^{\prime}(\omega)$, let $v\in V$ with $\mathcal{G}^{\prime}(u)v = 0$ and write $v=v_0+v_1\in V_0\oplus V_1$. Then 
		$0 = \frac{\partial\mathcal{G}}{\partial v_0}(u) v_0 + \frac{\partial\mathcal{G}}{\partial v_1}(u)v_1$. With $\varphi, \psi$ as in \Cref{thm:localgraph} and writing $u = \varphi(\omega)$ we conclude using \eqref{eq:psi'}
		\begin{align*}
		v_1 = - \left(\frac{\partial\mathcal{G}}{\partial v_1}(u)\right)^{-1} \frac{\partial \mathcal{G}}{\partial v_0}(u) v_0 = \psi^{\prime}(\omega) v_0.
		\end{align*}
		Consequently, $\varphi^{\prime}(\omega)v_0 = v_0 + \psi^{\prime}(\omega)v_0  =v_0+v_1=v$, thus $v\in \Image\varphi^{\prime}(\omega)$.
		
		To prove $\Image \varphi^{\prime}(\omega)\subset \mathcal{T}_u\mathcal{M}$, let $\omega\in \Omega_0$ with $\varphi(\omega)=u$ and let $y\in \Image \varphi^{\prime}(\omega)$. Then there exists $v\in V_0$ with $y=\varphi^{\prime}(\omega)v$, hence $y=\varphi^{\prime}(\omega)v = \dtzero \varphi(\omega+tv) \in \mathcal{T}_u\mathcal{M}$ since $\gamma(t)\defeq \varphi(\omega+tv)$ defines a curve in $\mathcal{M}$ with $\gamma(0)=u$ and $\gamma^{\prime}(0)=\varphi^{\prime}(\omega)v$.

		\item First, let $y\in \overline{\mathcal{T}_u\mathcal{M}}$ and $v_n\in \mathcal{T}_u\mathcal{M}$ with $v_n\to y$ in $Y$. Using the extension property (iv) in \Cref{thm:main}, we get $\overline{\mathcal{G}^{\prime}(u)}y = \lim_{n\to\infty} \mathcal{G}^{\prime}(u) v_n = 0$ by (i).
		
		Conversely, let $y\in Y$ such that $\overline{\mathcal{G}^{\prime}(u)} y=0$. Since $\frac{\partial \mathcal{G}}{\partial v_1}(u)\colon V_1\to \R^m$ is an isomorphism by the proof of \Cref{thm:localgraph}, we conclude that $\mathcal{G}^{\prime}(u)\colon V\to\R^m$ is surjective. As a consequence, $\R^m\cong {V}/{\ker\mathcal{G}^{\prime}(u)}$, so $\codim\ker\mathcal{G}^{\prime}(u)=m$ is finite. By \Cref{lem:fin_co_dim complemented}, there exists a closed subspace $W=W(u)$ of $V$ with $V=\ker\mathcal{G}^{\prime}(u)\oplus W$. By density of $V$ in $Y$, there exists a sequence $(v_n)\subset V$ with $v_n\to y$ in $Y$. Thus, we may write $v_n = v_n^0 + w_n$ with $v_n^0\in \ker\mathcal{G}^{\prime}(u)$ and $w_n\in W$. As a consequence, $\mathcal{G}^{\prime}(u)v_n = \mathcal{G}^{\prime}(u)w_n \to \overline{\mathcal{G}^{\prime}(u)}y = 0$ in $\R^m$. Since $\mathcal{G}^{\prime}(u)\vert_{W}\colon W\to\R^m$ is an isomorphism, we get $w_n\to 0$ in $W\subset V$, hence in $Y$. Thus, $y = \lim_{n\to\infty} v_n = \lim_{n\to\infty} v_n^0$ with $v_n^0\in \ker\mathcal{G}^{\prime}(u) = \mathcal{T}_u\mathcal{M}$ by (i).
		
		\item First, since $\frac{\partial\mathcal{G}}{\partial v_1}(u)\colon V_1\to\R^m$ is an isomorphism by the proof of \Cref{thm:localgraph}, the operator $\mathcal{G}^{\prime}(u)\colon V\to\R^m $ is surjective. Thus $\R^m \cong {V}/{\ker\mathcal{G}^{\prime}(u)}={V}/{\mathcal{T}_u\mathcal{M}}$ by (i), so $\codim(\mathcal{T}_u\mathcal{M},V)=m$.  Moreover, since $\mathcal{G}^{\prime}(u)\colon V\to\R^m$ is surjective, so is the extension $\overline{\mathcal{G}^{\prime}(u)}\colon Y\to\R^m$ and hence $\R^m \cong{Y}/{\ker\overline{\mathcal{G}^{\prime}(u)}} = {Y}/{\overline{\mathcal{T}_u\mathcal{M}}}$ by (ii), so $\codim(\overline{\mathcal{T}_u\mathcal{M}},Y)=m$.\qedhere
	\end{enumerate}
\end{proof}
\begin{rem}\label{rem:projTbaruM}
	In particular, \Cref{prop:propertiesTuM} and \Cref{lem:fin_co_dim complemented} imply that there exists a projection $P(\bar{u})\colon Y\to Y$ onto $\overline{\mathcal{T}_{\bar{u}}\mathcal{M}} =\overline{\ker\mathcal{G}^{\prime}(u)} \eqdef \overline{V_0}$. 
\end{rem}
As a next step, we investigate the properties of the chart $\varphi$ defined in \Cref{thm:localgraph} under the assumptions on $\mathcal{G}$ in \Cref{thm:main}.
\begin{prop}\label{prop:propertiesofchart}
	Suppose $\mathcal{G}$ satisfies assumptions (iv) and (vi) in \Cref{thm:main}. Using the notation of \Cref{thm:localgraph}, we have
	\begin{enumerate}[(i)]
		\item for all $\omega\in \Omega_0$, the operator $\psi^{\prime}(\omega)\colon V_0\to V$ defined in \eqref{eq:psi'} extends to an operator $ \overline{\psi^{\prime}(\omega)}\in \mathcal{L}(Y)$ such that $\sup_{\omega\in \Omega_0}\norm{\overline{\psi^{\prime}(\omega)}}{\mathcal{L}(Y)} <\infty$ and $\Image \overline{\psi^{\prime}(\omega)}\subset V_1$ is finite-dimensional, replacing $\Omega_0$ with a smaller neighborhood if necessary,
		\item for any $\omega\in \Omega_0$, the operator $\varphi^{\prime}(\omega)$ extends to $\overline{\varphi^{\prime}(\omega)} = \Id_{Y}+\overline{\psi^{\prime}(\omega)}\colon Y\to Y,$
		\item the map $\overline{\varphi^{\prime}} \colon \Omega_0\to \mathcal{L}(Y),\omega \mapsto\overline{\varphi^{\prime}(\omega)}$ is analytic,
		\item $\overline{\varphi^{\prime}(\bar{\omega})}y = y$ for all $y\in \overline{V_0}$, where $\bar{\omega}\in \Omega_0$ satisfies $\varphi(\bar{\omega})=\bar{u}$.
		\item For all $\omega\in \Omega_0$ and $y\in \overline{V_0}$, we have $\norm{\overline{\varphi^{\prime}(\omega)} y}{Y}\geq \frac{1}{2} \norm{y}{Y}$, passing to a smaller neighborhood $\Omega_0$ if necessary.
	\end{enumerate}
\end{prop}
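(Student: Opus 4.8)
The plan is to derive everything from the explicit formula~\eqref{eq:psi'}, namely
$\psi^{\prime}(\omega)v = -\bigl(\tfrac{\partial\mathcal{G}}{\partial v_1}(\varphi(\omega))\bigr)^{-1}\circ\mathcal{G}^{\prime}(\varphi(\omega))v$.
The point is that $\mathcal{G}^{\prime}(\varphi(\omega))$ extends to $\overline{\mathcal{G}^{\prime}(\varphi(\omega))}\in\mathcal{L}(Y,\R^m)$ by assumption~(iv), and $\bigl(\tfrac{\partial\mathcal{G}}{\partial v_1}(\varphi(\omega))\bigr)^{-1}\in\mathcal{L}(\R^m,V_1)$ is just a linear map on the finite-dimensional space $\R^m$. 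So for~(i) I would simply \emph{define} $\overline{\psi^{\prime}(\omega)}\defeq -\bigl(\tfrac{\partial\mathcal{G}}{\partial v_1}(\varphi(\omega))\bigr)^{-1}\circ\overline{\mathcal{G}^{\prime}(\varphi(\omega))}\colon Y\to V_1\subset Y$. This is bounded as a composition of bounded maps, its image lies in $V_1$ and has dimension at most $m$ (being the image of an $\R^m$-valued map composed with a map out of $\R^m$), and it agrees with $\psi^{\prime}(\omega)$ on $V_0\subset V$, hence is \emph{the} continuous extension to $\overline{V_0}$ — and since $V$ is dense in $Y$, on all of $Y$; note $\overline{\psi^{\prime}(\omega)}$ is really defined on all of $Y$, restricting trivially to the relevant subspace. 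For the uniform bound, $\mathcal{G}^{\prime}(\bar u)\vert_{V_1}$ is an isomorphism, the invertible operators in $\mathcal{L}(V_1,\R^m)$ are open, and $\omega\mapsto\overline{\mathcal{G}^{\prime}(\varphi(\omega))}$ is continuous (indeed analytic, see below); shrinking $\Omega_0$ to a set with compact closure inside the region where invertibility holds makes $\omega\mapsto\norm{\overline{\mathcal{G}^{\prime}(\varphi(\omega))}}{\mathcal{L}(Y,\R^m)}$ and $\omega\mapsto\norm{(\tfrac{\partial\mathcal{G}}{\partial v_1}(\varphi(\omega)))^{-1}}{}$ bounded, giving $\sup_{\omega\in\Omega_0}\norm{\overline{\psi^{\prime}(\omega)}}{\mathcal{L}(Y)}<\infty$.

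Part~(ii) is then immediate: $\varphi(\omega)=\omega+\psi(\omega)$ gives $\varphi^{\prime}(\omega)=\Id_{V}+\psi^{\prime}(\omega)$ on $V$, so $\overline{\varphi^{\prime}(\omega)}\defeq\Id_Y+\overline{\psi^{\prime}(\omega)}\in\mathcal{L}(Y)$ is the continuous extension. For~(iii), analyticity of $\overline{\varphi^{\prime}}$ reduces to analyticity of $\overline{\psi^{\prime}}\colon\Omega_0\to\mathcal{L}(Y)$, which I would obtain by writing $\overline{\psi^{\prime}(\omega)}$ as the composition of: the analytic chart map $\varphi\colon\Omega_0\to U$ (analytic by \Cref{thm:localgraph}); the analytic map $u\mapsto\overline{\mathcal{G}^{\prime}(u)}$ from assumption~(iv); the map $u\mapsto\tfrac{\partial\mathcal{G}}{\partial v_1}(u)=\overline{\mathcal{G}^{\prime}(u)}\vert_{V_1}$, which is analytic into $\mathcal{L}(V_1,\R^m)$ (restriction is linear and bounded, hence analytic by \Cref{ex:multilinAnalytic}, and compose with $\overline{\mathcal{G}^{\prime}}$ via \Cref{thm:companalytic}); the inversion map $\mathrm{Inv}\colon\mathrm{GL}(V_1,\R^m)\to\mathcal{L}(\R^m,V_1)$, which is analytic on the open set of invertibles (standard Neumann-series argument, valid in any Banach algebra); and finally the composition-of-operators bilinear map $\mathcal{L}(\R^m,V_1)\times\mathcal{L}(Y,\R^m)\to\mathcal{L}(Y)$, which is bounded bilinear, hence analytic by \Cref{ex:multilinAnalytic}. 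Chaining these via \Cref{thm:companalytic} gives~(iii).

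Part~(iv): by \Cref{prop:propertiesTuM}(ii) (applied at $u=\bar u$, whose hypotheses~(i),(iv),(vi) hold here) we have $\overline{V_0}=\overline{\mathcal{T}_{\bar u}\mathcal{M}}=\ker\overline{\mathcal{G}^{\prime}(\bar u)}$, so for $y\in\overline{V_0}$ one gets $\overline{\psi^{\prime}(\bar\omega)}y=-(\cdots)^{-1}\overline{\mathcal{G}^{\prime}(\bar u)}y=0$, hence $\overline{\varphi^{\prime}(\bar\omega)}y=y$. Finally~(v) is a continuity/perturbation argument off of~(iv): for $y\in\overline{V_0}$ with $\norm{y}{Y}=1$,
$\norm{\overline{\varphi^{\prime}(\omega)}y-y}{Y}=\norm{\overline{\varphi^{\prime}(\omega)}y-\overline{\varphi^{\prime}(\bar\omega)}y}{Y}\leq\norm{\overline{\varphi^{\prime}(\omega)}-\overline{\varphi^{\prime}(\bar\omega)}}{\mathcal{L}(Y)}$,
which by the continuity established in~(iii) is $<\tfrac12$ once $\Omega_0$ is shrunk around $\bar\omega$; then $\norm{\overline{\varphi^{\prime}(\omega)}y}{Y}\geq\norm{y}{Y}-\tfrac12\norm{y}{Y}=\tfrac12\norm{y}{Y}$, and by homogeneity this holds for all $y\in\overline{V_0}$.

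The main obstacle I anticipate is bookkeeping rather than conceptual: making sure the extension $\overline{\psi^{\prime}(\omega)}$ is genuinely well-defined and unique (density of $V$ in $Y$ handles uniqueness; the explicit formula handles existence and the finite-rank claim), and being careful that all the maps in the composition chain for~(iii) are analytic \emph{into the right target Banach spaces} — in particular that operator inversion $\mathrm{Inv}$ is analytic, for which I would cite or reprove the Neumann-series expansion $(\Id-A)^{-1}=\sum_{n\geq0}A^n$ combined with \Cref{thm:companalytic}. One small point to watch: after each "shrink $\Omega_0$" one must re-verify that $\bar\omega\in\Omega_0$ and that the earlier conclusions persist on the smaller set, which they do since all the relevant properties are local and stable under restriction.
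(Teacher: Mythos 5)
Your proposal is correct and follows essentially the same route as the paper: extend $\psi^{\prime}(\omega)$ via the explicit formula with $\overline{\mathcal{G}^{\prime}(\varphi(\omega))}$ in place of $\mathcal{G}^{\prime}(\varphi(\omega))$, deduce (ii) from \eqref{eq:varphi'}, obtain (iii) by chaining analytic maps (your explicit treatment of operator inversion via Neumann series is a welcome detail the paper leaves implicit), use $\overline{V_0}=\ker\overline{\mathcal{G}^{\prime}(\bar{u})}$ from \Cref{prop:propertiesTuM} for (iv), and the $\tfrac12$-perturbation argument for (v). One correction to your justification of the uniform bound in (i): you cannot shrink $\Omega_0$ to a set with compact closure, since $\Omega_0$ is open in the (in general infinite-dimensional) space $V_0$ and hence has no relatively compact neighborhoods; the bound instead follows, as in the paper, from the fact that the analytic (hence continuous, hence locally bounded) maps $\omega\mapsto\overline{\mathcal{G}^{\prime}(\varphi(\omega))}\in\mathcal{L}(Y,\R^m)$ and $\omega\mapsto\bigl(\tfrac{\partial\mathcal{G}}{\partial v_1}(\varphi(\omega))\bigr)^{-1}\in\mathcal{L}(\R^m,V_1)$ are bounded on a sufficiently small neighborhood of $\bar{\omega}$. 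Also, your parenthetical claim that density of $V$ in $Y$ makes the extension unique on all of $Y$ is not right (only uniqueness on $\overline{V_0}$ follows, as $\psi^{\prime}(\omega)$ is defined only on $V_0$), but uniqueness is neither claimed nor needed.
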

\begin{proof}
	\begin{enumerate}[(i)]
		\item By assumption (iv) in \Cref{thm:main}, the operator $\mathcal{G}^{\prime}(\varphi(\omega))$ extends to an operator $\overline{\mathcal{G}^{\prime}(\varphi(\omega))}\colon Y \to \R^m$ and hence $\psi^{\prime}(\omega)$ extends to an operator $\overline{\psi^{\prime}(\omega)}\colon Y\to Y$ via \eqref{eq:psi'}. Since the image of $\big(\frac{\partial\mathcal{G}}{\partial v_1}(\varphi(\omega))\big)^{-1}\colon \R^m \to V_1$ is contained in $V_1$, we conclude that $\Image \overline{\psi^{\prime}(\omega)}\subset V_1\subset Y$ is finite-dimensional. For the norm estimate note that for any $y\in Y$, using $V\hookrightarrow Y$, we have
		\begin{align*}
		\Norm{\overline{\psi^{\prime}(\omega)}y}{Y} &\leq C \Norm{\big(\frac{\partial\mathcal{G}}{\partial v_1}(\varphi(\omega))\big)^{-1}}{\mathcal{L}(\R^m, V_1)} \Norm{\overline{\mathcal{G}^{\prime}(\varphi(\omega))}}{\mathcal{L}(Y,\R^m)}\Norm{y}{Y}\leq CC^{\prime} \Norm{y}{Y}
		\end{align*}
		for some $C, C^{\prime} >0$, passing to a smaller $\Omega_0$ if necessary, since $\varphi$ and $\mathcal{G}$ are analytic and so is the extension $\overline{\mathcal{G}^{\prime}}\colon U\to \mathcal{L}(Y,\R^m)$ by assumption (iv) in \Cref{thm:main}.
		\item This follows from (i) and \eqref{eq:varphi'}.
		\item The map $\Omega_0\ni \omega\mapsto\overline{\mathcal{G}^{\prime}(\varphi(\omega))}\in \mathcal{L}(Y,\R^m)$ is analytic using assumption (iv) in \Cref{thm:main} and the analyticity of $\varphi$. Therefore, using \eqref{eq:varphi'}, so is the extension $\Omega_0 \ni \omega\mapsto\overline{\varphi^{\prime}(\omega)}\in \mathcal{L}(Y)$ using \Cref{thm:companalytic} and \Cref{ex:multilinAnalytic}.
		\item By \Cref{prop:propertiesTuM} (ii) and \eqref{eq:psi'},  $\overline{\psi^{\prime}(\omega)}y=0$ for all $y\in \overline{V_0}$. This yields the claim.
		\item By (iv), $\norm{\overline{\varphi^{\prime}(\bar{\omega})}y}{Y} = y$ for all $y\in \overline{V_0}$. By (iii), passing to a smaller $\Omega_0$ if necessary, we can assume $\norm{\overline{\varphi^{\prime}({\bar\omega})}-\overline{\varphi^{\prime}({\omega})}}{\mathcal{L}(Y)} \leq \frac{1}{2}$ for all $w\in \Omega_0$. Then, for any $y\in \overline{V_0}$ we can estimate 
		$\norm{\overline{\varphi^{\prime}({\omega})} y}{Y} \geq \norm{\overline{\varphi^{\prime}(\bar{\omega})}y}{Y} - \norm{\overline{\varphi^{\prime}({\bar\omega})}y-\overline{\varphi^{\prime}({\omega})}y}{Y}\geq \left(1-\frac{1}{2}\right)\norm{y}{Y}$.
		\qedhere
	\end{enumerate}
\end{proof}

\section{Proof of the \texorpdfstring{{\L}ojasiewicz}{Lojasiewicz}--Simon gradient inequality}\label{sec:ProofMain}
In this section, we will establish the {\L}ojasiewicz--Simon gradient inequality for the energy $\mathcal{E}$ composed with the chart we constructed in \Cref{sec:chartgraph} and use this to prove our main theorem. 
\begin{thm}\label{thm:propertiesofF}
	Suppose $\mathcal{E}$, $\mathcal{G}$ and $\bar{u}\in U$ satisfy assumptions (i), (ii), (iv) and (vi) of \Cref{thm:main}. Let $\varphi$ be the chart centered at $\bar{u}$ defined in \Cref{thm:localgraph}. Define $\mathcal{F}\colon \Omega_0\to\R$, $\mathcal{F}(\omega)\defeq (\mathcal{E}\circ\varphi)(\omega)$. Then 
	\begin{enumerate}[(i)]
		\item $\mathcal{F}$ is analytic,
		\item for $\omega\in \Omega_0$, $\mathcal{F}^{\prime}(\omega)\in \overline{V_0}^{\ast}$ via $\mathcal{F}^{\prime}(\omega) = P(\bar{u})^{\ast}\circ \overline{\varphi^{\prime}(\omega)}^{\ast}\mathcal{E}^{\prime}(\varphi(\omega))$, where $P(\bar{u})\in \mathcal{L}(Y)$ is the projection onto $\overline{V_0} = \overline{\mathcal{T}_{\bar{u}}\mathcal{M}}$ from \Cref{rem:projTbaruM},
		\item the map $\Omega_0\ni \omega\mapsto \mathcal{F}^{\prime}(\omega)\in \overline{V_0}^{\ast}$ is analytic.
	\end{enumerate}
\end{thm}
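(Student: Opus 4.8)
The plan is to obtain all three assertions as consequences of composition rules for analytic maps together with the structural properties of the chart $\varphi$ already established in \Cref{prop:propertiesofchart}. For (i), observe that $\mathcal{F} = \mathcal{E}\circ\varphi$ with $\varphi = \Id_{V_0}+\psi\colon\Omega_0\to U\subset V$; since $\psi$ is analytic by \Cref{thm:localgraph} and $\mathcal{E}\in\mathcal{C}^{\omega}(U;\R)$ by assumption (ii), analyticity of $\mathcal{F}$ follows immediately from \Cref{thm:companalytic} (the inclusion $\Omega_0\hookrightarrow V_0\hookrightarrow V$ being linear and bounded, hence analytic by \Cref{ex:multilinAnalytic}).

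For (ii), the chain rule gives $\mathcal{F}^{\prime}(\omega)v = \mathcal{E}^{\prime}(\varphi(\omega))\,\varphi^{\prime}(\omega)v = \big(\varphi^{\prime}(\omega)^{\ast}\mathcal{E}^{\prime}(\varphi(\omega))\big)v$ for $v\in V_0$, a priori as an element of $V_0^{\ast}$. The key point is that this functional in fact extends continuously to $\overline{V_0} = \overline{\mathcal{T}_{\bar u}\mathcal{M}}^{\Norm{\cdot}{Y}}\subset Y$: by assumption (ii), $\mathcal{E}^{\prime}(\varphi(\omega))\in Y^{\ast}$, and by \Cref{prop:propertiesofchart}(ii) the operator $\varphi^{\prime}(\omega)$ extends to $\overline{\varphi^{\prime}(\omega)}\in\mathcal{L}(Y)$, so $\overline{\varphi^{\prime}(\omega)}^{\ast}\mathcal{E}^{\prime}(\varphi(\omega))\in Y^{\ast}$ restricts to an element of $\overline{V_0}^{\ast}$ agreeing with $\mathcal{F}^{\prime}(\omega)$ on the dense subspace $V_0$. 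To rewrite this in the stated form one inserts the projection $P(\bar u)\in\mathcal{L}(Y)$ onto $\overline{V_0}$ from \Cref{rem:projTbaruM}: since $P(\bar u)y = y$ for $y\in\overline{V_0}$, the functionals $\overline{\varphi^{\prime}(\omega)}^{\ast}\mathcal{E}^{\prime}(\varphi(\omega))$ and $P(\bar u)^{\ast}\circ\overline{\varphi^{\prime}(\omega)}^{\ast}\mathcal{E}^{\prime}(\varphi(\omega))$ coincide on $\overline{V_0}$, which is all that is claimed. (The role of $P(\bar u)^{\ast}$ is to produce a functional defined on all of $Y$ whose restriction to $\overline{V_0}$ is $\mathcal{F}^{\prime}(\omega)$; this will matter for the Fredholm/Lyapunov--Schmidt argument in the proof of \Cref{thm:main}, where one works with operators on $Y$.)

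For (iii), I would express $\omega\mapsto\mathcal{F}^{\prime}(\omega) = P(\bar u)^{\ast}\circ\overline{\varphi^{\prime}(\omega)}^{\ast}\circ\mathcal{E}^{\prime}(\varphi(\omega))$ as a composition of analytic maps: $\omega\mapsto\overline{\varphi^{\prime}(\omega)}\in\mathcal{L}(Y)$ is analytic by \Cref{prop:propertiesofchart}(iii); adjunction $\mathcal{L}(Y)\to\mathcal{L}(Y^{\ast})$, $A\mapsto A^{\ast}$, is linear and bounded, hence analytic by \Cref{ex:multilinAnalytic}; $\omega\mapsto\mathcal{E}^{\prime}(\varphi(\omega))\in Y^{\ast}$ is analytic as the composition of the analytic maps $\varphi$ and $\mathcal{E}^{\prime}$ (using assumption (ii) and \Cref{thm:companalytic}); the evaluation map $\mathcal{L}(Y^{\ast})\times Y^{\ast}\to Y^{\ast}$, $(B,\xi)\mapsto B\xi$, is bilinear and bounded, hence analytic by \Cref{ex:multilinAnalytic}; and finally post-composition with the fixed bounded operator $P(\bar u)^{\ast}$ is analytic. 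Assembling these via \Cref{thm:companalytic} yields analyticity of $\omega\mapsto\mathcal{F}^{\prime}(\omega)$ with values in $\overline{V_0}^{\ast}$ (restricting the $Y^{\ast}$-valued analytic map along the fixed bounded restriction operator $Y^{\ast}\to\overline{V_0}^{\ast}$).

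The only genuinely delicate point is (ii): one must be careful that $\mathcal{F}^{\prime}(\omega)$, naturally an element of $V_0^{\ast}$, really does extend to $\overline{V_0}^{\ast}$ and that the extension is correctly identified with the stated composition on $\overline{V_0}$ — here the finite-rank structure of $\overline{\psi^{\prime}(\omega)}$ from \Cref{prop:propertiesofchart}(i) is what guarantees $\overline{\varphi^{\prime}(\omega)}\in\mathcal{L}(Y)$ in the first place, and \Cref{prop:propertiesTuM}(ii) identifies $\overline{V_0}$ with $\overline{\mathcal{T}_{\bar u}\mathcal{M}}$. Everything else is bookkeeping with the composition theorem.
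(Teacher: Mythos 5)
Your proposal is correct and follows essentially the same route as the paper: analyticity of $\mathcal{F}$ via \Cref{thm:companalytic}, the chain rule plus the extension $\overline{\varphi^{\prime}(\omega)}\in\mathcal{L}(Y)$ and the identity $P(\bar u)v=v$ on $\overline{V_0}$ for (ii), and the same decomposition into analytic building blocks (adjunction, evaluation, the fixed projection) for (iii). Your remark on extending $\mathcal{F}^{\prime}(\omega)$ from $V_0$ to $\overline{V_0}$ by density is in fact spelled out slightly more carefully than in the paper's own proof, but the argument is the same.
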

\begin{proof}
	\begin{enumerate}[(i)]
		\item This follows from \Cref{thm:companalytic} since $\mathcal{E}$ and $\varphi$ are analytic.
		\item Let $\omega\in \Omega_0, v\in V_0$ and $P(\bar{u})\in\mathcal{L}(Y)$ be as in \Cref{rem:projTbaruM}. We compute
		\begin{align*}
		\mathcal{F}^{\prime}(\omega)v &= \mathcal{E}^{\prime}(\varphi(\omega))\circ\varphi^{\prime}(\omega)v = \mathcal{E}^{\prime}(\varphi(\omega))\circ\overline{\varphi^{\prime}(\omega)}v \\
		&= \overline{\varphi^{\prime}(\omega)}^{\ast}\circ\mathcal{E}^{\prime}(\varphi(\omega))(P(\bar{u})v) = P(\bar{u})^{\ast} \circ \overline{\varphi^{\prime}(\omega)}^{\ast} \circ \mathcal{E}^{\prime}(\varphi(\omega)) v
		\end{align*} using that $P(\bar{u})v=v$ since $v \in V_0 \subset \overline{V_0}$. Since $P(\bar{u})$ projects onto $\overline{V_0}^{\ast},$ (ii) follows.
		\item The chart $\varphi\colon\Omega_0\to U$ is analytic and so is $\mathcal{E}^{\prime}\colon U \to Y^{\ast}$ by assumption (ii) in \Cref{thm:main}. Thus, so is their composition $\omega\mapsto \mathcal{E}^{\prime}(\varphi(\omega))\colon \Omega_0\to Y^{\ast}$ by \Cref{thm:companalytic}. By \Cref{prop:propertiesofchart}, the extension $\overline{\varphi^{\prime}}\colon \Omega_0 \to \mathcal{L}(Y)$ is analytic, and so is taking the adjoint $T\mapsto T^{\ast}\colon \mathcal{L}(Y)\to \mathcal{L}(Y^{\ast})$ by \Cref{ex:multilinAnalytic}, since it is linear and bounded. Similarly, the evaluation map $\mathcal{L}(Y^{\ast})\times Y^{\ast}\to Y^{\ast}, (T, y^{\ast})\mapsto Ty^{\ast}$ is analytic. Therefore, $\Omega_0\to \overline{V_0}^{\ast}, \omega \mapsto \mathcal{F}^{\prime}(\omega) = P(\bar{u})\circ \overline{\varphi^{\prime}(\omega)}^{\ast}\circ \mathcal{E}^{\prime}(\varphi(\omega))$ is analytic, since the projection $P(\bar{u})\colon Y\to \overline{V_0}^{\ast}$ is analytic. \qedhere
	\end{enumerate}
\end{proof}
The following lemma justifies our approach to study $\mathcal{F}$ in order to prove \Cref{thm:main}.
\begin{lem}\label{lem:FLojaIffE_MLoja}
	Suppose $\mathcal{E}$ and $\mathcal{G}$ are analytic and satisfy assumptions (i), (ii), (iv) and (vi) in \Cref{thm:main}. Let $\bar{u}\in U$ and let $\varphi$ be the chart centered at $\bar{u}$ defined in \Cref{thm:localgraph}. Let $\bar{\omega}\in \Omega_0$ such that $\varphi(\bar{\omega}) = \bar{u}$ and $\mathcal{F}= \mathcal{E}\circ \varphi$ as in \Cref{thm:propertiesofF}. Then the following are equivalent.
	\begin{enumerate}[(i)]
		\item $\mathcal{F}$ satisfies a {\L}ojasiewicz--Simon gradient inequality at $\bar{\omega}$, i.e. there exist $C, \sigma^{\prime}>0$ and $\theta\in (0, \frac{1}{2}]$ such that
		\begin{align*}
		|\mathcal{F}(\omega)-\mathcal{F}(\bar{\omega})|^{1-\theta}\leq C \Norm{\mathcal{F}^{\prime}(\omega)}{\overline{V_0}^{\ast}} \text{ for all }\Norm{\omega-\bar{\omega}}{V}\leq \sigma^{\prime}.
		\end{align*}
		\item $\mathcal{E}\vert_{\mathcal{M}}$ satisfies a refined {\L}ojasiewicz--Simon gradient inequality \eqref{eq:LSnonlin} near $\bar{u}$.
	\end{enumerate}
\end{lem}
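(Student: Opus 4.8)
The plan is to transport both inequalities through the chart $\varphi$ from \Cref{thm:localgraph}, using that $\varphi\colon\Omega_0\to\mathcal{M}\cap\Omega$ is a diffeomorphism with $\varphi(\bar\omega)=\bar u$ and that its extended derivative intertwines $\overline{V_0}$ with $\overline{\mathcal{T}_u\mathcal{M}}$. Since $\mathcal{F}(\omega)-\mathcal{F}(\bar\omega)=\mathcal{E}(\varphi(\omega))-\mathcal{E}(\bar u)$ holds by the very definition of $\mathcal{F}$ and $\bar\omega$, the whole content reduces to a two-sided comparison of the dual norms $\Norm{\mathcal{F}^{\prime}(\omega)}{\overline{V_0}^{\ast}}$ and $\Norm{\mathcal{E}^{\prime}(\varphi(\omega))}{\overline{\mathcal{T}_{\varphi(\omega)}\mathcal{M}}^{\ast}}$, together with a matching of the two neighbourhoods; the exponent $\theta$ is never altered.

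First I would establish that for every $\omega\in\Omega_0$, with $u=\varphi(\omega)$, the restriction $\overline{\varphi^{\prime}(\omega)}\vert_{\overline{V_0}}\colon\overline{V_0}\to\overline{\mathcal{T}_u\mathcal{M}}$ is a Banach space isomorphism with operator norm at most $M\defeq\sup_{\omega\in\Omega_0}\Norm{\overline{\varphi^{\prime}(\omega)}}{\mathcal{L}(Y)}<\infty$ (finite by \Cref{prop:propertiesofchart}) and inverse of operator norm at most $2$. Indeed, by \Cref{prop:propertiesTuM} we have $\varphi^{\prime}(\omega)V_0=\mathcal{T}_u\mathcal{M}$, so for $y\in\overline{V_0}$ and $v_n\in V_0$ with $v_n\to y$ in $Y$, the elements $\overline{\varphi^{\prime}(\omega)}v_n=\varphi^{\prime}(\omega)v_n\in\mathcal{T}_u\mathcal{M}$ converge to $\overline{\varphi^{\prime}(\omega)}y$; hence $\overline{\varphi^{\prime}(\omega)}\,\overline{V_0}\subset\overline{\mathcal{T}_u\mathcal{M}}$. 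By \Cref{prop:propertiesofchart}, $\overline{\varphi^{\prime}(\omega)}\vert_{\overline{V_0}}$ is bounded below by $\tfrac12$, hence injective with closed range, and since its range contains $\varphi^{\prime}(\omega)V_0=\mathcal{T}_u\mathcal{M}$ it must equal $\overline{\mathcal{T}_u\mathcal{M}}$; the norm bounds follow from \Cref{prop:propertiesofchart}.

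Next, by \Cref{thm:propertiesofF} and the fact that $P(\bar u)$ restricts to the identity on $\overline{V_0}$, the functional $\mathcal{F}^{\prime}(\omega)\in\overline{V_0}^{\ast}$ acts by $\mathcal{F}^{\prime}(\omega)y=\mathcal{E}^{\prime}(\varphi(\omega))\big(\overline{\varphi^{\prime}(\omega)}y\big)$ for $y\in\overline{V_0}$. Substituting $z=\overline{\varphi^{\prime}(\omega)}y$, which by the previous step ranges over all of $\overline{\mathcal{T}_{\varphi(\omega)}\mathcal{M}}$ while $\tfrac1M\Norm{z}{Y}\le\Norm{y}{Y}\le 2\Norm{z}{Y}$, and taking suprema yields
\begin{align*}
\tfrac12\,\Norm{\mathcal{E}^{\prime}(\varphi(\omega))}{\overline{\mathcal{T}_{\varphi(\omega)}\mathcal{M}}^{\ast}}\le\Norm{\mathcal{F}^{\prime}(\omega)}{\overline{V_0}^{\ast}}\le M\,\Norm{\mathcal{E}^{\prime}(\varphi(\omega))}{\overline{\mathcal{T}_{\varphi(\omega)}\mathcal{M}}^{\ast}}\qquad\text{for all }\omega\in\Omega_0.
\end{align*}

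Finally I would match the neighbourhoods: writing $\varphi(\omega)=\omega+\psi(\omega)$ with $\psi$ analytic, hence locally Lipschitz, and letting $\Pi_{V_0}\colon V\to V_0$ be the bounded projection along $V_1$ (so that $\omega=\Pi_{V_0}\varphi(\omega)$ and every $u\in\mathcal{M}$ near $\bar u$ equals $\varphi(\Pi_{V_0}u)$), one obtains constants $c_1,c_2>0$ with $c_1\Norm{\omega-\bar\omega}{V}\le\Norm{\varphi(\omega)-\bar u}{V}\le c_2\Norm{\omega-\bar\omega}{V}$ on a small ball. For (i)$\Rightarrow$(ii), given the inequality for $\mathcal{F}$ with constants $C,\sigma^{\prime},\theta$, choose $\sigma$ so small that $\Norm{u-\bar u}{V}\le\sigma$ forces $u=\varphi(\omega)$ with $\Norm{\omega-\bar\omega}{V}\le\sigma^{\prime}$, and chain $\abs{\mathcal{E}(u)-\mathcal{E}(\bar u)}^{1-\theta}=\abs{\mathcal{F}(\omega)-\mathcal{F}(\bar\omega)}^{1-\theta}\le C\Norm{\mathcal{F}^{\prime}(\omega)}{\overline{V_0}^{\ast}}\le CM\Norm{\mathcal{E}^{\prime}(u)}{\overline{\mathcal{T}_u\mathcal{M}}^{\ast}}$, which is \eqref{eq:LSnonlin}. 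The implication (ii)$\Rightarrow$(i) is symmetric, now using the lower bound in the displayed estimate and shrinking $\sigma^{\prime}$ so that $\Norm{\omega-\bar\omega}{V}\le\sigma^{\prime}$ implies $\Norm{\varphi(\omega)-\bar u}{V}\le\sigma$. The only step needing genuine care is the isomorphism claim of the second paragraph together with the uniformity of the bounds over $\Omega_0$; once that is in place, the equivalence is a bookkeeping of constants.
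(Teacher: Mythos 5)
Your proposal is correct and follows essentially the same route as the paper: transport the inequality through the chart $\varphi$, use the formula $\mathcal{F}^{\prime}(\omega)y=\mathcal{E}^{\prime}(\varphi(\omega))\big(\overline{\varphi^{\prime}(\omega)}y\big)$ from \Cref{thm:propertiesofF}, and compare the dual norms via the uniform upper bound on $\overline{\varphi^{\prime}(\omega)}$ and the lower bound $\tfrac12$ from \Cref{prop:propertiesofchart}, matching neighbourhoods by continuity. The only (harmless) variation is that you derive the lower bound by proving $\overline{\varphi^{\prime}(\omega)}\vert_{\overline{V_0}}\colon\overline{V_0}\to\overline{\mathcal{T}_u\mathcal{M}}$ is onto via a closed-range argument, whereas the paper works with $w\in\mathcal{T}_u\mathcal{M}$, writes $w=\varphi^{\prime}(\omega)v$, and extends by density using $\mathcal{E}^{\prime}(u)\in Y^{\ast}$.
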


\begin{proof} Suppose (i) holds. Let $u\in \mathcal{M}$ with $\Norm{u-\bar{u}}{V}\leq \sigma$. For $\sigma>0$ small enough, we can assume $u\in \Omega$, $u=\varphi(\omega)$ for a unique $\omega\in \Omega_0$ and $\Norm{\omega-\bar{\omega}}{V}\leq \sigma^{\prime}$ by continuity. Then by (i), we have
	\begin{align}\label{eq:Lojacomposed0}
	|\mathcal{E}(u)-\mathcal{E}(\bar{u})\vert^{1-\theta} &= \vert \mathcal{F}(\omega) - \mathcal{F}(\bar{\omega})\vert^{1-\theta}\leq C \Norm{\mathcal{F}^{\prime}(\omega)}{\overline{V_0}^{\ast}}.
	\end{align}
	Now, by \Cref{prop:propertiesTuM}, we have $\mathcal{T}_{\varphi(\omega)}\mathcal{M} = \Image\varphi^{\prime}(\omega)$ and thus $\varphi^{\prime}(\omega)v\in \mathcal{T}_{\varphi(\omega)}\mathcal{M}$ for $v\in V_0$.
	By continuity of the extension (see \Cref{prop:propertiesofchart} (ii)), we get $\overline{\varphi^{\prime}(\omega)}y\in \overline{\mathcal{T}_{\varphi(\omega)}\mathcal{M}}$ for $y\in \overline{V_0}$. Using $P(\bar{u})y=y$ for $y\in \overline{V_0}$ and \Cref{thm:propertiesofF} (ii), we compute
	\begin{align}\label{eq:nablaFEstimate0}
	\Norm{\mathcal{F}^{\prime}(\omega)}{\overline{V_0}^{\ast}} &= \sup_{0\neq y\in \overline{V_0}} \frac{P(\bar{u})^{\ast}\circ \overline{\varphi^{\prime}({\omega})}^{\ast} \circ \mathcal{E}^{\prime}(u)y}{\Norm{y}{Y}} = \sup_{0\neq y\in \overline{V_0}} \frac{\overline{\varphi^{\prime}({\omega})}^{\ast} \circ \mathcal{E}^{\prime}({u})(P(\bar{u})
		y)}{\Norm{y}{Y}}\nonumber\\
	&= \sup_{0\neq y\in \overline{V_0}} \frac{\mathcal{E}^{\prime}(u)(\overline{\varphi^{\prime}(\omega)}y)}{\Norm{y}{Y}} \leq  \sup_{0\neq y\in \overline{V_0}} \frac{\Norm{\mathcal{E}^{\prime}({u})}{\overline{\mathcal{T}_{u}\mathcal{M}}^{\ast}} \norm{\overline{\varphi^{\prime}(\omega)}y}{\overline{\mathcal{T}_{u}\mathcal{M}}}}{\Norm{y}{Y}}\nonumber \\
	&\leq \Norm{\mathcal{E}^{\prime}({u})}{\overline{\mathcal{T}_{u}\mathcal{M}}^{\ast}} \sup_{0\neq y\in \overline{V_0}} \frac{ \norm{\overline{\varphi^{\prime}(\omega)}y}{Y}}{\Norm{y}{Y}} \leq \Norm{\mathcal{E}^{\prime}({u})}{\overline{\mathcal{T}_{u}\mathcal{M}}^{\ast}} \sup_{\omega\in \Omega_0} \norm{\overline{\varphi^{\prime}(\omega)}}{\mathcal{L}(Y)}.
	\end{align}
	Now, reducing $\sigma, \sigma^{\prime}>0$ if necessary, and using \Cref{prop:propertiesofchart} (i), we may assume that $\sup_{v\in \Omega_0} \norm{\overline{\varphi^{\prime}(v)}}{\mathcal{L}(Y)}\leq 1+\sup_{v\in \Omega_0} \norm{\overline{\psi^{\prime}(v)}}{\mathcal{L}(Y)}  \leq C^{\prime}<\infty$. Hence, using \eqref{eq:Lojacomposed0} and \eqref{eq:nablaFEstimate0}, we conclude $\abs{\mathcal{E}(u)-\mathcal{E}(\bar{u})}^{1-\theta}\leq CC^{\prime} \Norm{\mathcal{E}^{\prime}(u)}{\overline{\mathcal{T}_u\mathcal{M}}^{\ast}}$.
	
	Conversely, suppose $\mathcal{E}\vert_{\mathcal{M}}$ satisfies \eqref{eq:LSnonlin} for some $C, \sigma>0$ and $\theta\in (0, \frac{1}{2}]$. Let $\omega\in \Omega_{0}$ with $\norm{\omega-\bar{\omega}}{V}\leq \sigma^{\prime}$. Define $u\defeq\varphi(\omega)$ and let $\sigma^{\prime}>0$ be small enough such that $\norm{u-\bar{u}}{V}\leq \sigma$. Then we have
	\begin{align}\label{eq:LS_F<E'}
	\abs{\mathcal{F}(\omega)-\mathcal{F}(\bar{\omega})}^{1-\theta} = \abs{\mathcal{E}(u)-\mathcal{E}(\bar{u})}^{1-\theta} \leq C\norm{\mathcal{E}^{\prime}(u)}{\overline{\mathcal{T}_u\mathcal{M}}^{\ast}}.
	\end{align}
	Now, fix $0\neq w\in {\mathcal{T}_u\mathcal{M}}$. Then by \Cref{prop:propertiesTuM}, $w=\varphi^{\prime}(\omega)v$ for some $0\neq v\in V_0$. We have
	\begin{align*}
	\mathcal{E}^{\prime}(u)w = \mathcal{E}^{\prime}(u)(\varphi^{\prime}(\omega)v) = \mathcal{E}^{\prime}(u)(\overline{\varphi^{\prime}(\omega)}v)  = \overline{\varphi^{\prime}({\omega})}^{\ast}\circ \mathcal{E}^{\prime}(u)(P(\bar{u})v) = \mathcal{F}^{\prime}(\omega)v
	\end{align*}
	using $P(\bar{u}) v = v$ since $v\in \overline{V_0}$ and \Cref{thm:propertiesofF} (ii). Thus, we find
	\begin{align}\label{eq:E'<F'estimate}
	\frac{\mathcal{E}^{\prime}(u)w}{\norm{w}{Y}} = \frac{\mathcal{F}^{\prime}(\omega)v}{\norm{\overline{\varphi^{\prime}(\omega)}v}{Y}} \leq 2\frac{\mathcal{F}^{\prime}(\omega)v}{\norm{v}{Y}} \leq 2\sup_{0\neq v\in V_0}\frac{\mathcal{F}^{\prime}(\omega)v}{\norm{v}{Y}} \leq 2\sup_{0\neq y\in \overline{V_0}}\frac{\mathcal{F}^{\prime}(\omega)y}{\norm{y}{Y}} = 2\Norm{\mathcal{F}^{\prime}(\omega)}{\overline{V_0}^{\ast}},
	\end{align}
	using \Cref{prop:propertiesofchart} (v) and reducing $\sigma^{\prime}>0$ if necessary. Since $\mathcal{E}^{\prime}(u)\in Y^{\ast}$ by assumption,  we may conclude that \eqref{eq:E'<F'estimate} remains valid if $0\neq w\in \overline{\mathcal{T}_u\mathcal{M}}$ by continuity. Combining this with \eqref{eq:LS_F<E'}, the claim follows.
\end{proof}

\begin{thm}\label{thm:DF'Fredholm}
	Suppose $\mathcal{E}, \mathcal{G}$ and $\bar{u}\in U$ satisfy the assumptions of \Cref{thm:main} above and let $\mathcal{F}$ be as in \Cref{thm:propertiesofF}. Then, for $\bar{\omega}\in\Omega_0$ with $\varphi(\bar{\omega})=\bar{u}$, the operator $\mathcal{F}^{\prime\prime}(\bar{\omega})\colon V_0\to \overline{V_0}^{\ast}$ is Fredholm of index zero.
\end{thm}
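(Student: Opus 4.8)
The plan is to differentiate the identity $\mathcal{F}^{\prime}(\omega)=P(\bar u)^{\ast}\circ\overline{\varphi^{\prime}(\omega)}^{\ast}\,\mathcal{E}^{\prime}(\varphi(\omega))$ from \Cref{thm:propertiesofF}~(ii), to peel off a ``principal part'' that inherits the Fredholm property of $\mathcal{E}^{\prime\prime}(\bar u)$, and to dispose of the remainder as a compact perturbation using assumption~(v). Throughout I would identify $\Image P(\bar u)^{\ast}$ with $\overline{V_0}^{\ast}$ in the usual way, so that $P(\bar u)^{\ast}$, viewed with target $\overline{V_0}^{\ast}$, is just the restriction map $R\colon Y^{\ast}\to\overline{V_0}^{\ast}$, $f\mapsto f|_{\overline{V_0}}$; this uses that $P(\bar u)$ is the identity on $\overline{V_0}$ (\Cref{rem:projTbaruM} together with \Cref{prop:propertiesofchart}~(iv)). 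Differentiating the analytic map $\mathcal{F}^{\prime}\colon\Omega_0\to\overline{V_0}^{\ast}$ (\Cref{thm:propertiesofF}~(iii)) at $\bar\omega$ in a direction $v\in V_0$ by the product and chain rules — the adjoint map $T\mapsto T^{\ast}$ and operator evaluation being bounded linear, hence analytic (\Cref{ex:multilinAnalytic}) — and using $\overline{\varphi^{\prime}(\omega)}=\Id_Y+\overline{\psi^{\prime}(\omega)}$ as well as $\varphi^{\prime}(\bar\omega)v=v$ for $v\in V_0$ (immediate from \eqref{eq:varphi'} since $V_0=\ker\mathcal{G}^{\prime}(\bar u)$), one obtains
\[
\mathcal{F}^{\prime\prime}(\bar\omega)v = P(\bar u)^{\ast}\circ\big[\big(\overline{\psi^{\prime}}\big)^{\prime}(\bar\omega)v\big]^{\ast}\,\mathcal{E}^{\prime}(\bar u) \;+\; P(\bar u)^{\ast}\circ\overline{\varphi^{\prime}(\bar\omega)}^{\ast}\,\mathcal{E}^{\prime\prime}(\bar u)v,
\]
where $\big(\overline{\psi^{\prime}}\big)^{\prime}(\bar\omega)v\in\mathcal{L}(Y)$ is the Fréchet derivative at $\bar\omega$ of the analytic map $\overline{\psi^{\prime}}=\overline{\varphi^{\prime}}-\Id_Y\colon\Omega_0\to\mathcal{L}(Y)$, applied to $v$. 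Since $\overline{\varphi^{\prime}(\bar\omega)}$ is the identity on $\overline{V_0}$, one checks $P(\bar u)^{\ast}\circ\overline{\varphi^{\prime}(\bar\omega)}^{\ast}=P(\bar u)^{\ast}$, so the second summand equals $T_0 v\defeq R\big(\mathcal{E}^{\prime\prime}(\bar u)v\big)$, i.e. $T_0=R\circ\mathcal{E}^{\prime\prime}(\bar u)\circ j\colon V_0\to\overline{V_0}^{\ast}$ with $j\colon V_0\hookrightarrow V$ the inclusion, while the first summand defines an operator $K\colon V_0\to\overline{V_0}^{\ast}$.

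The first step is to show $T_0$ is Fredholm of index zero, using \Cref{thm:FredholmIndexFormula}. The inclusion $j\colon V_0\hookrightarrow V$ has trivial kernel and closed image of codimension $m$ (\Cref{prop:propertiesTuM}~(iii)), hence is Fredholm of index $-m$; $\mathcal{E}^{\prime\prime}(\bar u)\colon V\to Y^{\ast}$ is Fredholm of index $0$ by assumption~(iii); and $R\colon Y^{\ast}\to\overline{V_0}^{\ast}$ is Fredholm of index $+m$, being surjective by Hahn--Banach with kernel $\{f\in Y^{\ast}: f|_{\overline{V_0}}=0\}\cong(Y/\overline{V_0})^{\ast}\cong\R^m$, again by \Cref{prop:propertiesTuM}~(iii). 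Hence $\ind T_0=m+0+(-m)=0$.

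The second step is to show $K\colon V_0\to\overline{V_0}^{\ast}$, $Kv=P(\bar u)^{\ast}\circ\big[\big(\overline{\psi^{\prime}}\big)^{\prime}(\bar\omega)v\big]^{\ast}\,\mathcal{E}^{\prime}(\bar u)$, is compact. It factors as $\big(\overline{\psi^{\prime}}\big)^{\prime}(\bar\omega)$ followed by the bounded linear maps $T\mapsto T^{\ast}$, evaluation at $\mathcal{E}^{\prime}(\bar u)$, and $P(\bar u)^{\ast}$, so it suffices that $\big(\overline{\psi^{\prime}}\big)^{\prime}(\bar\omega)\colon V_0\to\mathcal{L}(Y)$ is compact. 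Differentiating $\overline{\psi^{\prime}(\omega)}=-\big(\tfrac{\partial\mathcal{G}}{\partial v_1}(\varphi(\omega))\big)^{-1}\circ\overline{\mathcal{G}^{\prime}(\varphi(\omega))}$ at $\bar\omega$ by the product rule produces two terms: one in which $v$ enters only through $\tfrac{d}{d\omega}\big|_{\bar\omega}\big(\tfrac{\partial\mathcal{G}}{\partial v_1}(\varphi(\omega))\big)^{-1}\in\mathcal{L}(\R^m,V_1)$, which lies in a finite-dimensional space since $\dim V_1=m$, so this term is of finite rank; and one of the form $\big(\tfrac{\partial\mathcal{G}}{\partial v_1}(\bar u)\big)^{-1}\circ\big(\overline{\mathcal{G}^{\prime}}\big)^{\prime}(\bar u)[\,j\,\cdot\,]$, which is compact because $\big(\overline{\mathcal{G}^{\prime}}\big)^{\prime}(\bar u)$ is compact by assumption~(v) and precomposition with $j$ and postcomposition with a bounded operator preserve compactness. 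Thus $\big(\overline{\psi^{\prime}}\big)^{\prime}(\bar\omega)$, and hence $K$, is compact. Finally, $\mathcal{F}^{\prime\prime}(\bar\omega)=T_0+K$, so \Cref{prop:Fredholmproperties}~(ii) yields that $\mathcal{F}^{\prime\prime}(\bar\omega)$ is Fredholm of index $\ind T_0=0$.

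I expect the main obstacle to be the compactness of $\big(\overline{\psi^{\prime}}\big)^{\prime}(\bar\omega)$: this is the one point where assumption~(v) is genuinely needed (as opposed to mere analyticity together with the Fredholm hypothesis on $\mathcal{E}^{\prime\prime}$), and making it precise requires carefully unwinding the composition in \eqref{eq:psi'} and its $\omega$-dependence. Some bookkeeping is also needed for the identification $\overline{V_0}^{\ast}\cong\Image P(\bar u)^{\ast}$ and for the reduction $P(\bar u)^{\ast}\circ\overline{\varphi^{\prime}(\bar\omega)}^{\ast}=P(\bar u)^{\ast}$, whereas the index computation itself is routine once the three factors $j$, $\mathcal{E}^{\prime\prime}(\bar u)$ and $R$ are isolated.
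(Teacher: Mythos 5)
Your proposal is correct and follows essentially the same route as the paper: differentiate $\mathcal{F}^{\prime}(\omega)=P(\bar u)^{\ast}\circ\overline{\varphi^{\prime}(\omega)}^{\ast}\mathcal{E}^{\prime}(\varphi(\omega))$, identify the principal part $P(\bar u)^{\ast}\circ\mathcal{E}^{\prime\prime}(\bar u)\circ j$ as a composition of Fredholm operators of indices $m$, $0$, $-m$, and dispose of the remainder as compact using assumption (v) together with the finite-rank structure coming from \eqref{eq:psi'}. The only (harmless) cosmetic differences are that your identity $P(\bar u)^{\ast}\circ\overline{\varphi^{\prime}(\bar\omega)}^{\ast}=P(\bar u)^{\ast}$ makes the paper's extra finite-rank term $\overline{\psi^{\prime}(\bar\omega)}^{\ast}\mathcal{E}^{\prime\prime}(\bar u)v$ vanish outright, and you verify the index of the restriction map $R$ via Hahn--Banach and the annihilator of $\overline{V_0}$ rather than via the complement $Y=\overline{V_0}\oplus Z$ as in the paper.
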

\begin{proof}
	Let $\varphi(\bar{\omega})=\bar{u}$ and $v\in V_0$. By \Cref{prop:propertiesofchart} (iv), we have $\overline{\varphi^{\prime}(\omega)}v=v$. We use the chain rule, the analyticity of $\varphi, \psi, \overline{\varphi^{\prime}}, \overline{\psi^{\prime}}$ and $\mathcal{E}^{\prime}\colon U\to Y^{\ast}$, and the analyticity and bilinearity of the evaluation map $\mathcal{L}(Y^{\ast})\times Y^{\ast}\to Y^{\ast}$ to compute
	\begin{align}
	\label{eq:DF'Formula}
	\mathcal{F}^{\prime\prime}(\bar{\omega}) v &= \dtzero~ \mathcal{F}^{\prime}(\bar{\omega}+tv) = P(\bar{u})^{\ast}\circ\dtzero\Big( \overline{\varphi^{\prime}(\bar{\omega}+tv)}^{\ast} \mathcal{E}^{\prime}(\varphi(\bar{\omega}+tv))\Big) \nonumber \\
	&= P(\bar{u})^{\ast}\circ \dtzero\Big(\mathcal{E}^{\prime}(\varphi(\bar{\omega}+tv))+\overline{ \psi^{\prime}(\bar{\omega}+tv)}^{\ast} \mathcal{E}^{\prime}(\varphi(\bar{\omega}+tv)) \Big) \nonumber\\
	&= P(\bar{u})^{\ast}\circ \mathcal{E}^{\prime\prime}(\bar{u})v+ P(\bar{u})^{\ast}\Bigg( \left(\dtzero~ \overline{\psi^{\prime}(\bar{\omega}+tv)}\right)^{\ast} \mathcal{E}^{\prime}(\bar{u}) + \overline{\psi^{\prime}(\bar{\omega})}^{\ast} \mathcal{E}^{\prime\prime}(\bar{u})v\Bigg)\nonumber\\
	&\eqdef P(\bar{u})^{\ast}\circ \mathcal{E}^{\prime\prime}(\bar{u})v +  Ev,
	\end{align}
	using \Cref{thm:propertiesofF} (ii), \Cref{prop:propertiesofchart} and the fact that $\overline{\varphi^{\prime}}^{\ast}=\Id_{Y^{\ast}}+\overline{\psi^{\prime}}^{\ast}$ by \eqref{eq:varphi'}.
	We will now show that $E\colon V_0\to \overline{V_0}^{\ast}$ is compact. First, using \eqref{eq:psi'}, we compute
	\begin{align*}
	\dtzero~ \overline{\psi^{\prime}(\bar{\omega}+tv)} &= \dtzero\Bigg(-\left(\frac{\partial\mathcal{G}}{\partial v_1}(\varphi(\bar{\omega}+tv))\right)^{-1} \circ \Big(\overline{\mathcal{G}^{\prime}(\varphi(\bar{\omega}+tv))}\Big)\Bigg)\\
	&= -\left(\dtzero~ \left(\frac{\partial\mathcal{G}}{\partial v_1}(\varphi(\bar{\omega}+tv))\right)^{-1}\right)\circ \overline{\mathcal{G}^{\prime}(\bar{u})} \\
	&\quad - \left( \frac{\partial\mathcal{G}}{\partial v_1}(\bar{u})\right)^{-1} \circ \dtzero~ \overline{\mathcal{G}^{\prime}(\varphi(\bar{\omega}+tv))} \\
	&= -\left(\dtzero~ \left(\frac{\partial\mathcal{G}}{\partial v_1}(\varphi(\bar{\omega}+tv))\right)^{-1}\right)\circ \overline{\mathcal{G}^{\prime}(\bar{u})} \\
	&\quad - \left( \frac{\partial\mathcal{G}}{\partial v_1}(\bar{u})\right)^{-1} \circ (\overline{\mathcal{G}^{\prime}})^{\prime}(\bar{u}) v \eqdef -Rv - Sv,
	\end{align*}
	where $R, S \in \mathcal{L}(V_0, \mathcal{L}(Y))$. We conclude that 
	\begin{align}\label{eq:DF'compactPerturbation1}
	\left(\dtzero~ \overline{\psi^{\prime}(\bar{\omega}+tv)}\right)^{\ast}\mathcal{E}^{\prime}(\bar{u}) & = -(Rv)^{\ast}\mathcal{E}^{\prime}(\bar{u}) - (Sv)^{\ast}\mathcal{E}^{\prime}(\bar{u}).
	\end{align}
	Recall that for $u\in \Omega=\varphi(\Omega_0)$, $\left(\frac{\partial\mathcal{G}}{\partial v_1}(u)\right)^{-1}\colon \R^m\to V_1\hookrightarrow Y$. The first part of \eqref{eq:DF'compactPerturbation1} is
	\begin{align}\label{eq:DF'compactPerturbation2}
	(Rv)^{\ast}\mathcal{E}^{\prime}(\bar{u}) &= \left(\overline{\mathcal{G}^{\prime}(\bar{u})}\right)^{\ast}\circ \left(\dtzero~ \left(\frac{\partial\mathcal{G}}{\partial v_1}(\varphi(\bar{\omega}+tv))\right)^{-1}\right)^{\ast} \mathcal{E}^{\prime}(\bar{u}),
	\end{align}
	so the image of $v\mapsto (Rv)^{\ast}\mathcal{E}^{\prime}(\bar{u})$ is contained in $\Image (\overline{\mathcal{G}^{\prime}(\bar{u}})^{\ast}\subset Y^{\ast}$, which is finite-dimensional since $\overline{\mathcal{G}^{\prime}(\bar{u})}\colon Y\to\R^m$ has finite rank.
	
	Furthermore, with $\eta \defeq \left( \left( \frac{\partial\mathcal{G}}{\partial v_1}(\bar{u})\right)^{-1}\right)^{\ast}\mathcal{E}^{\prime}(\bar{u})\in \R^m$
	we have
	\begin{align}\label{eq:DF'compactPerturbation3}
	(Sv)^{\ast}\mathcal{E}^{\prime}(\bar{u})  &= \left( (\overline{\mathcal{G}^{\prime}})^{\prime}(\bar{u}) v\right)^{\ast}\circ \left( \left( \frac{\partial\mathcal{G}}{\partial v_1}(\bar{u})\right)^{-1}\right)^{\ast}\mathcal{E}^{\prime}(\bar{u})=  \left( (\overline{\mathcal{G}^{\prime}})^{\prime}(\bar{u}) v\right)^{\ast} \eta.
	\end{align}
	
	We will now show that the operator $v\mapsto(Sv)^{\ast}\mathcal{E}^{\prime}(\bar{u})\colon V_0\to Y^{\ast}$ is compact. Let $v_n\in V_0$ for $n\in\N$ with $\Norm{v_n}{V}\leq 1$. By assumption (v) in \Cref{thm:main}, passing to a subsequence, we can assume $(\overline{\mathcal{G}^{\prime}})^{\prime}(\bar{u})v_n\to A$ in $\mathcal{L}(Y,\R^m)$. Since taking the adjoint is continuous, this yields $\left((\overline{\mathcal{G}^{\prime}})^{\prime}(\bar{u})v_n\right)^{\ast}\to A^{\ast}$ in $\mathcal{L}(\R^m, Y^{\ast})$. But this clearly implies $\left(\overline{\mathcal{G}^{\prime}})^{\prime}(\bar{u})v_n\right)^{\ast} \eta\to A^{\ast}\eta $ in $Y^{\ast}$.
	
	By the previous arguments, together with \eqref{eq:DF'compactPerturbation1}, \eqref{eq:DF'compactPerturbation2} and \eqref{eq:DF'compactPerturbation3}, we conclude that the linear operator $V_0\to Y^{\ast}, v\mapsto \left(\dtzero~ \overline{\psi^{\prime}(\bar{\omega}+tv)}\right)^{\ast}\mathcal{E}^{\prime}(\bar{u})$ is compact.
	
	Clearly, since $\overline{\psi^{\prime}(\bar{\omega})}$ has finite rank, the image of $v\mapsto \overline{\psi^{\prime}(\bar{\omega})}^{\ast}\mathcal{E}^{\prime\prime}(\bar{u})v$ is contained in $\Image \left(\overline{\psi^{\prime}(\bar{\omega})}\right)^{\ast}$ and thus finite-dimensional. As a consequence, 
	\begin{align*}
	E\colon V_0\to \overline{V_0}^{\ast}, Ev = P(\bar{u})^{\ast}\left(-(Rv)^{\ast}\mathcal{E}^{\prime}(\bar{u}) - (Sv)^{\ast}\mathcal{E}^{\prime}(\bar{u}) + \overline{\psi^{\prime}(\bar{\omega})}^{\ast}\mathcal{E}^{\prime\prime}(\bar{u})v\right)
	\end{align*}
	is a compact operator. We will now show that $\mathcal{F}^{\prime\prime}(\bar{\omega})\colon V_0\to \overline{V_0}^{\ast}$ is Fredholm with $\ind{\mathcal{F}^{\prime\prime}(\overline{\omega})}=0$. By \eqref{eq:DF'Formula} and \Cref{prop:Fredholmproperties}, it is enough to show that $T\colon V\mapsto \overline{V_0}^{\ast},$ $v\mapsto P(\bar{u})^{\ast}\circ\mathcal{E}^{\prime\prime}(\bar{u})v$ is Fredholm of index zero.
	
	Note that $T = P(\bar{u})^{\ast}\circ \mathcal{E}^{\prime\prime}(\bar{u})\circ \iota$. Here, $\iota\colon V_0\hookrightarrow V$ is the inclusion, which
	is Fredholm since $\ker\iota=\{0\}$ and $\operatorname{codim}(\Image\iota, V) = \operatorname{codim}(V_0,V)= \codim(\mathcal{T}_{\bar{u}}\mathcal{M}, V)=
	m$ by \Cref{prop:propertiesTuM}. Moreover, recall from \Cref{rem:projTbaruM} that $P(\bar{u})\colon Y\to Y$ is the projection onto $\overline{V_0} = \overline{\mathcal{T}_{\bar{u}}\mathcal{M}}$ with $\codim(\overline{V_0},Y)=m$ by \Cref{prop:propertiesTuM}. Thus $Y = \overline{V_0}\oplus Z$ with $\dim Z=m$. Then $\ker P(\bar{u})^{\ast} \cong Z^{\ast}$, so $\dim \ker P(\bar{u})^{\ast} = \dim Z^{\ast} = m$. Clearly, $\codim(\Image P(\bar{u})^{\ast},\overline{V_0}^{\ast})=0$.
	
	Therefore, by \Cref{thm:FredholmIndexFormula}, the composition $T = P(\bar{u})^{\ast}\circ \mathcal{E}^{\prime\prime}(\bar{u}) \circ \iota$ is Fredholm with $\ind{T} = \ind{P(\bar{u})}^{\ast}+\ind{\mathcal{E}^{\prime\prime}(\bar{u})}+\ind{\iota} = m + \ind{\mathcal{E}^{\prime\prime}(\bar{u})} - m = 0$.
\end{proof}
Now, it is not difficult to see that $\mathcal{F}$ satisfies a {\L}ojasiewicz--Simon gradient inequality at a critical point by \Cref{thm:FredholmLoja}.
\begin{thm}\label{thm:lojacomposed}
	Suppose $\mathcal{E}, \mathcal{G}$ and $\bar{u}\in \mathcal{M}$ satisfy the assumptions of \Cref{thm:main}. Let $\varphi$ be the chart constructed in \Cref{thm:localgraph} with $\bar{v}\in \Omega_0$ such that $\varphi(\bar{\omega})=\bar{u}$. If $\mathcal{F}^{\prime}(\bar{\omega})=0$, then there exist $C,\sigma^{\prime}>0$ and $\theta\in(0,\frac{1}{2}]$ such that
	\begin{align*}
	|\mathcal{F}(\omega)-\mathcal{F}(\bar{\omega})|^{1-\theta}\leq C \Norm{\mathcal{F}^{\prime}(\omega)}{\overline{V_0}^{\ast}} \text{ for all }\Norm{\omega-\bar{\omega}}{V}\leq \sigma^{\prime}.
	\end{align*}
\end{thm}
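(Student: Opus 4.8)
The plan is to deduce this directly from \Cref{thm:FredholmLoja}, applied to the energy $\mathcal{F}$ on the Banach space $V_0$. Concretely, in \Cref{thm:FredholmLoja} we let the role of the ambient space $V$ be played by the closed subspace $V_0\subset V$ (equipped with the norm inherited from $V$, so that it is a Banach space), the open set $U$ by $\Omega_0\subset V_0$, the energy $\mathcal{E}$ by $\mathcal{F}$, the critical point $\bar{u}$ by $\bar\omega$, and the auxiliary space $Z$ by the closure $\overline{V_0}=\overline{V_0}^{\Norm{\cdot}{Y}}\subset Y$. It then remains to verify the three hypotheses of \Cref{thm:FredholmLoja}, which have essentially all been prepared in the preceding results. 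First, $\mathcal{F}\in\mathcal{C}^{\omega}(\Omega_0;\R)$ by \Cref{thm:propertiesofF} (i).

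For hypothesis (i), note that since $V\hookrightarrow Y$ and $V_0\subset V$ is a subspace, the inclusion $V_0\hookrightarrow\overline{V_0}$ is continuous, and it is dense by the very definition of the closure; moreover $\overline{V_0}$ is a Banach space, being a closed subspace of the Banach space $Y$. This yields the induced embedding $\overline{V_0}^{\ast}\hookrightarrow V_0^{\ast}$, under which the functionals $\mathcal{F}^{\prime}(\omega)$ may be regarded as elements of $\overline{V_0}^{\ast}$. Hypothesis (ii) of \Cref{thm:FredholmLoja}, namely that $\mathcal{F}^{\prime}(\omega)\in\overline{V_0}^{\ast}$ for all $\omega\in\Omega_0$ and that the map $\Omega_0\ni\omega\mapsto\mathcal{F}^{\prime}(\omega)\in\overline{V_0}^{\ast}$ is analytic, is precisely \Cref{thm:propertiesofF} (ii) and (iii). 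Hypothesis (iii), that the second derivative $\mathcal{F}^{\prime\prime}(\bar\omega)\colon V_0\to\overline{V_0}^{\ast}$ is Fredholm of index zero, is exactly \Cref{thm:DF'Fredholm}.

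Since $\mathcal{F}^{\prime}(\bar\omega)=0$ by assumption, $\bar\omega$ is a critical point of $\mathcal{F}$, so \Cref{thm:FredholmLoja} applies and produces $C,\sigma^{\prime}>0$ and $\theta\in(0,\tfrac12]$ with $|\mathcal{F}(\omega)-\mathcal{F}(\bar\omega)|^{1-\theta}\leq C\Norm{\mathcal{F}^{\prime}(\omega)}{\overline{V_0}^{\ast}}$ for all $\omega\in\Omega_0$ with $\Norm{\omega-\bar\omega}{V_0}\leq\sigma^{\prime}$; since $V_0$ carries the norm of $V$, this is the stated bound for $\Norm{\omega-\bar\omega}{V}\leq\sigma^{\prime}$ (shrinking $\Omega_0$ if necessary so that the ball is contained in $\Omega_0$). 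There is no serious obstacle remaining at this stage: the genuine work — constructing the chart $\varphi$, controlling its extension $\overline{\varphi^{\prime}}$, and identifying $\mathcal{F}^{\prime}$ and $\mathcal{F}^{\prime\prime}(\bar\omega)$ — was done in \Cref{sec:chartgraph} and in \Cref{thm:propertiesofF,thm:DF'Fredholm}. The one subtlety worth emphasizing is that $\mathcal{F}^{\prime}$ takes values in the \emph{smaller} dual $\overline{V_0}^{\ast}$ rather than merely in $V_0^{\ast}$, which is exactly what allows \Cref{thm:FredholmLoja} to deliver the refined form of the inequality needed later in the proof of \Cref{thm:main}, and this was the point established in \Cref{thm:propertiesofF} (ii).
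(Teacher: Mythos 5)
Your proposal is correct and follows the same route as the paper: both apply \Cref{thm:FredholmLoja} with $V_0$ in place of $V$, $\Omega_0$ in place of $U$, $Z=\overline{V_0}$, and $\mathcal{E}=\mathcal{F}$, citing \Cref{thm:propertiesofF} for analyticity of $\mathcal{F}^{\prime}$ with values in $\overline{V_0}^{\ast}$ and \Cref{thm:DF'Fredholm} for the Fredholm property of $\mathcal{F}^{\prime\prime}(\bar{\omega})$. The additional remarks on the inherited norm and density of $V_0$ in $\overline{V_0}$ match the paper's (briefer) verification, so no further comment is needed.
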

\begin{proof}
	We verify that the assumptions in \Cref{thm:FredholmLoja} are satisfied for $V=V_0$, $U=\Omega_0$ $Z=\overline{V_0}$, $\varphi=\bar{u}$ and $\mathcal{E}=\mathcal{F}$. Density of $V_0\subset \overline{V_0}$ is trivial.
	
	Assumption (ii) in \Cref{thm:FredholmLoja} is satisfied by \Cref{thm:propertiesofF} (iii). Assumption (iii), i.e. the Fredholm property of $\mathcal{F}^{\prime\prime}(\bar{\omega})\colon V_0 \to \overline{V_0}^{\ast}$, holds by \Cref{thm:DF'Fredholm}. Hence, $\mathcal{F}$ satisfies a {\L}ojasiewicz--Simon gradient inequality in a neighborhood of $\bar{\omega}$ by \Cref{thm:FredholmLoja}.
\end{proof}
We are finally able to prove our main result.
\begin{proof}[Proof of {\Cref{thm:main}}]
	Suppose $\mathcal{E}, \mathcal{G}$ and $\bar{u}\in \mathcal{M} = \{u\in \mathcal{M}\mid \mathcal{G}(u)=0\}$ satisfy the assumptions of \Cref{thm:main}. Suppose $\bar{u}$ is a constraint critical point in the sense of \Cref{prop:CharaConstCritPoint}. By \Cref{thm:localgraph}, \Cref{rem:Mmanifold} and \Cref{prop:propertiesTuM}, $\mathcal{M}$ is locally a manifold near $\bar{u}$ with codimension $m$. Let $\varphi\colon\Omega_0\to \Omega\cap \mathcal{M}$ be the chart from \Cref{thm:localgraph} centered at $\bar{u}$ with $\varphi(\bar{\omega})=\bar{u}$. Recall from \Cref{prop:CharaConstCritPoint} that $\mathcal{E}^{\prime}(\bar{u})v=0$ for all $v\in \mathcal{T}_{\bar{u}}\mathcal{M} = \Image\varphi^{\prime}(\bar{\omega})$ by \Cref{prop:propertiesTuM} (i).
	Then, for any $v\in V_0$, using \Cref{thm:propertiesofF} and $P(\bar{u})v = v$ we have 
	\begin{align*}
	\mathcal{F}^{\prime}(\bar{\omega}) v &= P(\bar{u})^{\ast}\circ \overline{\varphi^{\prime}(\bar{\omega})}^{\ast} \circ \mathcal{E}^{\prime}(\bar{u})v = \overline{\varphi^{\prime}(\bar{\omega})}^{\ast}\circ \mathcal{E}^{\prime}(\bar{u}) P(\bar{u}) v  \\
	&= \overline{\varphi^{\prime}(\bar{\omega})}^{\ast}\circ \mathcal{E}^{\prime}(\bar{u})v= \mathcal{E}^{\prime}(\bar{u}) \varphi^{\prime}(\bar{\omega}) v=0.
	\end{align*}
	Hence, by \Cref{thm:lojacomposed}, there exist $C, \sigma^{\prime}>0$ and $\theta\in(0,\frac{1}{2}]$ such that $\mathcal{F}$ satisfies a {\L}ojasiewicz--Simon gradient inequality at $\bar{\omega}$. By \Cref{lem:FLojaIffE_MLoja} the claim follows.
\end{proof}
\section{The Hilbert space framework}\label{sec:HilbertSpace}
In the setting where $Y=Y^{\ast}=H$ is a Hilbert space, the assumptions in \Cref{thm:main} can be characterized in a simpler way in terms of the $H$-gradients.
\begin{defnlem}\label{def:Hgrad}
	Let $V$ be a Banach space and let $(H, \langle\cdot, \cdot\rangle)$ be a Hilbert space such that $V\hookrightarrow H$ densely, so $H\hookrightarrow V^{\ast}$. Suppose $U\subset V$ is an open set and $\mathcal{E}\in \mathcal{C}^1(U;\R)$. If $\mathcal{E}^{\prime}(u) \in H$ under the identification of $H$ with its image in $V^{\ast}$, we say that $\mathcal{E}$ possesses an \emph{$H$-gradient at $u\in U$} and we write $\nabla\mathcal{E}(u)\defeq \mathcal{E}^{\prime}(u)\in H$. This means precisely that
	\begin{align}\label{eq:Hgrad}
	\mathcal{E}^{\prime}(u) v = \langle \nabla\mathcal{E}(u), v\rangle \text{ for all } v\in V,
	\end{align}
	i.e. $\mathcal{E}^{\prime}(u)\in V^{\ast} = \mathcal{L}(V,\R)$ extends to $\overline{\mathcal{E}^{\prime}(u)}\in \mathcal{L}(H,\R)$ via  \eqref{eq:Hgrad}. Thus, $\overline{\mathcal{E}^{\prime}(u)} = \nabla\mathcal{E}(u)$ under the isomorphism $H\cong H^{\ast}$ given by the Riesz--Fréchet Theorem.
\end{defnlem}

\begin{cor}\label{cor:main hilbert}
	Let $V$ be a Hilbert space, $U\subset V$ be an open set, $m\in\N$ and let $\mathcal{E}\in\mathcal{C}^{\omega}(U;\R), \mathcal{G}\in \mathcal{C}^{\omega}(U;\R^m)$. Let $\bar{u}\in U$ and suppose that
	\begin{enumerate}[(i)]
		\item there exists a Hilbert space $(H, \langle\cdot, \cdot\rangle)$ with $V\hookrightarrow H$ densely, 
		\item $\mathcal{E}$ possesses an $H$-gradient $\nabla\mathcal{E}(u)$ at each $u\in U$ and the map $u\mapsto \nabla\mathcal{E}(u)\colon U\to H$ is analytic,
		\item the second derivative $\mathcal{E}^{\prime\prime}(\bar{u}) =(\nabla\mathcal{E})^{\prime}(\bar{u}) \colon V\to H$ is Fredholm of index zero,\footnote{The equation $\mathcal{E}^{\prime\prime}(\bar{u}) = (\nabla\mathcal{E})^{\prime}(u)$ has to be understood in the sense of the identification $\mathcal{E}^{\prime}(u) =  \nabla \mathcal{E}(u)$, cf. \Cref{def:Hgrad}.}
		\item for any $u\in U$, the components $\mathcal{G}_k\colon U\to\R$ of $\mathcal{G}$ possess $H$-gradients $\nabla\mathcal{G}_k$ such that $U\ni u\mapsto \nabla\mathcal{G}_k(u)\in H$ is analytic for all $k=1, \dots, m$,
		\item the Fréchet derivatives $(\nabla\mathcal{G}_k)^{\prime}(\bar{u})\colon V\to H$ are compact for all $k=1,\dots, m$,
		\item $\mathcal{G}(\bar{u})=0$ and the $H$-gradients $\nabla\mathcal{G}_1(\bar{u}), \dots, \nabla\mathcal{G}_m(\bar{u})$ are linearly independent.
	\end{enumerate}
	Then, $\mathcal{M} \defeq \{ u\in U\mid \mathcal{G}(u)=0\}$ is locally an analytic submanifold of $V$ of codimension $m$ near $\bar{u}$. 
	
	If $\bar{u}$ is a critical point of $\mathcal{E}\vert_{\mathcal{M}}$, then the restriction satisfies a refined {\L}ojasiewicz--Simon gradient inequality at $\bar{u}$, i.e. there exist $C, \sigma>0$ and $\theta\in (0,\frac{1}{2}]$ such that for any $u\in \mathcal{M}$ with $\Norm{u-\bar{u}}{V}\leq \sigma$, we have
	\begin{align}\label{eq:LSHilbert}
	\abs{\mathcal{E}(u)-\mathcal{E}(\bar{u})}^{1-\theta}\leq C \Norm{P(u)\nabla\mathcal{E}(u)}{H},
	\end{align}
	where $P(u)\colon H\to H$ is the orthogonal projection onto $\overline{\mathcal{T}_u\mathcal{M}} \defeq\overline{\mathcal{T}_u\mathcal{M}} ^{\Norm{\cdot}{H}}$.
\end{cor}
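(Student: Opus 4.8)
The plan is to deduce \Cref{cor:main hilbert} from \Cref{thm:main} by taking $Y=H$, checking hypotheses (i)--(vi) of \Cref{thm:main} in the Hilbert space situation, and then rewriting the conclusion \eqref{eq:LSnonlin} in the form \eqref{eq:LSHilbert} via the Riesz--Fréchet identification $H\cong H^{\ast}$. Note that the analyticity of $\mathcal{E}\colon U\to\R$ and $\mathcal{G}\colon U\to\R^m$ required by \Cref{thm:main} is assumed outright, and "critical point of $\mathcal{E}\vert_{\mathcal M}$" is the notion from \Cref{prop:CharaConstCritPoint}.

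First I would note that (i) of \Cref{thm:main} is (i) of the corollary. For (ii), by \Cref{def:Hgrad} possessing an $H$-gradient means exactly that $\mathcal{E}^{\prime}(u)$ extends to $\overline{\mathcal{E}^{\prime}(u)} = R\,\nabla\mathcal{E}(u)\in H^{\ast}=Y^{\ast}$, where $R\colon H\to H^{\ast}$ is the Riesz isomorphism; since $R$ is bounded linear, hence analytic (\Cref{ex:multilinAnalytic}), and $u\mapsto\nabla\mathcal{E}(u)$ is analytic into $H$ by hypothesis (ii), \Cref{thm:companalytic} shows $\mathcal{E}^{\prime}\colon U\to Y^{\ast}$ is analytic. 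Differentiating $\mathcal{E}^{\prime}(u)=R\,\nabla\mathcal{E}(u)$ at $\bar{u}$ (cf. the footnote in the statement) gives $\mathcal{E}^{\prime\prime}(\bar{u}) = R\circ(\nabla\mathcal{E})^{\prime}(\bar{u})\colon V\to Y^{\ast}$; as $R$ is an isomorphism and $(\nabla\mathcal{E})^{\prime}(\bar{u})$ is Fredholm of index zero by hypothesis (iii), \Cref{thm:FredholmIndexFormula} yields that $\mathcal{E}^{\prime\prime}(\bar{u})$ is Fredholm of index zero, which is (iii) of \Cref{thm:main}.

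The step requiring the most bookkeeping is the translation of the component-wise gradient data for $\mathcal{G}$ into the operator-valued conditions (iv) and (v). Writing $\mathcal{G}^{\prime}(u)v = (\langle\nabla\mathcal{G}_1(u),v\rangle,\dots,\langle\nabla\mathcal{G}_m(u),v\rangle)$, the operator $\mathcal{G}^{\prime}(u)$ extends to $\overline{\mathcal{G}^{\prime}(u)}\colon H\to\R^m$, $\overline{\mathcal{G}^{\prime}(u)}y = (\langle\nabla\mathcal{G}_k(u),y\rangle)_{k=1}^m$. Introducing the bounded (hence analytic) linear map $\Phi\colon H^m\to\mathcal{L}(H,\R^m)$, $\Phi(h_1,\dots,h_m)y = (\langle h_k,y\rangle)_{k=1}^m$, we have $\overline{\mathcal{G}^{\prime}} = \Phi\circ(\nabla\mathcal{G}_1,\dots,\nabla\mathcal{G}_m)$, which is analytic by hypothesis (iv) and \Cref{thm:companalytic}, and $(\overline{\mathcal{G}^{\prime}})^{\prime}(\bar{u}) = \Phi\circ\big((\nabla\mathcal{G}_1)^{\prime}(\bar{u}),\dots,(\nabla\mathcal{G}_m)^{\prime}(\bar{u})\big)$ is compact since $\Phi$ is bounded and each $(\nabla\mathcal{G}_k)^{\prime}(\bar{u})$ is compact by hypothesis (v). For (vi), $\mathcal{G}(\bar{u})=0$ is assumed and $\overline{\mathcal{G}^{\prime}(\bar{u})}\colon H\to\R^m$ is surjective because its adjoint $(c_k)_k\mapsto\sum_k c_k\nabla\mathcal{G}_k(\bar{u})$ is injective by the linear independence assumption; since $V$ is dense in $H$, the linear subspace $\mathcal{G}^{\prime}(\bar{u})(V) = \overline{\mathcal{G}^{\prime}(\bar{u})}(V)$ of $\R^m$ is dense, and being a linear subspace it is closed, hence equals $\R^m$. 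Therefore \Cref{thm:main} applies: $\mathcal{M}$ is locally an analytic submanifold of codimension $m$ near $\bar{u}$, and when $\bar{u}$ is a critical point of $\mathcal{E}\vert_{\mathcal M}$ the inequality \eqref{eq:LSnonlin} holds with $Y=H$.

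Finally I would rewrite \eqref{eq:LSnonlin}. For $u\in\mathcal{M}$ the space $\overline{\mathcal{T}_u\mathcal{M}}$ is a closed subspace of the Hilbert space $H$, and for $y\in\overline{\mathcal{T}_u\mathcal{M}}$ one has $\overline{\mathcal{E}^{\prime}(u)}y = \langle\nabla\mathcal{E}(u),y\rangle = \langle P(u)\nabla\mathcal{E}(u),y\rangle$ because $(\Id_H-P(u))\nabla\mathcal{E}(u)$ is orthogonal to $\overline{\mathcal{T}_u\mathcal{M}}$. Hence $\Norm{\mathcal{E}^{\prime}(u)}{\overline{\mathcal{T}_u\mathcal{M}}^{\ast}} = \sup_{0\neq y\in\overline{\mathcal{T}_u\mathcal{M}}}\langle P(u)\nabla\mathcal{E}(u),y\rangle/\Norm{y}{H} = \Norm{P(u)\nabla\mathcal{E}(u)}{H}$, the supremum being attained at $y=P(u)\nabla\mathcal{E}(u)$ when the latter is nonzero and both sides vanishing otherwise. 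Substituting into \eqref{eq:LSnonlin} gives \eqref{eq:LSHilbert} with the same constants. Apart from this, the only subtlety is to keep track of the identifications $H\cong H^{\ast}$ implicit in the $H$-gradient and to observe that analyticity and compactness survive composition with the fixed bounded operators $R$ and $\Phi$, which follows from \Cref{thm:companalytic}, \Cref{ex:multilinAnalytic} and the results of \Cref{sec:ProofMain}.
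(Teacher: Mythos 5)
Your proposal is correct and follows essentially the same route as the paper: apply \Cref{thm:main} with $Y=H$ (identifying $H\cong H^{\ast}$), verify hypotheses (iv)--(vi) via the componentwise gradients and the linear-independence/surjectivity equivalence (the paper isolates this as \Cref{lem:G'surjective}), and then convert $\Norm{\mathcal{E}^{\prime}(u)}{\overline{\mathcal{T}_u\mathcal{M}}^{\ast}}$ into $\Norm{P(u)\nabla\mathcal{E}(u)}{H}$ using the orthogonal projection. You spell out the Riesz-map and composition bookkeeping that the paper leaves implicit, and you even note the norm identity is an equality where the paper only needs the inequality; no gaps.
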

\begin{rem}
	Requiring $V$ to be a Hilbert space in \Cref{cor:main hilbert} is no additional assumption. Indeed, if hypothesis (iii) in \Cref{cor:main hilbert} is satisfied for $V$ merely a Banach space, $\mathcal{E}^{\prime\prime}(\bar{u})\colon V\to H$ is a compact perturbation of an isomorphism by \cite[Theorem 7.10]{AppellVaeth05}. In particular, $V$ and $H$ are isomorphic, so $V$ has to be a Hilbert space.
\end{rem}

\begin{rem}\label{rem:HilbertSpace m=1}
	In the case $m=1$ in \Cref{cor:main hilbert}, the projection $P(u)\in \mathcal{L}(H)$ onto $\overline{\mathcal{T}_u\mathcal{M}}$ is given  by
	$P(u) y = y - \frac{\langle\nabla\mathcal{G}(u), y\rangle}{\Norm{\nabla\mathcal{G}(u)}{H}^2}\nabla\mathcal{G}(u)$, where $\nabla\mathcal{G}\defeq\nabla\mathcal{G}_1$. This yields 
	\begin{align}\label{eq:LojaHilbertLambda}
	\abs{\mathcal{E}(u)-\mathcal{E}(\bar{u})}^{1-\theta}\leq  C \Big\Vert\nabla\mathcal{E}(u) - \frac{\langle\nabla\mathcal{G}(u), \nabla\mathcal{E}(u)\rangle}{\Norm{\nabla\mathcal{G}(u)}{H}^2} \nabla\mathcal{G}(u)\Big\Vert_H.
	\end{align}
	The scalar $\lambda(u) \defeq \frac{\langle\nabla\mathcal{G}(u), \nabla\mathcal{E}(u)\rangle}{\Norm{\nabla\mathcal{G}(u)}{}^{2}}$ is often referred to as the \emph{Langrange multiplier}, since if the right hand side of \eqref{eq:LojaHilbertLambda} is zero, $\lambda(u)$ is exactly the Lagrange multiplier for the function $\mathcal{E}$ subject to the constraint $\mathcal{G}(u)=0$ (cf. \cite[Chapter 2]{GiaquintaHildebrandt}).
\end{rem}
The following shows that assumption (vi) in \Cref{cor:main hilbert} is just the equivalent formulation of hypothesis (vi) in \Cref{thm:main}.
\begin{lem}\label{lem:G'surjective}
	Let $V$ be a Banach space and let $(H, \langle\cdot, \cdot\rangle)$ be a Hilbert space such that $V\hookrightarrow H$ densely. Let $U\subset V$ open, $u\in U$ and suppose $\mathcal{G}\in \mathcal{C}^1(U;\R^m)$ possesses $H$-gradients $\nabla\mathcal{G}_1(u),\ldots, \nabla\mathcal{G}_m(u)$ in the sense of \Cref{def:Hgrad}. Then the following are equivalent.
	\begin{enumerate}[(i)]
		\item $\mathcal{G}^{\prime}(u)\colon V\to \R^m$ is surjective,
		\item $\nabla\mathcal{G}_1(u), \ldots,\nabla\mathcal{G}_m(u)$ are linearly independent in  $H$.
	\end{enumerate}
\end{lem}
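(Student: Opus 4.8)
The plan is to prove the equivalence directly by unwinding the definition of the $H$-gradients and using that $\mathcal{G}^{\prime}(u)v = (\langle\nabla\mathcal{G}_1(u),v\rangle,\dots,\langle\nabla\mathcal{G}_m(u),v\rangle)$ for all $v\in V$. The statement is essentially linear algebra once this identification is made, so the main work is handling the density of $V$ in $H$ carefully, since the $\nabla\mathcal{G}_k(u)$ live in $H$ but we only test against $v\in V$.

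\textbf{Proof of \Cref{lem:G'surjective}.} Write $g_k\defeq\nabla\mathcal{G}_k(u)\in H$ for $k=1,\dots,m$, so that by \eqref{eq:Hgrad} we have $\mathcal{G}^{\prime}(u)v = \big(\langle g_1,v\rangle,\dots,\langle g_m,v\rangle\big)$ for every $v\in V$.

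First I would show that (ii) implies (i) by contraposition. If $\mathcal{G}^{\prime}(u)\colon V\to\R^m$ is not surjective, its image is a proper subspace of $\R^m$, hence there exists $0\neq c = (c_1,\dots,c_m)\in\R^m$ with $c\cdot\mathcal{G}^{\prime}(u)v = 0$ for all $v\in V$, i.e.
\begin{align*}
\Big\langle \sum_{k=1}^m c_k g_k, v\Big\rangle = 0\quad\text{for all }v\in V.
\end{align*}
Since $V$ is dense in $H$, this forces $\sum_{k=1}^m c_k g_k = 0$ in $H$, so the $g_k$ are linearly dependent, contradicting (ii). Hence (ii) $\Rightarrow$ (i).

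Conversely, I would show (i) implies (ii), again by contraposition. Suppose the $g_k$ are linearly dependent, say $g_m = \sum_{k=1}^{m-1}\lambda_k g_k$ for some $\lambda_k\in\R$ (after relabeling). Then for every $v\in V$,
\begin{align*}
\langle g_m, v\rangle = \sum_{k=1}^{m-1}\lambda_k\langle g_k,v\rangle,
\end{align*}
so the $m$-th component of $\mathcal{G}^{\prime}(u)v$ is always the corresponding linear combination of the first $m-1$ components; thus $\Image\mathcal{G}^{\prime}(u)$ is contained in the proper subspace $\{y\in\R^m : y_m = \sum_{k=1}^{m-1}\lambda_k y_k\}$ and $\mathcal{G}^{\prime}(u)$ is not surjective. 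This establishes (i) $\Rightarrow$ (ii) and completes the proof. \qed

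I do not expect any serious obstacle here: the only subtlety is that the equation $\langle\sum c_k g_k,\cdot\rangle = 0$ holds a priori only on the dense subspace $V\subset H$, not on all of $H$, and one must invoke density (and continuity of the inner product) to conclude $\sum c_k g_k = 0$ in $H$. Everything else is finite-dimensional linear algebra in $\R^m$.
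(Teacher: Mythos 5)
Your proof is correct and follows essentially the same argument as the paper: both rest on the identification $\mathcal{G}^{\prime}(u)v = (\langle \nabla\mathcal{G}_1(u),v\rangle,\dots,\langle\nabla\mathcal{G}_m(u),v\rangle)$, the fact that surjectivity onto $\R^m$ is equivalent to no nonzero vector annihilating the image, and density of $V$ in $H$ to pass from vanishing on $V$ to vanishing in $H$. The only difference is that you phrase both implications contrapositively while the paper argues directly with $\left(\Image\mathcal{G}^{\prime}(u)\right)^{\perp_{\R^m}}$, which is an immaterial repackaging.
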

\begin{proof}
	Assume (i) holds and let $\lambda\in \R^m$ be such that $\sum_{k=1}^m \lambda_k \nabla\mathcal{G}_k(u)=0$ in $H$. Then, for any $v\in V\subset H$ we have
	$0 = \sum_{k=1}^m \lambda_k \langle \nabla\mathcal{G}_k(u),v\rangle = \langle\lambda, \mathcal{G}^{\prime}(u)v\rangle_{\R^m}$ by \Cref{def:Hgrad}. Hence, $\lambda\in \left(\Image \mathcal{G}^{\prime}(u)\right)^{\perp_{\R^m}} = \{0\}$ by (i). Conversely, suppose (ii) holds and $\lambda\in \left(\Image \mathcal{G}^{\prime}(u)\right)^{\perp_{\R^m}}$. Then, we have
	\begin{align*}
	0 = \langle\lambda, \mathcal{G}^{\prime}(u)v\rangle_{\R^m} =\sum_{k=1}^m \lambda_k \langle\nabla\mathcal{G}_k(u), v\rangle = \left\langle\sum_{k=1}^m\lambda_k\nabla\mathcal{G}_k(u),v \right\rangle \text{ for all } v\in V.
	\end{align*}
	As a consequence, $\sum_{k=1}^m \lambda_k \nabla\mathcal{G}_k(u) = 0$ in $H$ by density of $V\subset H$, thus $\lambda=0$ by (ii).
\end{proof}
\begin{proof}[{Proof of \Cref{cor:main hilbert}}]
	Assumptions (i)-(iv) of \Cref{thm:main} are satisfied if we choose $Y=Y^{\ast}=H$ under the identification of $H$ with its image in $V^{\ast}$. Note that the extension of $\mathcal{G}^{\prime}(u)$ is given by
	\begin{align}\label{eq:extensionNablaG}
	\overline{\mathcal{G}^{\prime}(u)} y = \left(\langle \nabla\mathcal{G}_1(u), y\rangle, \dots,  \langle\nabla\mathcal{G}_m(u),y\rangle \right)^T \text{ for }y\in H.
	\end{align} 
	Thus, assumption (v) of \Cref{thm:main} is satisfied if and only if $(\nabla\mathcal{G}_k)^{\prime}(\bar{u})\colon V\to H$ is compact for all $k=1,\dots,m$ which is exactly assumption (v) in \Cref{cor:main hilbert}. By \Cref{lem:G'surjective}, assumption (vi) in \Cref{thm:main} is also satisfied. We conclude that there exists $C, \sigma>0$ and $\theta\in (0,\frac{1}{2}]$ such that for any $u\in \mathcal{M}$ with $\Norm{u-\bar{u}}{V}\leq \sigma$, we have
	\begin{align}\label{eq:BewCor1.7LS}
	\abs{\mathcal{E}(u)-\mathcal{E}(\bar{u})}^{1-\theta}\leq C \Norm{\mathcal{E}^{\prime}(u)}{\overline{\mathcal{T}_u\mathcal{M}}^{\ast}}.
	\end{align}
	By \Cref{prop:propertiesTuM}, $\overline{\mathcal{T}_u\mathcal{M}} = \ker\overline{\mathcal{G}^{\prime}(u)}$. 
	Consequently, by \eqref{eq:extensionNablaG}, we have 
	\begin{align*}
	\overline{\mathcal{T}_u\mathcal{M}} = \left\{ y\in H\mid \langle \nabla\mathcal{G}_k(u),y\rangle = 0 \text{ for all }k=1, \dots, m\right\}.
	\end{align*}
	Hence, if $P(u)\in \mathcal{L}(H)$ denotes the orthogonal projection onto $\overline{\mathcal{T}_u\mathcal{M}}\subset H$ we can estimate the right hand side of \eqref{eq:BewCor1.7LS} by
	\begin{align*}
	\Norm{\mathcal{E}^{\prime}(u)}{\overline{\mathcal{T}_u\mathcal{M}}^{\ast}} &= \sup_{0\neq y\in \overline{\mathcal{T}_u\mathcal{M}}}\frac{\mathcal{E}^{\prime}(u)y}{\Norm{y}{H}} = \sup_{0\neq y\in \overline{\mathcal{T}_u\mathcal{M}}} \frac{\langle\nabla\mathcal{E}(u),P(u)y\rangle}{\Norm{y}{H}}\\&
	= \sup_{0\neq y\in \overline{\mathcal{T}_u\mathcal{M}}} \frac{\langle P(u)\nabla\mathcal{E}(u),y\rangle}{\Norm{y}{H}} \leq \Norm{P(u)\nabla\mathcal{E}(u)}{H}.
	\end{align*}
	Together with \eqref{eq:BewCor1.7LS}, this proves \eqref{eq:LSHilbert}.
\end{proof}

In the setting of \Cref{cor:main hilbert}, we may deduce the following abstract convergence result for the associated gradient flow.
\begin{cor}\label{cor:ConvergenceTheorem}
	Let $\CalE, \mathcal{G}$ be as in \Cref{cor:main hilbert} with $m=1$ and suppose $u \in \CalC^1([0,\infty);V)$ is a solution of the \emph{constrained gradient flow equation}
	\begin{align}\label{eq:constrGradFlow}
	\left\{\begin{array}{lll}
	\partial_t u &= - \nabla\mathcal{E}(u)+\lambda(u)\nabla\mathcal{G}(u), & t>0\\
	u(0)&=u_0,&
	\end{array}\right.,
	\end{align}
	where $\lambda(u)$ is as in \eqref{eq:LojaHilbertLambda}. Assume that $\overline{\{u(t)\mid t\geq 0\}}\subset V$ is compact. Then $\lim_{t\to\infty}u(t)$ exists in $V$.
\end{cor}
\begin{rem}
	The key idea in the proof of \Cref{cor:ConvergenceTheorem}  is the following (formal) computation, based on \cite{Simon83}. If $\CalE\vert_{\mathcal{G}^{-1}\{0\}}$ satisfies a refined {\L}ojasiewicz--Simon gradient inequality near $\bar{u}\in \overline{\{u(t)\mid t\geq 0\}}$, then
	\begin{align*}
	-\frac{\diff}{\diff t} \left(\CalE(u)- \CalE(\bar{u})\right)^{\theta} &= -\theta \left(\CalE(u)-\CalE(\bar{u})\right)^{\theta-1} \langle \nabla\CalE(u), \partial_t u\rangle \\
	&=\theta \left(\CalE(u)-\CalE(\bar{u})\right)^{\theta-1} \norm{\nabla\CalE(u)-\lambda(u)\nabla\mathcal{G}(u)}{H}\norm{\partial_t u}{H} \\
	&\geq \frac{\theta}{C}\norm{\partial_t u}{H}.
	\end{align*}
	This implies $\partial_t u \in L^1([0, \infty);H)$ which yields the claim (see \cite[Theorem 12.2]{ChillISEM} for a detailed presentation of this argument, with weaker regularity assumptions).
\end{rem}

\section{Optimality discussion}\label{sec:Optimality}
In this section, we will discuss why the assumptions in \Cref{thm:main} and \Cref{cor:main hilbert} cannot be omitted.

First, we provide an example, inspired by the Hilbert space case in \cite[Theorem 2.1]{Haraux11}, which implies that in any Banach space of infinite dimension, there will exist an energy which fails to satisfy the {\L}ojasiewicz--Simon gradient inequality. The construction relies on the following nontrivial fact.
\begin{thm}\label{thm:ExistBasicSeq}
	Let $V$ be a Banach space of infinite dimension and let $\varepsilon>0$. Then there exist sequences $(e_n)_{n\in\N}\subset V$ with $\norm{e_n}{}=1$ for all $n\in \N$ and $(\phi_k)_{k\in \N}\subset V^{\ast}$ with $\norm{\phi_k}{}\leq  2(1+\varepsilon)$ for all $k\in \N$ such that 
	\begin{align*}
	\phi_k(e_n)=\delta_{k,n}, \quad\text{ for all }k,n\in \N.
	\end{align*}
\end{thm}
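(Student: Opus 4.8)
The statement is a quantitative, purely functional-analytic fact: in an infinite-dimensional Banach space one can find a normalized sequence $(e_n)$ together with a uniformly bounded biorthogonal system $(\phi_k)\subset V^\ast$. The natural strategy is an inductive construction of $(e_n)$ as a \emph{basic sequence} with small basis constant, and then to extract the biorthogonal functionals from the coordinate functionals of the closed span, extending them to all of $V$ by Hahn--Banach while controlling the norm. The key classical input is the \emph{Mazur technique} for constructing basic sequences: given a finite-dimensional subspace $F\subset V$ and any $\eta>0$, there exists a norm-one vector $e\in V$ such that $\norm{f}{}\le (1+\eta)\norm{f+\lambda e}{}$ for all $f\in F$ and all scalars $\lambda$. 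This uses only that $V$ is infinite-dimensional (so that the unit sphere of $V$ is not contained in any finite union of small balls around the points needed to norm a compact net of the unit sphere of $F^\ast$, via a standard compactness argument).

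First I would fix a summable sequence of tolerances, say $\eta_n>0$ with $\prod_{n}(1+\eta_n)\le 1+\varepsilon$. Then, inductively, having chosen $e_1,\dots,e_{n-1}$, apply the Mazur lemma with $F=\operatorname{span}\{e_1,\dots,e_{n-1}\}$ and tolerance $\eta_{n-1}$ to obtain a norm-one $e_n$ with the stated domination property. A telescoping estimate then shows that $(e_n)$ is a basic sequence with basis constant at most $1+\varepsilon$; concretely, for the closed linear span $W\defeq \overline{\operatorname{span}}\{e_n\}$ the canonical projections $P_N\colon W\to\operatorname{span}\{e_1,\dots,e_N\}$, $P_N\big(\sum a_je_j\big)=\sum_{j\le N}a_je_j$, are well-defined and satisfy $\norm{P_N}{\mathcal L(W)}\le 1+\varepsilon$. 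Consequently the coordinate functionals $\tilde\phi_k\defeq (P_k-P_{k-1})/e_k$ on $W$ (i.e. the functionals with $\tilde\phi_k\big(\sum a_je_j\big)=a_k$) are bounded with $\norm{\tilde\phi_k}{W^\ast}\le \norm{P_k}{}+\norm{P_{k-1}}{}\le 2(1+\varepsilon)$ and satisfy $\tilde\phi_k(e_n)=\delta_{k,n}$.

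Finally, I would invoke the Hahn--Banach theorem to extend each $\tilde\phi_k$ to a functional $\phi_k\in V^\ast$ with $\norm{\phi_k}{V^\ast}=\norm{\tilde\phi_k}{W^\ast}\le 2(1+\varepsilon)$; the biorthogonality relations $\phi_k(e_n)=\delta_{k,n}$ are preserved since $e_n\in W$. This yields exactly the sequences claimed in the statement.

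\textbf{Main obstacle.} The only genuinely nontrivial point is the Mazur selection lemma producing, at each step, a norm-one vector almost orthogonal (in the asymmetric sense above) to a given finite-dimensional subspace; this is where infinite-dimensionality is essential and where a careful $\varepsilon$-net plus compactness argument on the unit sphere of the dual of the finite-dimensional space is needed. Everything after that — the telescoping bound on the basis constant, the identification and norm control of the coordinate functionals, and the Hahn--Banach extension — is routine. I would also take care to track the constants so that the final bound is the stated $2(1+\varepsilon)$ rather than something weaker, which is why the per-step tolerances $\eta_n$ must be chosen with $\prod_n(1+\eta_n)\le 1+\varepsilon$ at the outset.
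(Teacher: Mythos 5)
Your proposal is correct and follows essentially the same route as the paper: the paper cites \cite[Corollary 1.5.3]{AlbiacKalton06} for the existence of a normalized basic sequence with basis constant $K\le 1+\varepsilon$ (which is itself proved there by exactly the Mazur almost-orthogonality construction you sketch), and then, as you do, bounds the coordinate functionals by the telescoping identity $\phi_k(x)e_k = S_kx-S_{k-1}x$ to get $\norm{\phi_k}{}\le 2K\le 2(1+\varepsilon)$ and extends them to $V$ by Hahn--Banach. The only difference is that you prove the black-boxed ingredient rather than citing it; the remaining steps coincide with the paper's argument.
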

\begin{proof}
	See \Cref{sec:BasicSeq}.
\end{proof}
\begin{example}\label{ex:LojaFailGenericBS}
	Let $V$ be a Banach space of infinite dimension and $\varepsilon>0$. Let $(e_n)_{n\in N}$ and $(\phi_k)_{k\in \N}$ be as in \Cref{thm:ExistBasicSeq}. Let $\lambda\in \ell^{1}(\N)$ with $\lambda_k\neq 0$ for all $k\in \N$. Then, $x=0$ is a critical point of the analytic energy
	\begin{align*}
	\mathcal{E}(x) \defeq \frac{1}{2} \sum_{k=1}^{\infty} \lambda_k \abs{\phi_k(x)}^2, \text{ for } x\in V,
	\end{align*}
	but $\mathcal{E}$ satisfies no {L}ojasiewicz--Simon gradient inequality around $x=0$.
\end{example}
\begin{proof}
	First, we will prove that $\mathcal{E}$ is analytic. Indeed, we have $\mathcal{E}(x) = \Phi(x,x)$, where $\Phi(x,y) \defeq \sum_{k=1}^{\infty}\lambda_k \phi_k(x)\phi_k(y)$ for $x,y \in V$. Note that $\Phi \colon V\times V\to \R$ is bilinear and bounded by
	\begin{align*}
	\abs{\Phi(x,y)}\leq \beta^2 \sum_{k=1}^{\infty}\abs{\lambda_k} \norm{x}{}\norm{y}{}\text{ for all }x,y \in V,
	\end{align*}
	where $\beta\defeq 2(1+\varepsilon)$.
	By \Cref{ex:multilinAnalytic}, $\Phi$ and hence $\mathcal{E}$ is analytic. Furthermore, we find
	\begin{align*}
	\mathcal{E}^{\prime}(x)(y) = \sum_{k=1}^{\infty} \lambda_k \phi_k(x)\phi_k(y) \text{ for all }x,y \in V.
	\end{align*}
	Clearly, $\mathcal{E}^{\prime}(0)=0$. For $k\in \N$ and $t>0$ we have
	\begin{align*}
	\Norm{\mathcal{E}^{\prime}(te_n)}{V^{\ast}} = \sup_{\norm{y}{V}=1} \sum_{k=1}^{\infty} \lambda_k \phi_k(te_n) \phi_k(y) \leq t\abs{\lambda_n} \Norm{\phi_n}{V^{\ast}} \leq \beta t \abs{\lambda_n}
	\end{align*}
	and $\mathcal{E}(t e_n) = \frac{1}{2}\lambda_n t^2$. Thus, if $\mathcal{E}$ satisfied a {\L}ojasiewicz--Simon gradient inequality for some $C, \sigma, \theta>0$, for all $n\in\N$ and $0<t\leq \sigma$ we would get
	\begin{align*}
	\left(\frac{1}{2}\abs{\lambda_n} t^2\right)^{1-\theta} \leq C\beta \abs{\lambda_n} t.
	\end{align*}
	Dividing by $\abs{\lambda_n}$ and letting $n \to\infty$ yields a contradiction, since $\lambda_n\to 0$ as $\lambda\in \ell^1(\N)$.
\end{proof}
It is not too difficult to see that the second derivative of $\CalE$ in \Cref{ex:LojaFailGenericBS} fails to be Fredholm. Consequently, condition (iii) in \Cref{thm:FredholmLoja} (and \Cref{thm:main}) is violated, whereas conditions (i)-(ii) are satisfied with $Z=H$ ($Y=H$, respectively), {indicating} that mere analyticity of the energy is not enough.
In fact, the following result shows that it is never sufficient in infinite dimensions.
\begin{cor}
	Let $V$ be a Banach space, and let $U\subset V$ be open. Then, $\dim V<\infty$ if and only if every analytic function $\mathcal{E}\in \mathcal{C}^{\omega}(U;\R)$ satisfies a {\L}ojasiewicz--Simon gradient inequality at each of its critical points.
\end{cor}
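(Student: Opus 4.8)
The plan is to handle the two implications separately; in both directions the substantive work has already been carried out earlier in the paper, so the task is to assemble the pieces and to deal with the fact that the critical point need not sit at a distinguished location. I will tacitly assume $U\neq\emptyset$ (if $U=\emptyset$ there are no critical points, the right-hand statement is vacuously true, and the equivalence degenerates).

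\emph{Direction ``$\dim V<\infty$ $\Rightarrow$ every analytic energy satisfies a {\L}ojasiewicz-Simon inequality at each of its critical points''.} I would fix a linear isomorphism $T\colon V\to\R^n$; it is automatically a homeomorphism since all norms on a finite-dimensional space are equivalent. Given $\mathcal{E}\in\mathcal{C}^{\omega}(U;\R)$ and a critical point $\bar u$, transport everything by $T$: then $\mathcal{E}\circ T^{-1}\in\mathcal{C}^{\omega}(T(U);\R)$ has a critical point at $T\bar u$, and \Cref{thm:LSfinitedim} yields $C,\sigma>0$ and $\theta\in(0,\frac{1}{2}]$ with $\abs{\mathcal{E}(u)-\mathcal{E}(\bar u)}^{1-\theta}\leq C\norm{\nabla(\mathcal{E}\circ T^{-1})(Tu)}{}$ on a small ball. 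Since $\mathcal{E}^{\prime}(u)$ differs from $\nabla(\mathcal{E}\circ T^{-1})(Tu)$ only through composition with the fixed operator $T$, and since all norms on the finite-dimensional spaces $V$ and $V^{\ast}$ are equivalent, this estimate becomes, after adjusting $C$ and $\sigma$, a {\L}ojasiewicz-Simon gradient inequality $\abs{\mathcal{E}(u)-\mathcal{E}(\bar u)}^{1-\theta}\leq C^{\prime}\Norm{\mathcal{E}^{\prime}(u)}{V^{\ast}}$ on a $V$-ball around $\bar u$. This direction is routine.

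\emph{Direction ``every analytic energy satisfies a {\L}ojasiewicz-Simon inequality at each of its critical points $\Rightarrow$ $\dim V<\infty$''.} I would argue by contraposition: assuming $\dim V=\infty$, I exhibit one analytic $\mathcal{E}$ on $U$ with a critical point at which no inequality holds. \Cref{ex:LojaFailGenericBS} provides exactly such an energy, but centered at the critical point $0$, whereas here $U$ is an arbitrary nonempty open set. So I first pick $\bar x\in U$ and $\rho>0$ with $B_\rho(\bar x)\subset U$, and then translate the construction: with $(e_n)_{n\in\N}$ and $(\phi_k)_{k\in\N}$ from \Cref{thm:ExistBasicSeq} (for some fixed $\varepsilon>0$) and a fixed $\lambda\in\ell^1(\N)$ with $\lambda_k>0$ for all $k$ (say $\lambda_k=2^{-k}$), set $\mathcal{E}(x)\defeq\frac{1}{2}\sum_{k=1}^{\infty}\lambda_k\abs{\phi_k(x-\bar x)}^2$ for $x\in V$. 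Analyticity on all of $V$, hence on $U$, and $\mathcal{E}^{\prime}(\bar x)=0$ follow verbatim from \Cref{ex:LojaFailGenericBS}; failure of the inequality is then checked on the curve $t\mapsto\bar x+te_n$, which lies in $U$ for $0<t\leq\rho$, using $\mathcal{E}(\bar x+te_n)=\frac{1}{2}\lambda_n t^2$ and $\Norm{\mathcal{E}^{\prime}(\bar x+te_n)}{V^{\ast}}\leq 2(1+\varepsilon)t\lambda_n$: a putative inequality would force $(\frac{1}{2}\lambda_n t^2)^{1-\theta}\leq 2(1+\varepsilon)C\lambda_n t$ for a fixed small $t$, and dividing by $\lambda_n$ and letting $n\to\infty$ contradicts $\lambda_n\to 0$, exactly as in the proof of \Cref{ex:LojaFailGenericBS}.

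I do not expect any genuine obstacle. The only point needing care is that \Cref{ex:LojaFailGenericBS} is formulated with the energy defined on all of $V$ and with critical point at the origin, so it must be placed inside the given open set $U$; the translation by $\bar x$ above achieves this without altering any of the estimates. Everything else reduces to a direct appeal to \Cref{thm:LSfinitedim} (finite-dimensional direction, together with the equivalence of all norms on $V$ and $V^{\ast}$) and to \Cref{ex:LojaFailGenericBS} (infinite-dimensional direction).
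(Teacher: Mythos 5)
Your proof is correct and follows essentially the same route the paper intends: the corollary is stated there as an immediate consequence of \Cref{thm:LSfinitedim} (finite-dimensional direction, via a linear identification with $\R^n$) and \Cref{ex:LojaFailGenericBS} (infinite-dimensional direction by contraposition). Your added care about translating the example's critical point into $U$, choosing $\lambda_k>0$ so the division is legitimate, and assuming $U\neq\emptyset$ just fills in the routine details the paper leaves implicit.
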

For the sake of simplicity, throughout the rest of this section we restrict ourselves to the Hilbert space case in \Cref{cor:main hilbert} with $V=H$. That way, assumptions {(i),} (ii) and (iv) are automatically satisfied if the energy and the constraint are analytic. The next example shows that we can not drop the compactness assumption (v) in \Cref{cor:main hilbert}.

\begin{example}\label{ex:NeedCompactnessG''}
	Consider the Hilbert space $H=\R\times\ell^2(\N)$ and let $\lambda\in \ell^1(\N)$. We write elements $x\in H$ as $x=(x_0, x^{\prime})$ with $x^{\prime}\in \ell^2(\N)$. The natural norm on $H$ is given by $\norm{x}{H}^2 \defeq \abs{x_0}^{2} + \norm{x^{\prime}}{\ell^2(\N)}^2$.
	For $x = (x_0, x^{\prime})\in H$ define 
	\begin{align*}
	\mathcal{E}(x) \defeq x_0 + \sum_{n=1}^{\infty} \abs{x_n^{\prime}}^2 \text{ for } x\in H.
	\end{align*}
	Then $\mathcal{E}$ satisfies assumptions (i)-(iii) in \Cref{cor:main hilbert} with $V=H$. We define 
	\begin{align*}
	\psi\colon \ell^2(\N)\to\R, \psi(x^{\prime})\defeq \sum_{n=1}^{\infty} (\lambda_n-1)\abs{x_n^{\prime}}^2
	\end{align*} and consider $\mathcal{G}\colon H\to \R,$ $\mathcal{G}(x) \defeq x_0-\psi(x^{\prime})$ and $\mathcal{M}\defeq \mathcal{G}^{-1}(\{0\})$. Then, $\mathcal{E}\vert_{\mathcal{M}}$ does not satisfy a refined {\L}ojasiewicz--Simon gradient inequality at the origin, but satisfies all assumptions of \Cref{cor:main hilbert} with $V=H$ except assumption (v).
\end{example}
\begin{proof} It is easy to see that $\mathcal{E}$, $\psi$ and $\mathcal{G}$ are analytic. {Given $\bar{x},y\in H$, a short computation yields $(\nabla \CalE)'(\bar x) y = (0, 2y')$, so the second derivative is Fredholm with index zero and the first part of the statement is proven.}
	
	Moreover, $\mathcal{G}$ possesses an $H$-gradient $ \nabla\mathcal{G}\left(x_0, x^{\prime}) = (1, \left(2(\lambda_n -1)x_n^{\prime}\right)_{n\in \N}\right)\in H$ and the gradient map is analytic. Also note that $\mathcal{G}(0)=0$ and $\nabla\mathcal{G}(0)=(1,0)$, so assumptions (iv) and (vi) of \Cref{cor:main hilbert} are satisfied. By \Cref{ex:graphManifold}, $\mathcal{M}$ is an analytic submanifold of $H$ near the origin, with a single chart $\varphi \colon \ell^2(\N)\to H$, $\varphi(x^{\prime}) \defeq (\psi(x^{\prime}), x^{\prime})\in H$ which coincides with the chart from \Cref{thm:localgraph} in this example.
	However, note that the operator $T\defeq (\nabla\mathcal{G})^{\prime}(0)\colon H\to H$, $(\nabla\mathcal{G})^{\prime}(0)(x_0, x^{\prime})=\left( 0, \left(2(\lambda_n-1)x^{\prime}_n\right)_{n\in \N}\right)$ is not compact. Indeed, let $x_k \defeq (0, e_k^{\prime})\in H$ where $e_k^{\prime}\in \ell^2(\N)$ is the standard $k$-th unit vector. A short computation yields $T x_k \rightharpoonup 0$, however $T x_k \not \to 0$, since $\norm{Tx_k}{H} = 2 \abs{\lambda_k-1} \to 1$, so $T$ cannot be compact. For $x^{\prime}\in \ell^2(\N)$ we have
	\begin{align*}
	(\mathcal{E}\circ \varphi)(x^{\prime}) &= \psi(x^{\prime}) + \sum_{n=1}^{\infty}\abs{x_n^{\prime}}^2 = \sum_{n=1}^{\infty}\lambda_n \abs{x_n^{\prime}}^2.
	\end{align*}
	Similar to \Cref{ex:LojaFailGenericBS}, one can show that $\mathcal{E}\circ\varphi$ does not satisfy a {\L}ojasiewicz--Simon gradient inequality at the origin $x^{\prime}=0$ by assuming the inequality holds and then testing it with $x^{\prime} = e^{\prime}_k \in \ell^2(\N)$ for all $k\in \N$. \Cref{lem:FLojaIffE_MLoja} then implies that $\mathcal{E}\vert_{\mathcal{M}}$ cannot satisfy a refined {\L}ojasiewicz--Simon gradient inequality at $x=0$ either.\footnote{At this point it is crucial that in \Cref{lem:FLojaIffE_MLoja} we did not require assumption (v) of \Cref{thm:main} to be satisfied.}
\end{proof}

\Cref{ex:NeedCompactnessG''} shows that assumption (v) in \Cref{cor:main hilbert} cannot be omitted. Note that while one can easily show {using  \Cref{thm:FredholmLoja}} that $\mathcal{E}$ as in \Cref{ex:NeedCompactnessG''} satisfies a {\L}ojasiewicz--Simon gradient inequality, $\mathcal{E}\vert_{\mathcal{M}}$ does not satisfy the refined inequality \eqref{eq:LSnonlin}. In particular, the property of satisfying a {\L}ojasiewicz--Simon gradient inequality does in general not behave well under the restriction to a submanifold, even if we assume finite codimension.

Let us  {also} 
remark that condition (vi) in \Cref{cor:main hilbert} is in general necessary to guarantee that $\mathcal{M}$ is a manifold. 
{Finally, note that our main result only considers submanifolds of finite codimension. The following example shows that our main result cannot be extended to the case of infinite codimension, even for linear subspaces.}
\begin{example}\label{ex:InfiniteCodimension}
	Consider the Hilbert space $H\defeq \ell^2(\N)\times \ell^2(\N)$ with
	\begin{align*}
	\langle(x,x'), (y,y')\rangle_H \defeq \langle x,y\rangle_{\ell^2(\N)} + \langle x',y'\rangle_{\ell^2(\N)}, \text{ for }(x,x'), (y,y')\in H.
	\end{align*}
	Then, the energy $\CalE\colon H \to\R, \CalE(x, x')\defeq \sum_{n=1}^\infty  \abs{x_n}^2-\abs{x_n'}^2$ satisfies assumptions (i)-(iii) in \Cref{cor:main hilbert} with $V=H$. For the constraint function $\mathcal{G}\colon H \to\ell^2(\N)$, $(x,x')\mapsto (x_n -\sqrt{1+n^{-2}}x_n')_{n\in\N}$, the set $\mathcal{M}\defeq\mathcal{G}^{-1}(\{0\})$ is a linear subspace of $H$ and $(x,x')=(0,0)$ is a constrained critical point of $\CalE\vert_{\mathcal{M}}$. Moreover, $\mathcal{G}$ and $\CalE$ are analytic, $\nabla\mathcal{G}(0,0)\neq 0$ and $0=\left(\nabla\mathcal{G}\right)'(0,0)\colon H \to H$ is compact. However, $\CalE\vert_{\mathcal{M}}$ does not satisfy a refined {\L}ojasiewicz--Simon gradient inequality near the origin.
\end{example} 
\begin{proof}
	The properties of $\CalE$ and $\mathcal{G}$ can be shown as in in \Cref{ex:NeedCompactnessG''}. {The natural chart for $\mathcal{M}$ is given by 
		\begin{align*}
		\varphi\colon \ell^2(\N)\to H, \varphi(y)\defeq \left( (\sqrt{1+n^{-2}} y_n)_{n\in \N}, (y_n)_{n\in\N}\right),
		\end{align*}
		and we observe that $(\CalE\circ \varphi)(y) = \sum_{n=1}^{\infty} \frac{1}{n^2}\abs{y_n}^2$ cannot satisfy a {\L}ojasiewicz--Simon gradient inequality near $x=0$ by \Cref{ex:LojaFailGenericBS}}. Hence, by an argument similar to \Cref{lem:FLojaIffE_MLoja}, neither does $\CalE\vert_{\mathcal{M}}$.
\end{proof}

\section{Applications}\label{sec:application}
{In this section, we will apply our result from \Cref{thm:main} to different energies on Sobolev spaces with \emph{isoperimetric constraints} (cf. \cite[Chapter 2.1]{GiaquintaHildebrandt}). Like in \Cref{cor:ConvergenceTheorem} this can be then used to conclude convergence for precompact solutions of the associated gradient flows.}
\subsection{Surface area with an isoperimetric constraint}\label{subsec:isoperimetric}
%

Throughout this subsection, we assume that $\Omega\subset \R^d$ is a domain with $\mathcal{C}^{1,1}$-boundary. We want to study the \emph{surface area} or $d$-dimensional Hausdorff measure of $\graph(u)\subset \R^{d+1}$ given by
\begin{align*}
\mathcal{E}(u) \defeq  \int_{\Omega}\sqrt{1+|\nabla u|^2}\diff x.
\end{align*}
Note that while this energy is already defined if we merely require $u\in W^{1,1}(\Omega)$, a natural space to study a $L^2$-gradient flow would be $W^{2,2}(\Omega)$. However, we consider $u\in V\defeq W^{2,p}(\Omega)\cap W^{1,p}_0(\Omega)$ with $d<p<\infty$ and $Y \defeq L^q(\Omega)$ where $\frac{1}{p}+\frac{1}{q}=1$. The condition on $p$ and our choice of spaces will imply analyticity (cf. \Cref{rem:HilbertBad}). We want to study  $\mathcal{E}$ on the set of functions which satisfy the constraint
\begin{align}\label{eq:DefConstraintG}
\mathcal{G}(u) = \int_{\Omega} g(u)\diff x = \int_{\Omega} g(u(x))\diff x = 0,
\end{align}
where $g\colon\R\to\R$ is an analytic function. Note that the energy as well as the constraint are well defined since $W^{2,p}(\Omega)$ embeds into both $W^{1,1}(\Omega)$ and $\mathcal{C}(\overline{\Omega})$ by \cite[Corollary 7.11]{GilbargTrudinger}. Moreover, $V\hookrightarrow L^q(\Omega)$ densely, so we get an induced embedding $L^p(\Omega)= Y^{\ast}\hookrightarrow V^{\ast}$. 

We recall the following important property of Nemytskii operators.
\begin{thm}[{\cite[Theorem 6.8]{Appell90}}]\label{thm:nemytskiiAnalytic}
	Let $F\in\mathcal{C}(\R)$. Then, the \emph{superposition operator} $\mathcal{F}\colon \mathcal{C}(\overline{\Omega})\to\mathcal{C}(\overline{\Omega})$, $\mathcal{F}(v) ={F}(v)$ is analytic if and only if the function $F$ is.
\end{thm}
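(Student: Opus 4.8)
Since the statement is quoted from \cite[Theorem 6.8]{Appell90}, in the paper I would simply cite it; what follows is a sketch of a direct argument. The ``only if'' direction is immediate from \Cref{thm:companalytic}: writing $\underline{c}\in\mathcal{C}(\overline{\Omega})$ for the constant function with value $c\in\R$, the embedding $\R\to\mathcal{C}(\overline{\Omega})$, $c\mapsto\underline{c}$, and the evaluation $\mathcal{C}(\overline{\Omega})\to\R$, $w\mapsto w(x_0)$ at a fixed $x_0\in\overline{\Omega}$, are bounded and linear, hence analytic by \Cref{ex:multilinAnalytic}. Since $F(c)=\mathcal{F}(\underline{c})(x_0)$, analyticity of $\mathcal{F}$ forces analyticity of $F$ as a composition of analytic maps.

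For the ``if'' direction I would fix $v_0\in\mathcal{C}(\overline{\Omega})$ and write down the candidate expansion \eqref{eq:analytic} of $\mathcal{F}$ around $v_0$ coming from the pointwise Taylor series of $F$ at $v_0(x)$: set $a_n(w_1,\dots,w_n)(x)\defeq \frac{1}{n!}F^{(n)}(v_0(x))\,w_1(x)\cdots w_n(x)$, a continuous symmetric $n$-linear map $\mathcal{C}(\overline{\Omega})^n\to\mathcal{C}(\overline{\Omega})$. The plan is then to show that $\sum_n\norm{a_n}{}\Norm{v-v_0}{\mathcal{C}(\overline{\Omega})}^n$ converges and that $\mathcal{F}(v)=\sum_n a_n(v-v_0)^n$ uniformly on $\overline{\Omega}$ for $v$ in a small ball around $v_0$. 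Both reduce to a uniform bound $\sup_{s\in K}\abs{F^{(n)}(s)}\leq M\,n!/r^n$, with $M,r>0$ independent of $n$, on the compact set $K\defeq v_0(\overline{\Omega})$: indeed this gives $\norm{a_n}{\mathcal{L}(\mathcal{C}(\overline{\Omega})^n,\mathcal{C}(\overline{\Omega}))}\leq M/r^n$ at once, and the tails of the pointwise Taylor series are then dominated uniformly in $x$ by $M\sum_{n>N}(\Norm{v-v_0}{\mathcal{C}(\overline{\Omega})}/r)^n$.

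The main (and essentially the only) obstacle is establishing that uniform bound, since pointwise real analyticity of $F$ a priori yields a radius of convergence that may depend on the base point $s$. I would overcome this by invoking that $F$, being real analytic on $\R$, extends to a holomorphic function on an open complex neighbourhood of $\R$, hence on a closed tube of some radius $r>0$ about the compact set $K$; the Cauchy integral formula on that tube then gives $\abs{F^{(n)}(s)}\leq M\,n!/r^n$ for all $s\in K$ and $n\in\N$, with $M$ the supremum of the extension on the tube. With this in hand the two convergence claims follow as above, so $\mathcal{F}$ is analytic at $v_0$; since $v_0\in\mathcal{C}(\overline{\Omega})$ was arbitrary, $\mathcal{F}\in\mathcal{C}^{\omega}(\mathcal{C}(\overline{\Omega});\mathcal{C}(\overline{\Omega}))$.
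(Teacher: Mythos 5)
Your argument is correct, but note that the paper does not prove this statement at all: it is imported verbatim as a citation of \cite[Theorem 6.8]{Appell90}, so there is no internal proof to compare against. Your sketch is a sound self-contained substitute. The ``only if'' direction via constants and point evaluation is exactly the standard reduction and is fine. In the ``if'' direction you correctly identify the one genuine issue --- pointwise real analyticity of $F$ gives radii and coefficient bounds that a priori vary with the base point --- and your fix is the right one: extend $F$ holomorphically to an open neighbourhood of $\R$ in $\mathbb{C}$ (the local complex power-series extensions glue by the identity theorem, since overlapping discs centred on $\R$ meet in a set containing a real interval), pick by compactness of $K=v_0(\overline{\Omega})$ a closed tube of uniform radius $r>0$ inside that neighbourhood, and use Cauchy's estimates there to get $\sup_{s\in K}\abs{F^{(n)}(s)}\leq M\,n!/r^n$. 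This yields $\norm{a_n}{\mathcal{L}(\mathcal{C}(\overline{\Omega})^n,\mathcal{C}(\overline{\Omega}))}\leq M/r^n$, so both requirements of \Cref{def:analytic} (convergence of $\sum_n\norm{a_n}{}\Norm{v-v_0}{\mathcal{C}(\overline{\Omega})}^n$ and uniform convergence of the expansion to $F\circ v$ for $\Norm{v-v_0}{\mathcal{C}(\overline{\Omega})}<r$) follow as you say. Compared with simply citing Appell--Zabrejko, whose theorem is stated for general Nemytskii operators $f(x,u)$ and comes packaged with the broader theory of superposition operators, your route buys a short, elementary, self-contained proof tailored to the autonomous case actually used in \Cref{subsec:isoperimetric} and \Cref{subsec:AllenCahn}, at the cost of invoking complexification and Cauchy estimates explicitly; either way the statement used in the paper is fully justified.
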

\begin{lem}\label{lem:Ganalytic}
	The map $\mathcal{G}\colon V\to\R$ is analytic with
	\begin{align}\label{eq:G'2}
	\mathcal{G}^{\prime}(u) v = \int_{\Omega} g^{\prime}(u) v\diff x, \text{ for }u, v\in V.
	\end{align}
	In particular, $\mathcal{G}^{\prime}(u) = g^{\prime}(u)\in Y^{\ast}=L^p(\Omega)$.
\end{lem}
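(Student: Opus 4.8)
The plan is to factor $\mathcal{G}$ through spaces on which analyticity is already available, namely to write $\mathcal{G}$ as the composition of the (linear, continuous, hence analytic) embedding $V \hookrightarrow \mathcal{C}(\overline{\Omega})$, the superposition operator induced by $g$ on $\mathcal{C}(\overline{\Omega})$, and the (linear, continuous, hence analytic) integration functional $\mathcal{C}(\overline{\Omega}) \to \R$. First I would invoke the embedding $W^{2,p}(\Omega) \hookrightarrow \mathcal{C}(\overline{\Omega})$ for $p > d$ from \cite[Corollary 7.11]{GilbargTrudinger}, which is bounded and linear, hence analytic by \Cref{ex:multilinAnalytic}. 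Next, since $g \in \mathcal{C}^{\omega}(\R)$ by hypothesis, \Cref{thm:nemytskiiAnalytic} yields that the superposition operator $\mathcal{N}_g \colon \mathcal{C}(\overline{\Omega}) \to \mathcal{C}(\overline{\Omega})$, $\mathcal{N}_g(w) = g \circ w$, is analytic. Finally, the functional $I \colon \mathcal{C}(\overline{\Omega}) \to \R$, $I(w) = \int_\Omega w \diff x$, is linear and bounded (here the finite measure of $\Omega$ enters, $|I(w)| \le |\Omega| \norm{w}{\mathcal{C}(\overline{\Omega})}$), hence analytic by \Cref{ex:multilinAnalytic}. Then $\mathcal{G} = I \circ \mathcal{N}_g \circ (V \hookrightarrow \mathcal{C}(\overline{\Omega}))$ is analytic as a composition of analytic maps by \Cref{thm:companalytic}.

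For the derivative formula, I would differentiate the composition by the chain rule. The embedding $\iota \colon V \hookrightarrow \mathcal{C}(\overline{\Omega})$ has $\iota'(u) = \iota$; the integration functional has $I'(w) = I$; and the superposition operator $\mathcal{N}_g$ is Fréchet differentiable with $\mathcal{N}_g'(w)\eta = g'(w)\cdot \eta$ (pointwise multiplication in $\mathcal{C}(\overline{\Omega})$), which follows from \Cref{thm:nemytskiiAnalytic} applied to $g'$ together with the standard computation for superposition operators on $\mathcal{C}(\overline{\Omega})$. Composing, for $u, v \in V$ we obtain
\begin{align*}
\mathcal{G}^{\prime}(u)v = I\big(g^{\prime}(u)\cdot v\big) = \int_\Omega g^{\prime}(u)\,v \diff x,
\end{align*}
which is \eqref{eq:G'2}. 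Since $u \in V \hookrightarrow \mathcal{C}(\overline{\Omega})$, the function $g^{\prime}(u)$ is bounded, hence lies in $L^\infty(\Omega) \hookrightarrow L^p(\Omega) = Y^{\ast}$, and the estimate $|\mathcal{G}^{\prime}(u)v| \le \norm{g^{\prime}(u)}{L^p}\norm{v}{L^q}$ via Hölder shows that $\mathcal{G}^{\prime}(u)$ is represented by the element $g^{\prime}(u) \in Y^{\ast}$, giving the final assertion.

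I do not expect any serious obstacle here; the lemma is essentially a bookkeeping exercise once the right factorization is identified. The only point requiring a little care is that one must route the superposition operator through $\mathcal{C}(\overline{\Omega})$ rather than through an $L^q$-space: as emphasized in \Cref{rem:HilbertBad}, Nemytskii operators between Lebesgue spaces are analytic only for polynomial $g$, whereas \Cref{thm:nemytskiiAnalytic} gives analyticity on $\mathcal{C}(\overline{\Omega})$ for arbitrary analytic $g$. This is precisely why the choice $p > d$ (ensuring $W^{2,p}(\Omega) \hookrightarrow \mathcal{C}(\overline{\Omega})$) is made, and the proof should explicitly point to this embedding as the mechanism that makes the argument work.
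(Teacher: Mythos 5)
Your proposal is correct and follows essentially the same route as the paper: factor $\mathcal{G}$ through the embedding $V\hookrightarrow\mathcal{C}(\overline{\Omega})$, the superposition operator (analytic by \Cref{thm:nemytskiiAnalytic}), and the bounded linear integration functional, then obtain \eqref{eq:G'2} by differentiating and identify $\mathcal{G}^{\prime}(u)=g^{\prime}(u)\in\mathcal{C}(\overline{\Omega})\subset L^p(\Omega)=Y^{\ast}$. The paper computes the derivative directly as $\dtzero\mathcal{G}(u+tv)$ rather than via the chain rule for the Nemytskii operator, but this is an inessential difference.
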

\begin{proof}
	By the embedding $W^{2,p}(\Omega)\hookrightarrow \mathcal{C}(\overline{\Omega})$, the map $V\ni u\mapsto u\in \mathcal{C}(\overline{\Omega})$ is analytic. Hence, so is $V\ni u\mapsto g(u)\in \mathcal{C}(\overline{\Omega})$ by \Cref{thm:nemytskiiAnalytic}. Integrating is analytic by \Cref{ex:multilinAnalytic}, since it is linear and bounded. Using \Cref{thm:companalytic}, this yields $\mathcal{G}\in \mathcal{C}^{\omega}(V;\R)$. Furthermore, \eqref{eq:G'2} follows, since for $u,v\in V$, we have
	\begin{align*}
	\mathcal{G}^{\prime}(u)v = \dtzero \mathcal{G}(u+tv) = \int_{\Omega} \dtzero g(u+tv)\diff x = \int_{\Omega} g^{\prime}(u) v\diff x.
	\end{align*}
	Clearly, $\mathcal{G}^{\prime}(u) = g^{\prime}(u)\in Y^{\ast}=L^p(\Omega)$, since $g^{\prime}(u) \in\mathcal{C}(\overline{\Omega})\subset L^p(\Omega)$. Moreover, we have $\mathcal{G}^{\prime}\in \mathcal{C}^{\omega}(V, Y^{\ast})$, since $g^{\prime}\colon\R\to\R$ is analytic and so is $V\ni u\mapsto g^{\prime}(u) \in L^p(\Omega)$ by the embeddings $V\hookrightarrow  \mathcal{C}(\overline{\Omega})$ and $ \mathcal{C}(\overline{\Omega})\hookrightarrow L^p(\Omega)$ and using \Cref{thm:companalytic,thm:nemytskiiAnalytic}.
\end{proof}
As a next step, we compute the second derivative of $\mathcal{G}$.
\begin{lem}\label{lem:G''compact}
	The operator $\mathcal{G}^{\prime\prime}(u)\colon V\to Y^{\ast}$ is compact for any $u\in V$.
\end{lem}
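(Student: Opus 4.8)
The plan is to compute $\mathcal{G}^{\prime\prime}(u)$ explicitly as a Nemytskii-type operator and then exploit the compact Sobolev embedding $W^{2,p}(\Omega)\hookrightarrow\hookrightarrow \mathcal{C}(\overline{\Omega})$ (valid for $p>d$ on a bounded $\mathcal{C}^{1,1}$-domain, hence in particular bounded and Lipschitz). First I would differentiate \eqref{eq:G'2} once more: for $u, v, w\in V$ one gets
\begin{align*}
	\mathcal{G}^{\prime\prime}(u)(v,w) = \int_{\Omega} g^{\prime\prime}(u)\, v\, w\diff x,
\end{align*}
which can be justified rigorously exactly as in the proof of \Cref{lem:Ganalytic}, using that $V\ni u\mapsto g^{\prime\prime}(u)\in \mathcal{C}(\overline{\Omega})$ is analytic (by \Cref{thm:nemytskiiAnalytic} since $g^{\prime\prime}$ is analytic) together with the embedding $\mathcal{C}(\overline{\Omega})\hookrightarrow L^p(\Omega)$ and boundedness of the trilinear integration map. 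In particular, the operator $\mathcal{G}^{\prime\prime}(u)\colon V\to Y^{\ast} = L^p(\Omega)$ acts as $v\mapsto g^{\prime\prime}(u)\, v$, i.e.\ it is multiplication by the fixed continuous function $g^{\prime\prime}(u)\in \mathcal{C}(\overline{\Omega})$.

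Next I would establish compactness. The operator factors as
\begin{align*}
	V \hookrightarrow \mathcal{C}(\overline{\Omega}) \xrightarrow{\,\cdot\, g^{\prime\prime}(u)\,} \mathcal{C}(\overline{\Omega}) \hookrightarrow L^p(\Omega),
\end{align*}
where the first arrow is the embedding $W^{2,p}(\Omega)\hookrightarrow \mathcal{C}(\overline{\Omega})$. Since $\Omega$ is a bounded domain with $\mathcal{C}^{1,1}$-boundary and $p>d$, this embedding is compact by the Rellich--Kondrachov theorem (e.g.\ via $W^{2,p}(\Omega)\hookrightarrow C^{1,\alpha}(\overline{\Omega})$ with $\alpha = 1-d/p$ from \cite[Corollary 7.11]{GilbargTrudinger}, followed by the compact embedding $C^{1,\alpha}(\overline{\Omega})\hookrightarrow\hookrightarrow \mathcal{C}(\overline{\Omega})$ by Arzel\`a--Ascoli). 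Multiplication by the bounded function $g^{\prime\prime}(u)$ is a bounded linear operator on $\mathcal{C}(\overline{\Omega})$, and $\mathcal{C}(\overline{\Omega})\hookrightarrow L^p(\Omega)$ is bounded since $\Omega$ has finite measure. A composition of bounded operators with a compact one is compact, so $\mathcal{G}^{\prime\prime}(u)\colon V\to Y^{\ast}$ is compact.

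I do not expect any genuine obstacle here; the only point requiring a little care is the rigorous justification that the formal second derivative $\int_\Omega g^{\prime\prime}(u) v w\diff x$ really is the Fréchet second derivative of $\mathcal{G}$ in the $V$-to-$Y^{\ast}$ sense, but this is immediate from the analyticity statements already proven in \Cref{lem:Ganalytic} (analyticity of $V\to Y^\ast$, $u\mapsto g^\prime(u)$, implies the derivative formula by differentiating the power series, or by term-by-term differentiation under the integral, legitimate since $u+tv$ stays in a neighbourhood in $\mathcal{C}(\overline\Omega)$ where $g^\prime$ is given by a convergent power series). Thus the substantive content of the lemma is simply the compact embedding $W^{2,p}(\Omega)\hookrightarrow\hookrightarrow\mathcal{C}(\overline{\Omega})$, which is exactly where the hypothesis $p>d$ is used.
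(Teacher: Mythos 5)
Your proposal is correct and follows essentially the same route as the paper: identify $\mathcal{G}^{\prime\prime}(u)$ as multiplication by the continuous function $g^{\prime\prime}(u)$ and then invoke a compact Sobolev embedding, the only cosmetic difference being that you place the compactness in the step $V\hookrightarrow\mathcal{C}(\overline{\Omega})$ while the paper uses the compact embedding $V\hookrightarrow L^p(\Omega)$ directly.
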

\begin{proof}
	Let $u,v \in V$. From \eqref{eq:G'2}, we conclude 
	\begin{align*}
	\mathcal{G}^{\prime\prime}(u)v = \dtzero g^{\prime}(u+tv) = g^{\prime\prime}(u) v \in L^p(\Omega).
	\end{align*}
	Since $u\in V\hookrightarrow \mathcal{C}(\overline{\Omega})$, $g^{\prime\prime}(u)\in \mathcal{C}(\overline{\Omega})$ and hence $V\ni v\mapsto g^{\prime
		\prime}(u)v\in Y^{\ast}=L^p(\Omega)$ is compact, since the embedding $V\ni v\mapsto v\in L^p(\Omega)$ is compact. It follows that $\mathcal{G}^{\prime\prime}(u)\colon V\to Y^{\ast}$ is compact.
\end{proof}

\begin{lem}\label{lem:EGAnalytic}
	The map $\mathcal{E}\colon V\to\R$ is analytic with
	\begin{align}
	\mathcal{E}^{\prime}(u) v &= -\int_{\Omega} \divergence\left(\frac{\nabla u}{\sqrt{1+\abs{\nabla u}^2}}\right) v\diff x, \text{ for } u,v \in V. \label{eq:E'}
	\end{align}
	Moreover, $\mathcal{E}^{\prime}(u)\in Y^{\ast} = L^p(\Omega)$ for all $u\in V$.
\end{lem}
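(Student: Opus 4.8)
The plan is to write $\mathcal{E}(u) = \int_\Omega F(\nabla u)\,\diff x$ with $F\colon\R^d\to\R$, $F(\xi)\defeq\sqrt{1+|\xi|^2}$, which is real analytic on all of $\R^d$ since $1+|\xi|^2\geq 1$ and $t\mapsto\sqrt{t}$ is analytic on $(0,\infty)$. First I would note that the gradient map $V\to\mathcal{C}(\overline{\Omega};\R^d)$, $u\mapsto\nabla u$, is bounded and linear — here the hypothesis $p>d$ enters, via the Sobolev embedding $W^{2,p}(\Omega)\hookrightarrow\mathcal{C}^1(\overline{\Omega})$ — hence analytic by \Cref{ex:multilinAnalytic}. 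The core step is to show that the superposition map $\mathcal{C}(\overline{\Omega};\R^d)\ni\xi\mapsto F\circ\xi\in\mathcal{C}(\overline{\Omega})$ is analytic, i.e. the vector-valued analogue of \Cref{thm:nemytskiiAnalytic}. I would prove this directly: fixing $\xi_0$, setting $R\defeq\|\xi_0\|_{\mathcal{C}(\overline{\Omega})}$ and using that on the compact ball $\overline{B_{R+1}}\subset\R^d$ the derivatives of the real analytic function $F$ obey uniform Cauchy-type estimates $\sup_{|a|\leq R+1}\|D^nF(a)\|\leq C\,n!\,\rho^{-n}$ for some $C,\rho>0$, one finds that for $\|\xi-\xi_0\|_{\mathcal{C}(\overline{\Omega})}<\rho$ the series $\sum_{n}\frac1{n!}D^nF(\xi_0(\cdot))[\xi-\xi_0,\dots,\xi-\xi_0]$ converges absolutely in $\mathcal{C}(\overline{\Omega})$ and its sum agrees pointwise, hence in $\mathcal{C}(\overline{\Omega})$, with $F\circ\xi$; its $n$-th term is a bounded symmetric $n$-linear form of $\xi-\xi_0$ with norm $\leq C\rho^{-n}$, which is precisely the condition in \Cref{def:analytic}. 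Composing with the gradient map via \Cref{thm:companalytic} and then with the bounded linear integration functional $\mathcal{C}(\overline{\Omega})\to\R$ (again \Cref{ex:multilinAnalytic}) gives $\mathcal{E}\in\mathcal{C}^{\omega}(V;\R)$.

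For formula \eqref{eq:E'}, since $\mathcal{E}$ is now known to be Fréchet differentiable, $\mathcal{E}^{\prime}(u)v$ equals the Gâteaux derivative $\dtzero\int_\Omega\sqrt{1+|\nabla u+t\nabla v|^2}\,\diff x$; differentiation under the integral is legitimate because $\nabla u,\nabla v\in\mathcal{C}(\overline{\Omega};\R^d)$, so that $\mathcal{E}^{\prime}(u)v=\int_\Omega\Phi(\nabla u)\cdot\nabla v\,\diff x$ with $\Phi(\xi)\defeq\xi/\sqrt{1+|\xi|^2}$. Now $\Phi\in\mathcal{C}^{\infty}(\R^d;\R^d)$ has bounded first derivatives, and $\nabla u\in W^{1,p}(\Omega;\R^d)$ because $u\in W^{2,p}(\Omega)$; hence the Sobolev chain rule yields $\Phi(\nabla u)\in W^{1,p}(\Omega;\R^d)$ with $\divergence\Phi(\nabla u)=\operatorname{tr}\!\bigl(D\Phi(\nabla u)\,D^2u\bigr)\in L^p(\Omega)$. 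Since $v\in W^{1,p}_0(\Omega)$ and $\partial\Omega$ is (more than) Lipschitz, integration by parts gives $\int_\Omega\Phi(\nabla u)\cdot\nabla v\,\diff x=-\int_\Omega\divergence\Phi(\nabla u)\,v\,\diff x$, which is \eqref{eq:E'}. In particular $\mathcal{E}^{\prime}(u)=-\divergence\bigl(\nabla u/\sqrt{1+|\nabla u|^2}\bigr)\in L^p(\Omega)=Y^{\ast}$, using the dense embedding $V\hookrightarrow L^q(\Omega)$ to identify the functional with this $L^p$-representative.

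The main obstacle is the analyticity of the vector-valued superposition operator $\xi\mapsto F\circ\xi$: \Cref{thm:nemytskiiAnalytic} is stated only for scalar arguments, and the tempting shortcut of writing $F(\xi)=h(|\xi|^2)$ with $h(t)=\sqrt{1+t}$ fails, since $h$ has a branch point at $t=-1$ and so does not extend to a real analytic function on all of $\R$; thus \Cref{thm:nemytskiiAnalytic} simply does not apply to $h$. Consequently the uniform Cauchy estimates for the Taylor coefficients of $F$ on compact subsets of $\R^d$, together with the care needed to check that the resulting series converges in the $\mathcal{C}(\overline{\Omega})$-norm and not merely pointwise, are the technical heart of the argument; everything else is routine once $p>d$ has been used to land in $\mathcal{C}^1(\overline{\Omega})$.
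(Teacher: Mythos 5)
Your proof is correct, but it takes a genuinely different route from the paper at the one point that matters. The paper never touches a vector-valued superposition operator: it factors $\mathcal{E}$ as $u\mapsto \nabla u\in\mathcal{C}(\overline{\Omega};\R^d)$, then the scalar-valued diagonal of a bounded bilinear map $u\mapsto\abs{\nabla u}^2\in\mathcal{C}(\overline{\Omega})$ (\Cref{ex:multilinAnalytic}), then the \emph{scalar} superposition operator $v\mapsto(1+v)^{\beta}$ applied on the open set $\mathcal{C}(\overline{\Omega};(-1,\infty))$ via \Cref{thm:nemytskiiAnalytic} (cf. \eqref{eq:alphaAnalytic}), and finally the bounded linear integration functional. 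Your objection that this shortcut "fails" because $h(t)=\sqrt{1+t}$ has a branch point at $t=-1$ is overstated: analyticity of the superposition operator is a local property, one only needs $h$ analytic in a neighborhood of the (compact) range of $1+\abs{\nabla u}^2\subset[1,\infty)$, and restricting the operator to $\mathcal{C}(\overline{\Omega};(-1,\infty))$ — which is exactly what the paper does — is legitimate; the global hypothesis in the statement of \Cref{thm:nemytskiiAnalytic} is only essential for the converse direction. Your replacement, a direct proof that $\xi\mapsto F\circ\xi$ is analytic on $\mathcal{C}(\overline{\Omega};\R^d)$ via uniform Cauchy estimates for $D^nF$ on compact balls, is correct and self-contained (it is in effect a proof of the local, vector-valued version of the Nemytskii theorem), but it is more laborious than what is needed; what it buys is independence from the cited Appell--Zabrejko result and applicability to integrands that are not functions of $\abs{\nabla u}^2$ alone. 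For \eqref{eq:E'} the paper simply cites Giaquinta--Hildebrandt and then reads off $\mathcal{E}^{\prime}(u)\in L^p(\Omega)$ from the expansion \eqref{eq:E'expanded} and $W^{2,p}\hookrightarrow\mathcal{C}^1(\overline{\Omega})$, whereas you derive it by differentiating under the integral and integrating by parts against $v\in W^{1,p}_0(\Omega)$ with the Sobolev chain rule; your argument is sound (and no boundary regularity is even needed there, since $v$ is a limit of compactly supported functions) and yields the $L^p$-representative at the same time.
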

This analyticity statement motivates our choice of spaces in the beginning of this section.
\begin{proof}
	We first note that the following maps are analytic.
	\begin{enumerate}[(i)]
		\item The embedding $i\colon V\hookrightarrow \mathcal{C}^1(\overline{\Omega})$ since $d>p$ using \Cref{ex:multilinAnalytic}.
		\item The map $\mathcal{C}^1(\overline{\Omega})\to\mathcal{C}(\overline{\Omega}), u\mapsto \abs{\nabla u}^2$ by \Cref{ex:multilinAnalytic}, since it is the diagonal of a bounded bilinear map.
		\item The map 
		$\mathcal{C}(\overline{\Omega};(-1,\infty))\subset \mathcal{C}(\overline{\Omega})  \to \mathcal{C}(\overline{\Omega}), \mathcal{F}(v)=(1+v)^{-\alpha}$
		for $\alpha>0$ by \Cref{thm:nemytskiiAnalytic}, since the map $F\colon (-1,\infty)\to\R, F(x) = (1+x)^{-\alpha}$ is analytic.
		\item The map 
		\begin{align}\label{eq:alphaAnalytic}
		V \to \mathcal{C}(\overline{\Omega}), u\mapsto (1+\abs{\nabla u}^2)^{-\alpha}
		\end{align}
		for $\alpha>0$ as a composition of the maps in (i)-(iii) using \Cref{thm:companalytic}.
		\item The map $\mathcal{C}(\overline{\Omega})\to\R, v\mapsto\int_{\Omega}v\diff x$ by \Cref{ex:multilinAnalytic}, since it is linear and bounded.
	\end{enumerate}
	Since $\mathcal{E}$ can be written as the composition of these maps, $\mathcal{E}$ is analytic. For a proof of \eqref{eq:E'}, consider \cite[Chapter 1, 2.2 Example 5]{GiaquintaHildebrandt}. Note that $\mathcal{E}^{\prime}(u)\in L^p(\Omega)$, since for $u\in W^{2,p}(\Omega)$, we have using summation convention
	\begin{align}\label{eq:E'expanded}
	\begin{split}
	\divergence\left(\frac{\nabla u}{\sqrt{1+\abs{\nabla u}^2}}\right) &=  \frac{\Delta u}{\sqrt{1+\abs{\nabla u}^2}} -\partial_i u \frac{\partial_i\partial_j u \partial_j u}{(1+\abs{\nabla u}^2)^{\frac{3}{2}}}.
	\end{split}
	\end{align}
	By the embedding $W^{2,p}(\Omega)\hookrightarrow \mathcal{C}^1(\overline{\Omega})$ and since the denominators are bounded from below, we conclude that $\mathcal{E}^{\prime}(u)\in L^p(\Omega)$ for $u\in W^{2,p}(\Omega)$.
\end{proof}
\begin{lem}\label{lem:E'analytic}
	The function $\mathcal{E}^{\prime}\colon V\to L^p(\Omega), u\mapsto\mathcal{E}^{\prime}(u)$ is analytic.
\end{lem}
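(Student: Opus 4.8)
The plan is to exploit the explicit expansion \eqref{eq:E'expanded} of $\mathcal{E}^{\prime}(u) = -\divergence\big(\nabla u / \sqrt{1+|\nabla u|^2}\big)$ and to exhibit $\mathcal{E}^{\prime}$ as a finite sum of products of maps which are already known to be analytic, so that analyticity follows from \Cref{thm:companalytic} and \Cref{ex:multilinAnalytic} (closure of the class of analytic maps under composition and under bounded multilinear operations).

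First I would collect the elementary building blocks. The map $V \ni u \mapsto \nabla u \in \mathcal{C}(\overline{\Omega};\R^d)$ is linear and bounded since $V\hookrightarrow \mathcal{C}^1(\overline{\Omega})$, hence analytic by \Cref{ex:multilinAnalytic}; for each $i,j$ the map $V \ni u \mapsto \partial_i\partial_j u \in L^p(\Omega)$ is linear and bounded, hence analytic; and $V \ni u \mapsto (1+|\nabla u|^2)^{-\alpha} \in \mathcal{C}(\overline{\Omega})$ is analytic for any $\alpha>0$ --- this is exactly \eqref{eq:alphaAnalytic} established in the proof of \Cref{lem:EGAnalytic} --- which I use with $\alpha = \tfrac12$ and $\alpha = \tfrac32$.

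Next I would combine these via pointwise multiplication, using that $\mathcal{C}(\overline{\Omega})\times\mathcal{C}(\overline{\Omega})\to\mathcal{C}(\overline{\Omega})$ and $\mathcal{C}(\overline{\Omega})\times L^p(\Omega)\to L^p(\Omega)$ are bounded bilinear, hence analytic by \Cref{ex:multilinAnalytic}. Thus the first term $u \mapsto \Delta u \,(1+|\nabla u|^2)^{-1/2}$ is analytic $V \to L^p(\Omega)$ as the composition of $u\mapsto \big(\Delta u,\, (1+|\nabla u|^2)^{-1/2}\big) \in L^p(\Omega)\times \mathcal{C}(\overline{\Omega})$ with the bilinear multiplication; and similarly each summand $u\mapsto \partial_i u\, \partial_j u\, (1+|\nabla u|^2)^{-3/2}\, \partial_i\partial_j u$ is analytic $V\to L^p(\Omega)$, obtained by first multiplying the three $\mathcal{C}(\overline{\Omega})$-valued factors and then multiplying the result by the $L^p$-valued factor $\partial_i\partial_j u$. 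Since a finite sum of analytic maps is analytic, \eqref{eq:E'expanded} shows that $u \mapsto \mathcal{E}^{\prime}(u)$ is analytic from $V$ into $L^p(\Omega) = Y^{\ast}$.

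There is no genuine difficulty here; the only point needing care is the bookkeeping of spaces: the second derivatives $\Delta u$ and $\partial_i\partial_j u$ only lie in $L^p(\Omega)$ and not in $\mathcal{C}(\overline{\Omega})$, so they must be handled through the bilinear map $\mathcal{C}(\overline{\Omega})\times L^p(\Omega)\to L^p(\Omega)$ rather than through multiplication on $\mathcal{C}(\overline{\Omega})$, while all remaining factors are continuous because $\nabla u\in\mathcal{C}(\overline{\Omega})$ and the denominators $1+|\nabla u|^2$ are bounded below by $1$.
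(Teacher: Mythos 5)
Your proof is correct and follows essentially the same route as the paper: it decomposes $\mathcal{E}^{\prime}(u)$ via \eqref{eq:E'expanded} into the same building blocks (second derivatives into $L^p(\Omega)$, first derivatives and the powers $(1+\abs{\nabla u}^2)^{-1/2}$, $(1+\abs{\nabla u}^2)^{-3/2}$ into $\mathcal{C}(\overline{\Omega})$ via \eqref{eq:alphaAnalytic}), and combines them through the bounded bilinear multiplications $\mathcal{C}(\overline{\Omega})\times\mathcal{C}(\overline{\Omega})\to\mathcal{C}(\overline{\Omega})$ and $\mathcal{C}(\overline{\Omega})\times L^p(\Omega)\to L^p(\Omega)$ using \Cref{ex:multilinAnalytic} and \Cref{thm:companalytic}. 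The only difference is that you spell out the bookkeeping of which factor lives in which space slightly more explicitly than the paper does; the substance is identical.
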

\begin{proof}
	The following maps are analytic.
	\begin{enumerate}[(i)]
		\item The map $V\to L^p(\Omega), u\mapsto \partial_i \partial_j u$ for any $i,j\in\{ 1, \dots, d\}$ by \Cref{ex:multilinAnalytic}.
		\item The maps $u \mapsto (1+\abs{\nabla u}^2)^{-\frac{1}{2}}, u\mapsto(1+\abs{\nabla u}^2)^{-\frac{3}{2}}\colon V\to \mathcal{C}(\overline{\Omega})$ by \eqref{eq:alphaAnalytic}.
		\item $V\to \mathcal{C}(\overline{\Omega}), u\mapsto \partial_j u$ for any $j\in \{1,\dots, d\}$ by \Cref{ex:multilinAnalytic}.
	\end{enumerate}
	Since the pointwise multiplications $\mathcal{C}(\overline{\Omega})\times L^p(\Omega)\to L^p(\Omega)$ and $\mathcal{C}(\overline{\Omega})\times \mathcal{C}(\overline{\Omega})\to \mathcal{C}(\overline{\Omega})$  are bilinear and bounded, they are analytic by \Cref{ex:multilinAnalytic}. Hence, so is $u\mapsto \mathcal{E}^{\prime}(u)$ by \eqref{eq:E'expanded} and \Cref{thm:companalytic}.
\end{proof}
\begin{lem}\label{lem:ExampleE''Fredholm}
	Let $u\in V$. Then the Fréchet derivative $\mathcal{E}^{\prime\prime}(u)\colon V\to Y^{\ast}$ is Fredholm of index zero.
\end{lem}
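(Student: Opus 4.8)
The plan is to compute $\mathcal{E}^{\prime\prime}(u)$ explicitly from the formula \eqref{eq:E'expanded} and recognize it as a second-order elliptic operator, which is a compact perturbation of the Laplacian $\Delta\colon V\to L^p(\Omega)$. Since the Laplacian (with the Dirichlet boundary condition encoded in $V = W^{2,p}(\Omega)\cap W^{1,p}_0(\Omega)$) is an isomorphism onto $L^p(\Omega)$ by standard elliptic $L^p$-theory, it is in particular Fredholm of index zero; then \Cref{prop:Fredholmproperties}(ii) finishes the argument once we check the remaining terms are compact.

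In detail, first I would differentiate \eqref{eq:E'expanded} at $u$ in direction $v\in V$. Writing $a^{ij}(u) \defeq \tfrac{\delta_{ij}}{\sqrt{1+|\nabla u|^2}} - \tfrac{\partial_i u\,\partial_j u}{(1+|\nabla u|^2)^{3/2}}$, one sees that $\mathcal{E}^{\prime}(u) = -\operatorname{div}(a(u)\nabla u) = -a^{ij}(u)\partial_i\partial_j u - (\partial_i a^{ij}(u))\partial_j u$, and differentiating in $v$ gives
\begin{align*}
\mathcal{E}^{\prime\prime}(u)v = -a^{ij}(u)\,\partial_i\partial_j v + (\text{lower order terms in }v),
\end{align*}
where the lower order terms involve only $v$ and $\nabla v$ multiplied by functions in $\mathcal{C}(\overline{\Omega})$ (coming from second derivatives of the analytic nonlinearity evaluated at $u\in W^{2,p}\hookrightarrow \mathcal{C}^1(\overline{\Omega})$, and from $\partial_i\partial_j u\in L^p(\Omega)$ appearing as coefficients). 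The principal part $v\mapsto -a^{ij}(u)\partial_i\partial_j v$ is uniformly elliptic because the matrix $a(u)$ is positive definite with eigenvalues bounded below and above on $\overline{\Omega}$ (its eigenvalues are $(1+|\nabla u|^2)^{-1/2}$ and $(1+|\nabla u|^2)^{-3/2}$), and its coefficients lie in $\mathcal{C}(\overline{\Omega})$; hence by elliptic $L^p$-regularity (e.g.\ \cite[Theorem 9.15 and Lemma 9.17]{GilbargTrudinger}, using the $\mathcal{C}^{1,1}$-regularity of $\partial\Omega$) this operator is Fredholm of index zero from $V$ to $L^p(\Omega)$.

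It remains to argue that the difference between $\mathcal{E}^{\prime\prime}(u)$ and its principal part is a compact operator $V\to L^p(\Omega)$. Every such term is of the form $w\cdot L v$ where $L v\in\{v,\partial_j v\}$ and $w\in L^p(\Omega)$ or $w\in\mathcal{C}(\overline{\Omega})$; since the embeddings $V\hookrightarrow \mathcal{C}(\overline{\Omega})$ and $V\hookrightarrow \mathcal{C}^1(\overline{\Omega})$ are compact (as $p>d$, using Morrey and the compactness of $W^{2,p}\hookrightarrow\mathcal{C}^1$ on bounded Lipschitz domains), the maps $v\mapsto w\cdot v$ and $v\mapsto w\cdot\partial_j v$ from $V$ to $L^p(\Omega)$ are compact—for the case $w\in L^p(\Omega)$ one uses that $v,\partial_j v$ converge uniformly along a subsequence so the products converge in $L^p$. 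Therefore $\mathcal{E}^{\prime\prime}(u)$ is a compact perturbation of a Fredholm operator of index zero, hence itself Fredholm of index zero by \Cref{prop:Fredholmproperties}(ii).

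The main obstacle I anticipate is purely bookkeeping: carefully carrying out the differentiation of the quotient expression \eqref{eq:E'expanded} in $v$ and verifying that every term other than the principal part genuinely has a coefficient in $\mathcal{C}(\overline{\Omega})$ or $L^p(\Omega)$ so that the compactness argument applies uniformly; there is no conceptual difficulty, since the ellipticity of $a(u)$ and the $L^p$-theory are standard.
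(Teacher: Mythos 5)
Your proposal is correct and follows essentially the same route as the paper: identify the principal part $-a^{ij}(u)\partial_i\partial_j v$ (the same operator $A$ as in the paper), invoke elliptic $L^p$-theory \cite[Theorem 9.15]{GilbargTrudinger} for it, and absorb all lower-order terms as compact operators via the compact embedding $V\hookrightarrow\mathcal{C}^1(\overline{\Omega})$ together with \Cref{prop:Fredholmproperties}(ii). One cosmetic slip: $\mathcal{E}^{\prime}(u)\neq-\divergence(a(u)\nabla u)$ (indeed $a(u)\nabla u=\nabla u\,(1+\abs{\nabla u}^2)^{-3/2}$); the correct identity is the nondivergence form $\mathcal{E}^{\prime}(u)=-a^{ij}(u)\partial_i\partial_j u$, but this does not affect your linearization or the argument.
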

\begin{proof}
	By \Cref{lem:E'analytic}, for $u, v\in V,$  we compute
	\begin{align*}
	\mathcal{E}^{\prime\prime}(u)v &= \dtzero \mathcal{E}^{\prime}(u+tv) = -\divergence\left( \frac{\nabla v \sqrt{1+\abs{\nabla u}^2} - \nabla u \frac{\langle\nabla u, \nabla v\rangle}{\sqrt{1+\abs{\nabla u}^2}}}{1+\abs{\nabla u}^2} \right) \\
	&= - \frac{\Delta v}{\sqrt{1+\abs{\nabla u}^2}}+ \frac{1}{(1+\abs{\nabla u}^2)^{\frac{3}{2}}} \left\langle \nabla u, \nabla\langle \nabla u, \nabla v\rangle \right\rangle + \tilde{K}v\\
	&= - \frac{1}{(1+\abs{\nabla u}^{2})^{\frac{3}{2}}}\left(\delta_{ij}(1+\abs{\nabla u}^{2}) - \partial_i u \partial_j u\right) \partial_{i}\partial_j v + Kv \eqdef - Av + Kv
	\end{align*}
	using summation convention, where $\tilde{K}, K\colon V\to L^p(\Omega)$ only contain terms in $v$ of order $1$ or lower, whence are compact by the {Rellich--Kondrachov} Theorem \cite[Theorem 7.26]{GilbargTrudinger}.
	
	It is easy to see that $A$ uniformly is elliptic, hence $A \colon W^{2,p}(\Omega)\cap W^{1,p}_0(\Omega)\to L^p(\Omega)$ is an isomorphism by \cite[Theorem 9.15]{GilbargTrudinger}. Therefore, $\mathcal{E}^{\prime\prime}(u) = -A+K\colon V\to Y^{\ast}$ is Fredholm of index zero by \Cref{prop:Fredholmproperties}. 
\end{proof}

Now, we can apply \Cref{thm:main} to our situation.

\begin{thm}\label{thm:ExampleLoja}
	Let $\bar{u}\in V, \mathcal{G}(\bar{u})=0$ with $g^{\prime}(\bar{u})\not \equiv 0$ be a constraint critical point of $\mathcal{E}$ on  $\mathcal{M} = \{u\in V\mid \mathcal{G}(u)=0\}$. Then, $\mathcal{M}$ is locally a manifold near $\bar{u}$ and satisfies a {\L}ojasiewicz--Simon gradient inequality on $\mathcal{M}$, i.e. there exist $C, \sigma>0$ and $\theta\in (0,\frac{1}{2}]$ such that for any $u\in \mathcal{M}$ with $\Norm{u-\bar{u}}{W^{2,p}(\Omega)}\leq \sigma$, we have
	\begin{align}\label{eq:ExampleLojaEstimate}
	\abs{\mathcal{E}(u)-\mathcal{E}(\bar{u})}^{1-\theta}\leq C\Big\Vert{\mathcal{E}^{\prime}(u)-\frac{\int_{\Omega} \mathcal{E}^{\prime}(u) g^{\prime}(u)\diff x}{\int_\Omega (g^{\prime}(u))^2 \diff x} g^{\prime}(u)}\Big\Vert_{L^p(\Omega)}.
	\end{align}
\end{thm}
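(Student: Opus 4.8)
The plan is to verify the six hypotheses of \Cref{thm:main} for the choices $V = W^{2,p}(\Omega)\cap W^{1,p}_0(\Omega)$, $Y = L^q(\Omega)$ with $Y^\ast = L^p(\Omega)$ and $m=1$, to invoke the theorem, and then to rewrite the abstract bound \eqref{eq:LSnonlin} in the explicit form \eqref{eq:ExampleLojaEstimate}. Hypotheses (i)--(iii) are already in hand: $V\hookrightarrow Y$ densely is standard, \Cref{lem:E'analytic} together with \Cref{lem:EGAnalytic} gives $\mathcal{E}^\prime\in\mathcal{C}^\omega(V;Y^\ast)$, and \Cref{lem:ExampleE''Fredholm} shows that $\mathcal{E}^{\prime\prime}(\bar u)\colon V\to Y^\ast$ is Fredholm of index zero. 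For (iv) I would use \Cref{lem:Ganalytic}: since $\mathcal{G}^\prime(u)v=\int_\Omega g^\prime(u)v\diff x$ with $g^\prime(u)\in\mathcal{C}(\overline\Omega)\subset L^p(\Omega)$, Hölder's inequality shows that this pairing extends to all $y\in L^q(\Omega)$, so $\overline{\mathcal{G}^\prime(u)}\in\mathcal{L}(Y,\R)$ is $y\mapsto\int_\Omega g^\prime(u)y\diff x$, which under the isometric identification $\mathcal{L}(L^q(\Omega),\R)\cong L^p(\Omega)$ is just $g^\prime(u)$; analyticity of $u\mapsto\overline{\mathcal{G}^\prime(u)}$ is then exactly analyticity of $u\mapsto g^\prime(u)\in L^p(\Omega)$, which is part of \Cref{lem:Ganalytic}. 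For (v), differentiating as in the proof of \Cref{lem:G''compact} gives $\big(\overline{\mathcal{G}^\prime}\big)^\prime(\bar u)v = g^{\prime\prime}(\bar u)v\in L^p(\Omega)$, and \Cref{lem:G''compact} tells us that this operator $V\to L^p(\Omega)$ is compact. Finally, for (vi): $\mathcal{G}(\bar u)=0$ is assumed, and $\mathcal{G}^\prime(\bar u)\colon V\to\R$ is surjective, i.e.\ nonzero, because $g^\prime(\bar u)\in\mathcal{C}(\overline\Omega)$ does not vanish identically, so a bump function $v\in W^{2,p}_0(\Omega)\subset V$ supported on a ball where $g^\prime(\bar u)$ keeps a fixed sign satisfies $\mathcal{G}^\prime(\bar u)v\neq0$.

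With the hypotheses verified, \Cref{thm:main} yields that $\mathcal{M}$ is locally an analytic submanifold of codimension one near $\bar u$ and --- since $\bar u$ is a constraint critical point --- the estimate $\abs{\mathcal{E}(u)-\mathcal{E}(\bar u)}^{1-\theta}\leq C\Norm{\mathcal{E}^\prime(u)}{\overline{\mathcal{T}_u\mathcal{M}}^\ast}$ for $u\in\mathcal{M}$ with $\Norm{u-\bar u}{W^{2,p}(\Omega)}\leq\sigma$. It remains to bound the cotangent norm. By \Cref{prop:propertiesTuM} we have $\overline{\mathcal{T}_u\mathcal{M}}=\ker\overline{\mathcal{G}^\prime(u)}=\{y\in L^q(\Omega)\mid\int_\Omega g^\prime(u)y\diff x=0\}$, so for any such $y$ and any $\mu\in\R$, using $\mathcal{E}^\prime(u)\in L^p(\Omega)$ (by \Cref{lem:EGAnalytic}) and Hölder's inequality,
\begin{align*}
\abs{\mathcal{E}^\prime(u)y} = \abs{\int_\Omega\big(\mathcal{E}^\prime(u)-\mu\,g^\prime(u)\big)y\diff x} \leq \Norm{\mathcal{E}^\prime(u)-\mu\,g^\prime(u)}{L^p(\Omega)}\,\Norm{y}{L^q(\Omega)}.
\end{align*}
Hence $\Norm{\mathcal{E}^\prime(u)}{\overline{\mathcal{T}_u\mathcal{M}}^\ast}\leq\Norm{\mathcal{E}^\prime(u)-\mu\,g^\prime(u)}{L^p(\Omega)}$ for every $\mu\in\R$. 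Shrinking $\sigma$ so that $\int_\Omega(g^\prime(u))^2\diff x>0$ on the relevant neighbourhood --- legitimate because $u\mapsto g^\prime(u)$ is continuous into $\mathcal{C}(\overline\Omega)$ and $g^\prime(\bar u)\not\equiv0$ --- and choosing $\mu=\mu(u)\defeq\big(\int_\Omega(g^\prime(u))^2\diff x\big)^{-1}\int_\Omega\mathcal{E}^\prime(u)g^\prime(u)\diff x$ then produces exactly \eqref{eq:ExampleLojaEstimate}.

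I do not expect a genuine obstacle here: the substantial work (analyticity of $\mathcal{E}^\prime$ and $\mathcal{G}^\prime$, the Fredholm property of $\mathcal{E}^{\prime\prime}(\bar u)$, compactness of $\mathcal{G}^{\prime\prime}$) has already been carried out in the preceding lemmas, and the rest is the elementary observation that subtracting a multiple of $g^\prime(u)$ from $\mathcal{E}^\prime(u)$ leaves its action on the closed tangent space unchanged. The only mildly delicate point is that $Y=L^q(\Omega)$ is not a Hilbert space, so the "projection" implicit in \eqref{eq:LSnonlin} is not orthogonal; the way around this is simply not to construct any projection and instead to optimize over the free multiplier $\mu$, the $L^2$-orthogonal choice $\mu(u)$ being the one that displays the bound in its familiar Lagrange-multiplier form (compare \Cref{rem:HilbertSpace m=1}).
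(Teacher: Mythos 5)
Your proposal is correct and follows essentially the same route as the paper: verify hypotheses (i)--(vi) of \Cref{thm:main} with $V=W^{2,p}(\Omega)\cap W^{1,p}_0(\Omega)$, $Y=L^q(\Omega)$ via \Cref{lem:Ganalytic,lem:G''compact,lem:EGAnalytic,lem:E'analytic,lem:ExampleE''Fredholm}, then bound $\Norm{\mathcal{E}^{\prime}(u)}{\overline{\mathcal{T}_u\mathcal{M}}^{\ast}}$ by $\Norm{\mathcal{E}^{\prime}(u)-\lambda g^{\prime}(u)}{L^p(\Omega)}$ for an arbitrary multiplier and specialize to the $L^2$-type choice of $\lambda$. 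Your additional remarks (the bump-function argument for surjectivity and shrinking $\sigma$ so that $\int_\Omega (g^{\prime}(u))^2\diff x>0$) are harmless refinements of points the paper treats implicitly.
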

\begin{proof}
	We verify that \Cref{thm:main} is applicable with $U=V=W^{2,p}(\Omega)\cap W^{1,p}_0(\Omega)$ and $Y= L^q(\Omega)$, so $Y^{\ast}=L^p(\Omega)$. Analyticity of $\mathcal{G}$ and $\mathcal{E}$ has been proven in \Cref{lem:Ganalytic,lem:EGAnalytic}. Clearly $V=W^{2,p}(\Omega)\cap W^{1,p}_0(\Omega)\hookrightarrow  Y^{\ast}$ densely. Moreover, $\mathcal{E}^{\prime}\in \mathcal{C}^{\omega}(V;Y^{\ast})$ by \Cref{lem:E'analytic}. The Fredholm property of $\mathcal{E}^{\prime\prime}(\bar{u})\colon V\to Y^{\ast}$ has been established in \Cref{lem:ExampleE''Fredholm}. By \Cref{lem:Ganalytic}, $\mathcal{G}^{\prime}$ extends analytically in the sense of assumption (iv) in \Cref{thm:main}. Moreover, the Fréchet derivative, $\mathcal{G}^{\prime\prime}(\bar{u})\colon V\to Y^{\ast}$ is compact by \Cref{lem:G''compact}. By assumption, $\mathcal{G}^{\prime}(\bar{u}) = g^{\prime}(\bar{u})\not\equiv 0$, hence it is surjective as an operator $Y\to\R$.
	Thus, by \Cref{thm:main}, $\mathcal{M}$ is locally a manifold near $\bar{u}$ and
	there exist $C, \sigma>0$ and $\theta\in (0, \frac{1}{2}]$ such that for $u\in \mathcal{M}$ with $\Norm{u-\bar{u}}{W^{2,p}}\leq \sigma$, we have
	\begin{align}\label{eq:ExampleLojaEstimate1}
	\abs{\mathcal{E}(u)-\mathcal{E}(\bar{u})}^{1-\theta}\leq C\Norm{\mathcal{E}^{\prime}(u)}{\overline{\mathcal{T}_u\mathcal{M}}^{\ast}}.
	\end{align}
	It remains to conclude \eqref{eq:ExampleLojaEstimate} from \eqref{eq:ExampleLojaEstimate1}. To that end, note that by \Cref{prop:propertiesTuM} we have $\overline{\mathcal{T}_u\mathcal{M}} = \ker\overline{\mathcal{G}^{\prime}(u)} = \{ w\in L^q(\Omega) \mid \int_\Omega g^{\prime}(u) w\diff x =0 \}$. Hence, for any $\lambda \in \R$, we have
	\begin{align}\label{eq:lambdaminimal}
	\Norm{\mathcal{E}^{\prime}(u)}{\overline{\mathcal{T}_u\mathcal{M}}^{\ast}} &= \sup_{\substack{w\in \ker\overline{\mathcal{G}^{\prime}(u)}}} \frac{\mathcal{E}^{\prime}(u)w}{\Norm{w}{L^q}} = \sup_{\substack{w\in \ker\overline{\mathcal{G}^{\prime}(u)}}} \frac{\int_{\Omega} (\mathcal{E}^{\prime}(u) - \lambda g^{\prime}(u) )w\diff x}{\Norm{w}{L^q}}\nonumber \\
	&\leq \sup_{w\in L^q(\Omega)} \frac{\int_\Omega(\mathcal{E}^{\prime}(u) -\lambda g^{\prime}(u))w\diff x}{\Norm{w}{L^q(\Omega)}} = \Norm{\mathcal{E}^{\prime}(u) - \lambda g^{\prime}(u)}{L^p(\Omega)}.
	\end{align}
	Choosing $\lambda \defeq \frac{\int_\Omega \mathcal{E}^{\prime}(u)g^{\prime}(u)\diff x}{\int_{\Omega}(g^{\prime}(u))^2\diff x}$ yields \eqref{eq:ExampleLojaEstimate}. 
\end{proof}
\begin{rem}
	\begin{enumerate}
		\item If $g(x) = x- \Gamma$ for some $\Gamma>0$ in \eqref{eq:DefConstraintG}, the energy $\mathcal{E}\vert_{\mathcal{M}}$ corresponds to the restriction of the surface area of $\graph(u)$ on the set of graphs  with fixed enclosed volume $\Gamma$ with the $\R^d\times\{0\}$-hyperplane.
		\item By considering the shifted energies $\tilde{\mathcal{E}}(u) = \mathcal{E}(u+\beta)$
		and $\tilde{\mathcal{G}}(u) = \mathcal{G}(u+\beta)$ for $u\in V=W^{2,p}(\Omega)\cap W^{1,p}_0(\Omega)$ and fixed $\beta\in W^{2,p}(\Omega)$, the result can be extended to general Dirichlet boundary data.
		\item Notice that in the proof of {\Cref{thm:ExampleLoja}}, we have some freedom in the choice of $\lambda$. Our choice is justified, since then any solution $u=u(t)$ of the equation
		\begin{align*}
		\partial_t u = -\mathcal{E}^{\prime}(u) + \lambda(u)\mathcal{G}^{\prime}(u)
		\end{align*}
		will preserve the constraint, i.e. $\mathcal{G}(u(t))=0$ for all $t$, provided $u$ is smooth enough.
		\item It is not clear, whether our choice of $\lambda$ is optimal in the sense that it minimizes the right hand side of \eqref{eq:lambdaminimal}. However, in the case $p=2$, so $d=1$, our choice of $\lambda$ yields the orthogonal projection (cf. \Cref{rem:HilbertSpace m=1}), and hence minimizes the right hand side of \eqref{eq:lambdaminimal}.
	\end{enumerate}
\end{rem}
\subsection{The Allen--Cahn equation}\label{subsec:AllenCahn}
The following reaction-diffusion equation plays an important role in mathematical physics, modeling the process of phase separation \cite{CahnHilliard},
\begin{align}\label{eq:AllenCahn}
\partial_t u  &= \varepsilon\Delta u + f(u) \text{ on }\Omega\times (0,T).
\end{align}
Here, $T, \varepsilon>0$ and $\Omega \subset \R^d$ is a domain with $\mathcal{C}^{1,1}$-boundary.
\Cref{eq:AllenCahn} is the $L^2$-gradient flow of the \emph{Ginzburg--Landau free energy}
\begin{align}\label{eq:energyAllenCahn}
\mathcal{E}_{\varepsilon} (u) \defeq \int_{\Omega} \left(\frac{\varepsilon}{2}\abs{\nabla u}^2 + W(u)\right) \diff x,
\end{align}
where $f(u)= -W^{\prime}(u)$. The function $W$ describes some potential, a common choice is $W(s) = \frac{1}{4\varepsilon}(1-s^2)^2,$ the \emph{double well potential}.

In their celebrated works \cite{Modica87,ModicaMortola77},  L. Modica and S. Mortola proved that as $\varepsilon\to 0$, the energy $\mathcal{E}_{\varepsilon}$ in \eqref{eq:energyAllenCahn} $\Gamma$-converges to the perimeter of a suitable level set.
For our result, we will consider $\varepsilon>0$ as being fixed, and therefore, we can assume $\varepsilon=1$ without loss of generality. {We} define $V\defeq W^{2,2}(\Omega)\cap W^{1,2}_0(\Omega)$ and write $\mathcal{E}\defeq \mathcal{E}_{1}$.

In this subsection, we will use \Cref{thm:main} to establish a {\L}ojasiewicz--Simon gradient inequality for the constrained energy $\mathcal{E}\vert_{\mathcal{M}}$, where
$\mathcal{M} = \{u\in V\mid \mathcal{G}(u)=0\}$ and
\begin{align*}
\mathcal{G}(u) \defeq \int_{\Omega} g(u)\diff x, 
\end{align*}
for a continuous function $g\colon\R\to\R$, cf. \Cref{subsec:isoperimetric}. 
\begin{thm}\label{thm:AllenCahnLoja}
	Let $d\leq 3$, $W, g\in \mathcal{C}^{\omega}(\R;\R)$. Suppose $\bar{u}\in \mathcal{M}$ is a constrained critical point of $\mathcal{E}\vert_{\mathcal{M}}$ with $g^{\prime}(\bar{u})\not\equiv 0$. Then $\mathcal{M}$ is locally a submanifold near $\bar{u}$ { and $\CalE\vert_{\mathcal{M}}$ }satisfies a {\L}ojasiewicz--Simon gradient inequality, i.e. there exist $C, \sigma>0$ and $\theta\in (0,\frac{1}{2}]$ such that for any $u\in \mathcal{M}$ with $\Norm{u-\bar{u}}{W^{2,2}(\Omega)}\leq \sigma$ we have
	\begin{align}\label{eq:AllenCahnLojaEstimate}
	\abs{\mathcal{E}(u)-\mathcal{E}(\bar{u})}^{1-\theta}\leq C\Norm{-\Delta u + W^{\prime}(u) - \frac{\langle -\Delta u+ W^{\prime}(u), g^{\prime}(u)\rangle_{L^2(\Omega)}}{\Norm{g^{\prime}(u)}{L^2(\Omega)}^2} g^{\prime}(u) }{L^2(\Omega)}.
	\end{align}
\end{thm}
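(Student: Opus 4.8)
The plan is to verify that the hypotheses of \Cref{cor:main hilbert} hold with $V = W^{2,2}(\Omega)\cap W^{1,2}_0(\Omega)$, $H = L^2(\Omega)$ and $m=1$; the argument runs parallel to the isoperimetric case of \Cref{subsec:isoperimetric} but is simpler, since the gradient term in $\mathcal{E}$ is quadratic. First I would record the two embeddings that will be used repeatedly: because $d\leq 3$ we have $2\cdot 2 > d$, so by the Sobolev embedding theorem $W^{2,2}(\Omega)\hookrightarrow\mathcal{C}(\overline{\Omega})$ continuously, and by the Rellich–Kondrachov theorem \cite[Theorem 7.26]{GilbargTrudinger} the embedding $V\hookrightarrow L^2(\Omega)$ is compact. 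Density of $V$ in $L^2(\Omega)$ is clear, giving hypothesis (i).

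For the analyticity hypotheses (ii) and (iv) I would write $\mathcal{E}(u) = \tfrac12 B(u,u) + \int_\Omega W(u)\diff x$, where $B(u,v) = \int_\Omega\langle\nabla u,\nabla v\rangle\diff x$ is a bounded bilinear form on $V$, hence analytic by \Cref{ex:multilinAnalytic}; the map $u\mapsto W(u)\in\mathcal{C}(\overline{\Omega})$ is analytic by \Cref{thm:nemytskiiAnalytic} composed with $V\hookrightarrow\mathcal{C}(\overline{\Omega})$, and integration is bounded linear, so $\mathcal{E}\in\mathcal{C}^{\omega}(V;\R)$ by \Cref{thm:companalytic}. Differentiating and integrating by parts (using $v\in W^{1,2}_0(\Omega)$) gives $\mathcal{E}^{\prime}(u)v = \int_\Omega(-\Delta u + W^{\prime}(u))v\diff x$, so $\mathcal{E}$ possesses the $L^2$-gradient $\nabla\mathcal{E}(u) = -\Delta u + W^{\prime}(u)\in L^2(\Omega)$, and $u\mapsto\nabla\mathcal{E}(u)$ is analytic because $u\mapsto-\Delta u$ is bounded linear and $u\mapsto W^{\prime}(u)\in\mathcal{C}(\overline{\Omega})\hookrightarrow L^2(\Omega)$ is analytic by \Cref{thm:nemytskiiAnalytic,thm:companalytic}. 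The same reasoning applied to $\mathcal{G}(u) = \int_\Omega g(u)\diff x$, exactly as in \Cref{lem:Ganalytic}, shows $\mathcal{G}$ is analytic with analytically varying $L^2$-gradient $\nabla\mathcal{G}(u) = g^{\prime}(u)\in\mathcal{C}(\overline{\Omega})\subset L^2(\Omega)$, giving (iv).

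It remains to check (iii), (v) and (vi). Differentiating once more, $\mathcal{E}^{\prime\prime}(\bar{u})v = -\Delta v + W^{\prime\prime}(\bar{u})v$, where $-\Delta\colon W^{2,2}(\Omega)\cap W^{1,2}_0(\Omega)\to L^2(\Omega)$ is an isomorphism by elliptic regularity \cite[Theorem 9.15]{GilbargTrudinger} and $v\mapsto W^{\prime\prime}(\bar{u})v$ is compact since $W^{\prime\prime}(\bar{u})\in\mathcal{C}(\overline{\Omega})$ and $V\hookrightarrow L^2(\Omega)$ is compact; hence $\mathcal{E}^{\prime\prime}(\bar{u})$ is a compact perturbation of an isomorphism, thus Fredholm of index zero by \Cref{prop:Fredholmproperties}, giving (iii). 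Likewise $(\nabla\mathcal{G})^{\prime}(\bar{u})v = g^{\prime\prime}(\bar{u})v$ is compact by the same compact-embedding argument (cf. \Cref{lem:G''compact}), giving (v). Finally $\mathcal{G}(\bar{u}) = 0$ by assumption and $\nabla\mathcal{G}(\bar{u}) = g^{\prime}(\bar{u})\not\equiv 0$ is a single nonzero element of $L^2(\Omega)$, hence linearly independent, giving (vi). \Cref{cor:main hilbert} with $m=1$ then yields that $\mathcal{M}$ is locally an analytic submanifold near $\bar{u}$ and that there exist $C,\sigma>0$, $\theta\in(0,\tfrac12]$ with $|\mathcal{E}(u)-\mathcal{E}(\bar{u})|^{1-\theta}\leq C\Norm{P(u)\nabla\mathcal{E}(u)}{L^2(\Omega)}$ for $\Norm{u-\bar{u}}{W^{2,2}}\leq\sigma$; inserting the explicit projection formula from \Cref{rem:HilbertSpace m=1} with $\nabla\mathcal{E}(u) = -\Delta u + W^{\prime}(u)$ and $\nabla\mathcal{G}(u) = g^{\prime}(u)$ gives exactly \eqref{eq:AllenCahnLojaEstimate}.

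The only genuinely delicate point is the analyticity of the Nemytskii operators $u\mapsto W(u)$, $u\mapsto g(u)$ and their derivatives: this is precisely what forces the restriction $d\leq 3$, since it guarantees $W^{2,2}(\Omega)\hookrightarrow\mathcal{C}(\overline{\Omega})$ and allows \Cref{thm:nemytskiiAnalytic} to be applied on $\mathcal{C}(\overline{\Omega})$ without assuming $W$ or $g$ polynomial (cf. \Cref{rem:HilbertBad}). Everything else is a routine transcription of \Cref{subsec:isoperimetric} to the present, slightly simpler, setting.
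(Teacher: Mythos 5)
Your proposal is correct and follows essentially the same route as the paper: verifying hypotheses (i)--(vi) of \Cref{cor:main hilbert} with $V=W^{2,2}(\Omega)\cap W^{1,2}_0(\Omega)$, $H=L^2(\Omega)$, $m=1$, using the embedding $W^{2,2}(\Omega)\hookrightarrow\mathcal{C}(\overline{\Omega})$ for $d\leq 3$ together with \Cref{thm:nemytskiiAnalytic}, the compact-perturbation argument for the Fredholm property of $\mathcal{E}^{\prime\prime}(\bar{u})$, and then \Cref{rem:HilbertSpace m=1} to obtain \eqref{eq:AllenCahnLojaEstimate}. The only difference is that you spell out the analyticity of $\mathcal{E}$ and the integration-by-parts computation of $\nabla\mathcal{E}$ a bit more explicitly than the paper does, which is fine.
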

\begin{proof}
	We can use the Hilbert space version of our main result, \Cref{cor:main hilbert}, with $U=V= W^{2,2}(\Omega)\cap W^{1,2}_0(\Omega)$ and $H=L^2(\Omega)$. Clearly, $V\hookrightarrow H$ densely. Moreover, we have $\nabla\mathcal{E}(u)= -\Delta u + W^{\prime}(u)$. Since $W^{\prime}\in \mathcal{C}^{\omega}(\R;\R)$, $V\hookrightarrow \mathcal{C}(\overline{\Omega})$ as $d\leq 3$ and $\mathcal{C}(\overline{\Omega}) \hookrightarrow H$, we conclude from \Cref{thm:nemytskiiAnalytic} that assumption (ii) in \Cref{cor:main hilbert} is satisfied. For $v\in V$, we have
	\begin{align*}
	(\nabla\mathcal{E})^{\prime}(\bar{u})v &= \dtzero \nabla\mathcal{E}(\bar{u}+tv) = -\Delta v  + \dtzero W^{\prime}(\bar{u}+tv) = -\Delta v + W^{\prime\prime}(\bar{u}) v.
	\end{align*}
	Thus, using \cite[Theorem 7.26]{GilbargTrudinger}, $(\nabla\mathcal{E})^{\prime}(\bar{u}) \colon V\to H$ is a compact perturbation of the Dirichlet-Laplacian $-\Delta\colon V\to H$, which is an isomorphism by standard elliptic theory \cite[Theorem 9.15]{GilbargTrudinger}. Thus, $(\nabla\mathcal{E})^{\prime}(\bar{u})$ is Fredholm of index zero by \Cref{prop:Fredholmproperties}. It follows from \Cref{lem:Ganalytic} that the $H$-gradient of $\mathcal{G}$ is given by $\nabla\mathcal{G}(u) = g^{\prime}(u)$, so assumption (iv) in \Cref{cor:main hilbert} is clearly satisfied. Similar to \Cref{lem:G''compact}, the Fréchet derivative $(\nabla\mathcal{G})^{\prime}(\bar{u})v = g^{\prime\prime}(\bar{u}) v$ is compact. Moreover, the assumption $g^{\prime}(\bar{u})\not\equiv 0$ means that assumption (vi) in \Cref{cor:main hilbert} is satisfied. Thus, \eqref{eq:AllenCahnLojaEstimate} follows from \eqref{eq:LSHilbert} and \Cref{rem:HilbertSpace m=1}.
\end{proof}
\begin{rem}
	The condition $d\leq 3$ is only needed to prove analyticity of the energy on $W^{2,2}(\Omega)\cap W^{1,2}_0(\Omega)$. However, like in \Cref{subsec:isoperimetric} one can consider different spaces to deal with the higher dimensional cases.
\end{rem}
\subsection{Area of surfaces of revolution with prescribed volume}\label{subsec:CMCRevolution}
In this subsection, we will discuss an application of \Cref{thm:main} to the area of a surface of revolution with prescribed boundary and prescribed inclosed volume. To that end, let $I=[a,b]\subset \R$ be an interval, $V\defeq W^{2,2}(I)\cap W^{1,2}_0(I)$, $H\defeq L^2(I)$ and consider
\begin{align*}
\mathcal{E}(u)\defeq 2\pi \int_{I} (1+u)\sqrt{1+(u^{\prime})^2} \diff x.
\end{align*}
This is the area of the surface of revolution $S$ obtained by rotating the graph of $1+u$ around the $x$-axis. Note that by requiring $u\in V$, we impose symmetric boundary conditions $1+u(a)=1+u(b)=1$. In order to ensure that $S$ is indeed a surface, we will study $\mathcal{E}$ on the set $U\defeq \{ u\in V\mid 1+u>0 \text{ on } {I}\}$.  Note that $U\subset V$ is open by the Sobolev embedding $W^{2,2}(I)\hookrightarrow \mathcal{C}(\overline{I})$.

We prescribe the volume inside $S$ by some fixed value $\nu\in \R$, i.e. we require
\begin{align*}
\mathcal{G}(u) \defeq \pi\int_{I}(1+u)^2\diff x -\nu =0.
\end{align*}
Unlike in \Cref{subsec:isoperimetric}, here we can work in the Hilbert space framework, since the embedding $W^{2,2}(I)\hookrightarrow \mathcal{C}^1(\overline{I})$ ensures that the energy $\mathcal{E}$ is analytic on $U$.
\begin{lem}
	The energies $\mathcal{E}$ and $\mathcal{G}$ are analytic on $U\subset V$. Moreover, for $u\in U, v\in V$ we have
	\begin{align}\label{eq:ExampleCMCrotE'}
	\mathcal{E}^{\prime}(u)v &= \langle \nabla\mathcal{E}(u),v\rangle = \int_{I} 2\pi \left[\sqrt{1+(u^{\prime})^2} - (1+u)\left(\frac{u^{\prime}}{\sqrt{1+(u^{\prime})^{2}}}\right)^{\prime}~\right]v\diff x\\
	\mathcal{G}^{\prime}(u)v &= \langle\nabla\mathcal{G}(u),v\rangle = \int_{I}\label{eq:ExampleCMCrotG'}  2\pi(1+u)v\diff x.
	\end{align}
	Moreover, the maps $U\ni u\mapsto\nabla\mathcal{E}(u)\in H$ and $U\ni u \mapsto\nabla\mathcal{G}(u)\in H$ are analytic.
\end{lem}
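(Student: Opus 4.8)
The plan is to mirror the arguments for the surface-area energy in \Cref{lem:Ganalytic,lem:EGAnalytic,lem:E'analytic}, now exploiting the one-dimensional Sobolev embedding $W^{2,2}(I)\hookrightarrow\mathcal{C}^1(\overline I)$ to control the nonlinearities. Note that the factor $1+u$ never occurs in a denominator and $1+(u^{\prime})^2\ge1$ everywhere, so analyticity will in fact hold on all of $V$, a fortiori on $U$.

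For $\mathcal{G}$: the map $u\mapsto(1+u)^2$ is a bounded affine map composed with the diagonal of a bounded bilinear map, hence analytic from $V\hookrightarrow\mathcal{C}(\overline I)$ into $\mathcal{C}(\overline I)$ by \Cref{ex:multilinAnalytic}; composing with the bounded linear integration functional and using \Cref{thm:companalytic} gives $\mathcal{G}\in\mathcal{C}^{\omega}(V;\R)$. Differentiating $t\mapsto\mathcal{G}(u+tv)=\pi\int_I(1+u+tv)^2\diff x-\nu$ at $t=0$ yields \eqref{eq:ExampleCMCrotG'}, and since $1+u\in\mathcal{C}(\overline I)\subset L^2(I)$ we read off $\nabla\mathcal{G}(u)=2\pi(1+u)\in H$; as $u\mapsto2\pi(1+u)$ is a bounded affine map $V\to H$, it is analytic.

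For $\mathcal{E}$: write it as a composition of analytic maps, namely the embedding $V\hookrightarrow\mathcal{C}^1(\overline I)$; the map $u\mapsto(u^{\prime})^2\colon\mathcal{C}^1(\overline I)\to\mathcal{C}(\overline I)$ (diagonal of a bounded bilinear map); the superposition operator $w\mapsto(1+w)^{1/2}$ on $\mathcal{C}(\overline I)$, analytic by \Cref{thm:nemytskiiAnalytic} since $x\mapsto(1+x)^{1/2}$ is analytic on $(-1,\infty)\supset[0,\infty)$; the bounded affine map $u\mapsto1+u$; the bounded bilinear pointwise multiplication on $\mathcal{C}(\overline I)$; and the integration functional. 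By \Cref{thm:companalytic} and \Cref{ex:multilinAnalytic} this gives $\mathcal{E}\in\mathcal{C}^{\omega}(V;\R)$. For the derivative, compute
\begin{align*}
\mathcal{E}^{\prime}(u)v=\dtzero\mathcal{E}(u+tv)=2\pi\int_I\left[v\sqrt{1+(u^{\prime})^2}+(1+u)\frac{u^{\prime}v^{\prime}}{\sqrt{1+(u^{\prime})^2}}\right]\diff x
\end{align*}
and integrate the second term by parts; since $v\in W^{1,2}_0(I)$ the boundary contributions vanish, giving the representation \eqref{eq:ExampleCMCrotE'}. Here one uses that $\tfrac{u^{\prime}}{\sqrt{1+(u^{\prime})^2}}\in W^{1,2}(I)$ (chain rule, using $1+(u^{\prime})^2\ge1$) with derivative $\tfrac{u^{\prime\prime}}{(1+(u^{\prime})^2)^{3/2}}\in L^2(I)$, so that together with $1+u,\sqrt{1+(u^{\prime})^2}\in\mathcal{C}(\overline I)$ the resulting integrand lies in $L^2(I)=H$ and equals $\nabla\mathcal{E}(u)$. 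Analyticity of $u\mapsto\nabla\mathcal{E}(u)\colon U\to H$ then follows by assembling it from $u\mapsto u^{\prime\prime}\colon V\to L^2(I)$ and $u\mapsto u^{\prime}\colon V\to\mathcal{C}(\overline I)$ (bounded linear), the maps $u\mapsto(1+(u^{\prime})^2)^{-1/2},\,(1+(u^{\prime})^2)^{-3/2}\colon V\to\mathcal{C}(\overline I)$ (analytic by \Cref{thm:nemytskiiAnalytic} as above), $u\mapsto1+u$, the bounded bilinear pointwise products $\mathcal{C}(\overline I)\times\mathcal{C}(\overline I)\to\mathcal{C}(\overline I)$ and $\mathcal{C}(\overline I)\times L^2(I)\to L^2(I)$, and the inclusion $\mathcal{C}(\overline I)\hookrightarrow L^2(I)$, followed by \Cref{thm:companalytic}.

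The only step that is not purely formal is the integration by parts: one must verify that the flux term $(1+u)\tfrac{u^{\prime}}{\sqrt{1+(u^{\prime})^2}}$ lies in $W^{1,2}(I)$, so that its boundary values make sense and vanish against $v\in W^{1,2}_0(I)$, and that after moving the derivative the coefficient still lies in $L^2(I)$, so that $\mathcal{E}^{\prime}(u)$ is genuinely represented by an element of $H$. Both follow from $W^{2,2}(I)\hookrightarrow\mathcal{C}^1(\overline I)$ and the uniform lower bound $1+(u^{\prime})^2\ge1$.
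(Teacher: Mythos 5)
Your proposal takes exactly the route the paper intends: the paper's own proof of this lemma is the single line ``Similar to \Cref{lem:EGAnalytic,lem:Ganalytic,lem:E'analytic}'', and your argument is precisely that adaptation, with the correct replacements ($W^{2,2}(I)\hookrightarrow\mathcal{C}^1(\overline{I})$ instead of $W^{2,p}\hookrightarrow\mathcal{C}^1$, $H=L^2(I)$ instead of $L^p$), the correct observation that no smallness/positivity of $1+u$ is needed for analyticity, and the right regularity checks for the integration by parts (flux term in $W^{1,2}(I)$, coefficient in $L^2(I)$).

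One caveat concerns your final step ``integrate the second term by parts \dots giving the representation \eqref{eq:ExampleCMCrotE'}''. What the integration by parts literally produces is
\begin{align*}
\mathcal{E}^{\prime}(u)v \;=\; 2\pi\int_I\Big[\sqrt{1+(u^{\prime})^2}-\Big((1+u)\tfrac{u^{\prime}}{\sqrt{1+(u^{\prime})^2}}\Big)^{\prime}\Big]v\diff x
\;=\;2\pi\int_I\Big[\tfrac{1}{\sqrt{1+(u^{\prime})^2}}-(1+u)\Big(\tfrac{u^{\prime}}{\sqrt{1+(u^{\prime})^2}}\Big)^{\prime}\Big]v\diff x,
\end{align*}
i.e.\ the derivative falls on the whole product. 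This agrees with \eqref{eq:ExampleCMCrotE'} only if the prime there is read as acting on $(1+u)\tfrac{u^{\prime}}{\sqrt{1+(u^{\prime})^2}}$; as typeset (prime on the fraction only, first term $\sqrt{1+(u^{\prime})^2}$) the two expressions differ by $2\pi\tfrac{(u^{\prime})^2}{\sqrt{1+(u^{\prime})^2}}$, and the typeset version is not the $L^2$-gradient of $\mathcal{E}$ (setting it to zero does not give the catenary equation $1+(f^{\prime})^2=ff^{\prime\prime}$ for $f=1+u$, whereas your expression does). The discrepancy is a slip in the paper's formula rather than in your argument --- note it propagates into \eqref{eq:LSExampleCMCrevolution} and \eqref{eq:ExampleCMCrotLambda}, whose $\lambda(u)$ is computed from the literal formula --- but you should either state the result of the integration by parts explicitly in the form above or flag the mismatch, rather than assert that the computation yields \eqref{eq:ExampleCMCrotE'} verbatim. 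All remaining parts of your proof (analyticity of $\mathcal{E}$, $\mathcal{G}$, $\nabla\mathcal{G}$, and the assembly of $u\mapsto\nabla\mathcal{E}(u)$ from Nemytskii and bounded multilinear pieces via \Cref{thm:nemytskiiAnalytic}, \Cref{ex:multilinAnalytic} and \Cref{thm:companalytic}) are correct and match the cited lemmas.
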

\begin{proof}
	Similar to \Cref{lem:EGAnalytic,lem:Ganalytic,lem:E'analytic}.
\end{proof}
\begin{lem}
	The Fréchet derivative $(\nabla\mathcal{E})^{\prime}(u)\colon V\to H$ is Fredholm of index zero.
\end{lem}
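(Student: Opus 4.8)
The plan is to follow the same strategy as in \Cref{lem:ExampleE''Fredholm}: compute $(\nabla\mathcal{E})^{\prime}(u)$ explicitly, split off its second-order principal part, show that this principal part is an isomorphism $V\to H$ by one-dimensional elliptic theory, and treat the remaining lower-order terms as a compact perturbation so that \Cref{prop:Fredholmproperties} applies.

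First I would expand $\nabla\mathcal{E}(u)$ from \eqref{eq:ExampleCMCrotE'}. Since $\big(\frac{u^{\prime}}{\sqrt{1+(u^{\prime})^2}}\big)^{\prime} = \frac{u^{\prime\prime}}{(1+(u^{\prime})^2)^{3/2}}$, we have
\[
\nabla\mathcal{E}(u) = 2\pi\sqrt{1+(u^{\prime})^2} - \frac{2\pi(1+u)u^{\prime\prime}}{(1+(u^{\prime})^2)^{3/2}}.
\]
Differentiating $t\mapsto\nabla\mathcal{E}(u+tv)$ at $t=0$, the only term containing $v^{\prime\prime}$ comes from differentiating the factor $u^{\prime\prime}$ and equals $-\frac{2\pi(1+u)}{(1+(u^{\prime})^2)^{3/2}}v^{\prime\prime}$; every other contribution involves only $v$ or $v^{\prime}$. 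Hence
\[
(\nabla\mathcal{E})^{\prime}(u)v = -\frac{2\pi(1+u)}{(1+(u^{\prime})^2)^{3/2}}\,v^{\prime\prime} + Kv \eqdef -Av + Kv,
\]
where $K\colon V\to H$ collects the terms of order at most one in $v$, with coefficients assembled from $u$, $u^{\prime}\in\mathcal{C}(\overline{I})$ (using $W^{2,2}(I)\hookrightarrow\mathcal{C}^1(\overline{I})$) and $u^{\prime\prime}\in L^2(I)$. Since the embedding $W^{2,2}(I)\hookrightarrow W^{1,2}(I)$ is compact (Rellich–Kondrachov, \cite[Theorem 7.26]{GilbargTrudinger}) and $W^{2,2}(I)\hookrightarrow\mathcal{C}(\overline{I})$, multiplication by these coefficients keeps the maps $v\mapsto v^{\prime}$ and $v\mapsto v$ compact into $L^2(I)$, so $K$ is compact, exactly as in \Cref{lem:ExampleE''Fredholm}.

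It then remains to observe that, because $u\in U$ forces $1+u>0$ on $\overline{I}$ and $u\in\mathcal{C}^1(\overline{I})$, the coefficient $c\defeq\frac{2\pi(1+u)}{(1+(u^{\prime})^2)^{3/2}}$ lies in $\mathcal{C}(\overline{I})$ and is bounded below by a positive constant; thus $A=c\,\frac{d^2}{dx^2}$ is uniformly elliptic, and $A\colon W^{2,2}(I)\cap W^{1,2}_0(I)\to L^2(I)$ is an isomorphism by \cite[Theorem 9.15]{GilbargTrudinger} (in one dimension one may alternatively solve $v^{\prime\prime}=-f/c$ with homogeneous boundary data directly). An isomorphism is Fredholm of index zero, so by \Cref{prop:Fredholmproperties} the operator $(\nabla\mathcal{E})^{\prime}(u)=-A+K\colon V\to H$ is Fredholm of index zero. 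There is no genuine obstacle here; the only points requiring slight care are the correct identification of the leading coefficient and the use of $u\in U$, rather than merely $u\in V$, to guarantee its strict positivity and hence ellipticity.
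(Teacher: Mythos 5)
Your proof is correct and follows essentially the same route as the paper: expand $(\nabla\mathcal{E})^{\prime}(u)v$ as a second-order term $\mp\frac{2\pi(1+u)}{(1+(u^{\prime})^2)^{3/2}}v^{\prime\prime}$ plus lower-order terms $Kv$, use $u\in U$ and $W^{2,2}(I)\hookrightarrow\mathcal{C}^1(\overline{I})$ to get uniform ellipticity and hence an isomorphism $V\to H$ via \cite[Theorem 9.15]{GilbargTrudinger}, and conclude by compact perturbation (\Cref{prop:Fredholmproperties}). The only difference is a sign convention in the leading coefficient, which is immaterial for the Fredholm index.
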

\begin{proof}
	Similar to \Cref{lem:ExampleE''Fredholm}, a short computation yields for $u\in U$ and $v\in V$
	\begin{align*}
	(\nabla\mathcal{E})^{\prime}(u) v = \frac{2\pi(1+u)}{(1+(u^{\prime})^2)^{\frac{3}{2}}} v^{\prime\prime} + K v \eqdef -Av + Kv,
	\end{align*}
	where $K\colon V\to H$ only contains terms in $v$ of order 1 or lower and is hence compact by \cite[Theorem 7.26]{GilbargTrudinger}. Note that like in \Cref{lem:ExampleE''Fredholm}, $A$ is an elliptic operator in $v$ by the embeddings $V\hookrightarrow \mathcal{C}^1(\overline{I})\hookrightarrow \mathcal{C}(\overline{I})$ and the requirement $u\in U$.	
	Thus, $\mathcal{E}^{\prime\prime}(u)$ is Fredholm of index zero by \Cref{prop:Fredholmproperties}.
\end{proof}
\begin{lem}
	The operator $(\nabla\mathcal{G})^{\prime}(u)\colon V\to H$ is compact for all $u\in U$.
\end{lem}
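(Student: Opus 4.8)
The plan is to read off the derivative of $\nabla\mathcal{G}$ directly from the formula established in the previous lemma and then recognize it as a (rescaled) compact Sobolev embedding. Since \eqref{eq:ExampleCMCrotG'} gives $\nabla\mathcal{G}(u) = 2\pi(1+u)\in H$, the map $U\ni u\mapsto\nabla\mathcal{G}(u)\in H$ is affine in $u$. Hence its Fréchet derivative is the constant linear operator
\begin{align*}
(\nabla\mathcal{G})^{\prime}(u)v = \dtzero \nabla\mathcal{G}(u+tv) = \dtzero~ 2\pi(1+u+tv) = 2\pi v, \qquad v\in V,
\end{align*}
which is in particular independent of $u\in U$.

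Next I would observe that this operator factors as $(\nabla\mathcal{G})^{\prime}(u) = 2\pi\cdot\iota$, where $\iota\colon V = W^{2,2}(I)\cap W^{1,2}_0(I)\hookrightarrow L^2(I) = H$ is the inclusion. By the Rellich–Kondrachov Theorem \cite[Theorem 7.26]{GilbargTrudinger}, the embedding $W^{2,2}(I)\hookrightarrow L^2(I)$ is compact, so $\iota$ is a compact operator. Scalar multiplication by $2\pi$ is bounded, and the composition of a compact operator with a bounded one is compact; therefore $(\nabla\mathcal{G})^{\prime}(u)\colon V\to H$ is compact for every $u\in U$.

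There is essentially no obstacle here: the only point worth noting is that, in contrast with the isoperimetric example of \Cref{subsec:isoperimetric} where $\nabla\mathcal{G}$ depended nonlinearly on $u$ through $g^{\prime}$, here the quadratic form of $\mathcal{G}$ makes its $H$-gradient affine, so that $(\nabla\mathcal{G})^{\prime}$ degenerates to a multiple of the compact Sobolev embedding and assumption (v) of \Cref{cor:main hilbert} is immediate.
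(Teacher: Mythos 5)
Your proof is correct and follows essentially the same route as the paper, which simply refers back to the ideas of \Cref{lem:G''compact}: the derivative is multiplication by the (here constant) second derivative of the integrand composed with the compact embedding $V\hookrightarrow H$ from the Rellich--Kondrachov Theorem. Your explicit computation that $(\nabla\mathcal{G})^{\prime}(u)v = 2\pi v$ is exactly what that reference amounts to in this example.
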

\begin{proof}
	The statement follows with the same ideas as in \Cref{lem:G''compact}.
\end{proof}
Consequently, similar to \Cref{thm:ExampleLoja,thm:AllenCahnLoja} we get the following result.
\begin{thm}
	Let $\mathcal{M}\defeq \{u\in U\mid \mathcal{G}(u)=0\}$. Suppose $\bar{u}\in \mathcal{M}$ is a constrained critical point of $\mathcal{E}\vert_{\mathcal{M}}$. Then $\mathcal{M}$ is locally a submanifold near $\bar{u}$ and satisfies a {\L}ojasiewicz--Simon gradient inequality, i.e. there exist $C, \sigma>0$ and $\theta\in (0,\frac{1}{2}]$ such that for any $u\in \mathcal{M}$ with $\norm{u-\bar{u}}{W^{2,2}(I)}\leq \sigma$ we have
	\begin{align}\label{eq:LSExampleCMCrevolution}
	\abs{\mathcal{E}(u)-\mathcal{E}(\bar{u})}^{1-\theta}\leq C  \Bigg\Vert2\pi\sqrt{1+(u^{\prime})^2} - 2\pi (1+u)\left(\frac{u^{\prime}}{\sqrt{1+(u^{\prime})^{2}}}\right)^{\prime} - \lambda(u) 2\pi(1+u)\Bigg\Vert_{L^2(I)},
	\end{align}
	where the scalar $\lambda(u)$ is given by
	\begin{align}\label{eq:ExampleCMCrotLambda}
	\lambda(u)= \frac{3}{2\nu} \mathcal{E}(u) - \left.\frac{\pi}{\nu} \frac{u^{\prime}}{\sqrt{1+(u^{\prime})^{2}}}\right\vert_{a}^{b} - \frac{2\pi}{\nu} \int_{I} \frac{1+u}{\sqrt{1+(u^{\prime})^{2}}} \diff x.
	\end{align}
\end{thm}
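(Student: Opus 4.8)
The plan is to apply the Hilbert space version of the main result, \Cref{cor:main hilbert}, with the open set $U=\{u\in V\mid 1+u>0\text{ on }\overline{I}\}$, with $V=W^{2,2}(I)\cap W^{1,2}_0(I)$ and $H=L^2(I)$, and then to read off the concrete form of the inequality from the case $m=1$ treated in \Cref{rem:HilbertSpace m=1}. First I would check the six hypotheses of \Cref{cor:main hilbert}. Hypothesis (i), density of $V$ in $H$, is clear. Hypotheses (ii) and (iv), i.e.\ analyticity of $\mathcal{E}$ and $\mathcal{G}$ together with analyticity of the gradient maps $u\mapsto\nabla\mathcal{E}(u)$ and $u\mapsto\nabla\mathcal{G}(u)$ on $U$, are the content of the first lemma of this subsection (proved exactly as \Cref{lem:EGAnalytic}, \Cref{lem:Ganalytic} and \Cref{lem:E'analytic}, using the embedding $W^{2,2}(I)\hookrightarrow\mathcal{C}^1(\overline{I})$ and that $1+u$ is bounded away from $0$ on $U$). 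Hypothesis (iii), the Fredholm property of $(\nabla\mathcal{E})^{\prime}(\bar{u})\colon V\to H$, is the second lemma, and hypothesis (v), compactness of $(\nabla\mathcal{G})^{\prime}(\bar{u})$, is the third lemma. Finally, hypothesis (vi) holds because $\bar{u}\in\mathcal{M}$ gives $\mathcal{G}(\bar{u})=0$, while $\nabla\mathcal{G}(\bar{u})=2\pi(1+\bar{u})\not\equiv 0$ since $1+\bar{u}>0$ on $\overline{I}$. Thus \Cref{cor:main hilbert} applies: $\mathcal{M}$ is locally an analytic submanifold of $V$ of codimension one near $\bar{u}$, and since $m=1$, \Cref{rem:HilbertSpace m=1} yields, for $u\in\mathcal{M}$ with $\Norm{u-\bar{u}}{W^{2,2}(I)}\leq\sigma$,
\begin{align*}
\abs{\mathcal{E}(u)-\mathcal{E}(\bar{u})}^{1-\theta}\leq C\,\Norm{\nabla\mathcal{E}(u)-\lambda(u)\,\nabla\mathcal{G}(u)}{L^2(I)},\qquad \lambda(u)=\frac{\langle\nabla\mathcal{G}(u),\nabla\mathcal{E}(u)\rangle}{\Norm{\nabla\mathcal{G}(u)}{L^2(I)}^{2}}.
\end{align*}
Substituting \eqref{eq:ExampleCMCrotE'} and \eqref{eq:ExampleCMCrotG'} turns the right-hand side into the expression in \eqref{eq:LSExampleCMCrevolution}, so it only remains to identify $\lambda(u)$ with \eqref{eq:ExampleCMCrotLambda}.

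The one computation with any content is this rewriting of $\lambda(u)$ for $u\in\mathcal{M}$. For the denominator, the constraint $\mathcal{G}(u)=0$ reads $\pi\int_I(1+u)^2\diff x=\nu$, so that $\Norm{\nabla\mathcal{G}(u)}{L^2(I)}^{2}=4\pi^2\int_I(1+u)^2\diff x=4\pi\nu$; in particular $\nu>0$, so dividing by it is legitimate. For the numerator I would use \eqref{eq:ExampleCMCrotE'} and \eqref{eq:ExampleCMCrotG'} to write $\langle\nabla\mathcal{G}(u),\nabla\mathcal{E}(u)\rangle = 4\pi^2\int_I(1+u)\sqrt{1+(u^{\prime})^2}\diff x - 4\pi^2\int_I(1+u)^2\big(\tfrac{u^{\prime}}{\sqrt{1+(u^{\prime})^2}}\big)^{\prime}\diff x$, recognize that $\int_I(1+u)\sqrt{1+(u^{\prime})^2}\diff x=\tfrac{1}{2\pi}\mathcal{E}(u)$, and integrate the second term by parts. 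Because $1+u(a)=1+u(b)=1$ (as $u\in W^{1,2}_0(I)$), the boundary contribution is $-4\pi^2\big[\tfrac{u^{\prime}}{\sqrt{1+(u^{\prime})^2}}\big]_a^b$, and the interior term, after using $\tfrac{(u^{\prime})^2}{\sqrt{1+(u^{\prime})^2}}=\sqrt{1+(u^{\prime})^2}-\tfrac{1}{\sqrt{1+(u^{\prime})^2}}$, produces another multiple of $\mathcal{E}(u)$ together with a multiple of $\int_I\tfrac{1+u}{\sqrt{1+(u^{\prime})^2}}\diff x$. Collecting the terms and dividing by $4\pi\nu$ gives precisely \eqref{eq:ExampleCMCrotLambda}.

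I do not anticipate a genuine obstacle here: all of the analytic substance — analyticity via the Sobolev embeddings, the Fredholm property of the linearized gradient, and the compactness of $(\nabla\mathcal{G})^{\prime}(\bar{u})$ — has been isolated in the three preceding lemmas, exactly in parallel with \Cref{subsec:isoperimetric} and \Cref{subsec:AllenCahn}. The only points requiring care are the bookkeeping in the integration by parts for $\lambda(u)$ and the correct use of the symmetric boundary conditions $1+u(a)=1+u(b)=1$; one should also keep track of the sign convention in \eqref{eq:ExampleCMCrotE'} so that the term $2\pi(1+u)\big(\tfrac{u^{\prime}}{\sqrt{1+(u^{\prime})^2}}\big)^{\prime}$ enters \eqref{eq:LSExampleCMCrevolution} with the correct sign.
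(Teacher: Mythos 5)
Your proposal is correct and follows essentially the same route as the paper: apply \Cref{cor:main hilbert} with $H=L^2(I)$ (the hypotheses other than (vi) being exactly the preceding three lemmas), verify (vi) via $\nabla\mathcal{G}(\bar{u})=2\pi(1+\bar{u})\not\equiv 0$, and read off the inequality from \Cref{rem:HilbertSpace m=1}. Your integration-by-parts computation of $\lambda(u)$, using $\|\nabla\mathcal{G}(u)\|_{L^2(I)}^2=4\pi\nu$ from the constraint and the boundary values $u(a)=u(b)=0$, is exactly the verification the paper leaves to the reader and it checks out.
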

\begin{proof}
	We use \Cref{cor:main hilbert}. It remains to check condition (vi). Clearly, $\mathcal{G}(\bar{u})=0$ and we have $\nabla\mathcal{G}(\bar{u}) = 2\pi(1+\bar{u})\not\equiv0\in H=L^{2}(I)$, since $\bar{u}\in U$. Thus $\mathcal{E}\vert_{\mathcal{M}}$ satisfies a {\L}ojasiewicz--Simon gradient inequality at $\bar{u}$. By \Cref{rem:HilbertSpace m=1}, we get 
	\begin{align*}
	\abs{\mathcal{E}(u)-\mathcal{E}(\bar{u})}^{1-\theta}\leq  C \Big\Vert\nabla\mathcal{E}(u) - \frac{\langle\nabla\mathcal{G}(u), \nabla\mathcal{E}(u)\rangle}{\Norm{\nabla\mathcal{G}(u)}{L^2(I)}^2} \nabla\mathcal{G}(u)\Big\Vert_{L^2(I)}.
	\end{align*}
	Using \eqref{eq:ExampleCMCrotE'} and \eqref{eq:ExampleCMCrotG'}  yields \eqref{eq:LSExampleCMCrevolution} with $\lambda(u) = \frac{\langle\nabla\mathcal{E}(u), \nabla\mathcal{G}(u)\rangle}{\norm{\nabla\mathcal{G}(u)}{L^2(I)}^2}$. The explicit formula \eqref{eq:ExampleCMCrotLambda} for the scalar $\lambda$ can be proven using integration by parts and $\mathcal{G}(u)=0$.
\end{proof}

\appendix
\renewcommand{\thesection}{\Alph{section}}
\section{Proof of \texorpdfstring{\Cref{thm:FredholmLoja}}{Theorem 1.2}}\label{subsec:AppendixLoja}

Following the notation in \cite[Section 3]{Chill03}, let $V$ be a Banach space,  $\mathcal{E}\in \mathcal{C}^2(U;\R)$ with $U\subset V$ open and let $\varphi\in U$ be a critical point of $\mathcal{E}$. Then $M\defeq\mathcal{E}^{\prime}\in \mathcal{C}^1(U;V^{\ast})$ and $L\defeq \mathcal{E}^{\prime\prime}\in \mathcal{C}(U;\mathcal{L}(V,V^{\ast}))$. In \cite{Chill03}, the {\L}ojasiewicz--Simon gradient inequality is proven under the following assumptions.
\begin{hyp}[Hypothesis 3.2. in {\cite{Chill03}}]\label{hyp3.2}
	The kernel $V_0:= \ker L(\varphi)$ is complemented, i.e. there exists a projection $Q\in \mathcal{L}(V)$ such that $\Image Q = V_0$.
\end{hyp}
Clearly, under the assumption of \Cref{hyp3.2}, we have $V = V_0\oplus V_1$ with $V_1 = \ker Q$, and similarly $V^{\ast} = V_0^{\ast}\oplus V_1^{\ast}$ with $V_0^{\ast} = \Image Q^{\ast}$ and $V_1^{\ast} = \ker Q^{\ast}$. Note that this abuse of notation is justified since $V_1^{\ast} = \{ v^{\ast}\in V^{\ast}\mid v^{\ast}(w) = 0 \text{ for all }w\in V_0\}$ is isomorphic to the dual of $V_0$ and similarly for $V_0^{\ast}$.
\begin{hyp}[Hypothesis 3.4. in {\cite{Chill03}}]\label{hyp3.4}
	There exists a Banach space $W$ with the following properties.
	\begin{enumerate}[(i)]
		\item $W\hookrightarrow V^{\ast}$ continuously,
		\item the adjoint $Q^{\ast}\in \mathcal{L}(V^{\ast})$ of the projection in \Cref{hyp3.2} leaves $W$ invariant,
		\item $M\in\mathcal{C}^{1}(U;W)$,
		\item $\Image L(\varphi) = V_1^{\ast}\cap W$.
	\end{enumerate}
\end{hyp}
\begin{thm}[Corollary 3.11 in {\cite{Chill03}}]\label{cor:3.11}
	Suppose $\varphi\in U$ is a critical point of ${\mathcal{E}}$ and assume \Cref{hyp3.2,hyp3.4} hold. Assume that there exists Banach spaces $X\subset V$ and $Y\subset W$ such that
	\begin{enumerate}[(i)]
		\item the spaces $X$ and $Y$ are invariant under $Q$ and $Q^{\ast}$, respectively,
		\item the restriction of the derivative $M$ to $U\cap X$ is analytic in a neighborhood of $\varphi$ with values in $Y$,
		\item $\ker L(\varphi)$ is contained in $X$ and finite-dimensional,
		\item $\Image L(\varphi)\vert_{X} = \ker Q^{\ast}\cap Y$.
	\end{enumerate}
	Then ${\mathcal{E}}$ satisfies the {\L}ojasiewicz--Simon gradient inequality at $\varphi$.
\end{thm}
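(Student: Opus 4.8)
The plan is to reconstruct the proof by \emph{Lyapunov--Schmidt reduction}, in the spirit of Simon and Chill, so as to reduce the assertion to the finite-dimensional {\L}ojasiewicz inequality \Cref{thm:LSfinitedim}. Write $M\defeq\mathcal{E}^{\prime}$ and $L\defeq\mathcal{E}^{\prime\prime}$, split $V = V_0\oplus V_1$ with $V_0 = \ker L(\varphi)$ and $V_1 = \ker Q$, and $W = W_0\oplus W_1$ with $W_0 = \Image Q^{\ast}\cap W$ and $W_1 = \ker Q^{\ast}\cap W$; this is legitimate because $Q^{\ast}$ leaves $W$ invariant by \Cref{hyp3.4}, and $W_1$ is a closed subspace of $W$. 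Since $\Image Q = V_0 = \ker L(\varphi)$ one has $L(\varphi) = L(\varphi)(\Id - Q)$, and since $\Image L(\varphi) = W_1$ (condition (iv) of \Cref{hyp3.4}) one has $Q^{\ast}L(\varphi) = 0$; hence $\hat{L}\defeq L(\varphi)\vert_{V_1}\colon V_1\to W_1$ is onto, and it is injective since $V_1\cap V_0 = \{0\}$, so by the open mapping theorem $\hat{L}$ is a topological isomorphism. Using the invariance of $X,Y$ under $Q,Q^{\ast}$ (condition (i) of the corollary), the inclusion $V_0\subset X$ (condition (iii)), and $\Image L(\varphi)\vert_X = \ker Q^{\ast}\cap Y = W_1\cap Y$ (condition (iv)), the same argument shows $\hat{L}$ restricts to an isomorphism $V_1\cap X\to W_1\cap Y$.

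The second step is the reduction itself. Consider $\Psi(\xi,\eta)\defeq(\Id - Q^{\ast})M(\varphi+\xi+\eta)$ on a neighborhood of $(0,0)$ in $V_0\times(V_1\cap X)$ with values in $W_1\cap Y$; it is analytic there by condition (ii) of the corollary together with the invariance of $Y$ under $Q^{\ast}$, with $\Psi(0,0) = 0$ (as $\varphi$ is critical) and $\partial_{\eta}\Psi(0,0) = \hat{L}\vert_{V_1\cap X}$, an isomorphism by the first step. The analytic implicit function theorem then yields an analytic map $\xi\mapsto\eta(\xi)$ on a neighborhood of $0$ in the finite-dimensional space $V_0$ (no scale issue on the domain since $V_0\subset X$ is finite-dimensional by (iii)) with $\eta(0) = 0$, $\eta^{\prime}(0) = 0$, and $(\Id - Q^{\ast})M(\varphi+\xi+\eta(\xi)) = 0$. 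Define the reduced energy $f(\xi)\defeq\mathcal{E}(\varphi+\xi+\eta(\xi))$; it is analytic near $0\in V_0$, $f^{\prime}(0) = 0$, and since $\eta^{\prime}(\xi)h\in V_1 = \ker Q$ and $M(\varphi+\xi+\eta(\xi)) = Q^{\ast}M(\varphi+\xi+\eta(\xi))$ by the reduction equation, the chain rule gives $f^{\prime}(\xi) = Q^{\ast}M(\varphi+\xi+\eta(\xi))$. Applying \Cref{thm:LSfinitedim} to $f$ produces $C^{\prime},\sigma^{\prime}>0$ and $\theta\in(0,\tfrac12]$ with $\abs{f(\xi)-f(0)}^{1-\theta}\le C^{\prime}\Norm{Q^{\ast}M(\varphi+\xi+\eta(\xi))}{W}$ for $\Norm{\xi}{}\le\sigma^{\prime}$.

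The third step is the gluing estimate. For $u = \varphi+v+w$ near $\varphi$ in $V$ with $v\in V_0$, $w\in V_1$, set $\xi\defeq v$. A first-order Taylor expansion of $(\Id - Q^{\ast})M$ around $\varphi+\xi+\eta(\xi)$, combined with the fact that $(\Id - Q^{\ast})L\big(\varphi+\xi+\eta(\xi)\big)\vert_{V_1}$ is close to the isomorphism $\hat{L}$ for $\xi$ small, gives $\Norm{w-\eta(\xi)}{V}\le C\Norm{(\Id - Q^{\ast})M(u)}{W}\le C\Norm{M(u)}{W}$ after shrinking the neighborhood. Expanding $\mathcal{E}$ to second order at $\varphi+\xi+\eta(\xi)$ and using $M(\varphi+\xi+\eta(\xi)) = Q^{\ast}M(\varphi+\xi+\eta(\xi))$ together with $w-\eta(\xi)\in\ker Q$ (so the linear term drops) yields $\abs{\mathcal{E}(u)-\mathcal{E}(\varphi)-(f(\xi)-f(0))}\le C\Norm{M(u)}{W}^{2}$, while the same Taylor expansion for $M$ gives $\Norm{Q^{\ast}M(\varphi+\xi+\eta(\xi))}{W}\le\Norm{Q^{\ast}M(u)}{W}+C\Norm{w-\eta(\xi)}{V}\le C\Norm{M(u)}{W}$. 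Combining these with the inequality for $f$ from the second step, one obtains $\abs{\mathcal{E}(u)-\mathcal{E}(\varphi)}\le C\Norm{M(u)}{W}^{1/(1-\theta)}+C\Norm{M(u)}{W}^{2}$; since $\theta\le\tfrac12$ forces $1/(1-\theta)\le 2$, the second term is absorbed on a small enough neighborhood, giving $\abs{\mathcal{E}(u)-\mathcal{E}(\varphi)}^{1-\theta}\le C\Norm{M(u)}{W}$, which is the {\L}ojasiewicz--Simon gradient inequality at $\varphi$.

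The main obstacle is the interplay of the two scales in the third step: analyticity — hence the implicit function theorem producing $\eta$ — is only available on the ``small'' spaces $X\subset V$, $Y\subset W$, yet the gradient inequality must be closed on the ``large'' spaces $V$, $W$, where only $M\in\mathcal{C}^{1}(U;W)$ is known. One must therefore verify that the Lipschitz bound for $\eta$ and all Taylor remainders are controlled by $\Norm{M(u)}{W}$ uniformly, which hinges on $L(\varphi)\vert_{V_1}$ being an isomorphism on \emph{both} scales simultaneously; the careful bookkeeping of which norm dominates which quantity, and the exact exponents appearing in the remainders, is where the argument is delicate.
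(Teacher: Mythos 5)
This statement is imported verbatim from \cite{Chill03} and the paper offers no proof of it --- it is used as a black box (the appendix only shows how to \emph{deduce} \Cref{thm:FredholmLoja} from it), so there is no in-paper proof to compare against. Your Lyapunov--Schmidt reduction is, however, exactly the argument by which this result is established in \cite{Chill03} (and, before that, by Simon): the splitting $V=V_0\oplus V_1$, $W=W_0\oplus W_1$, the isomorphism $L(\varphi)\vert_{V_1\cap X}\colon V_1\cap X\to W_1\cap Y$, the analytic implicit-function step producing $\eta(\xi)$ on the finite-dimensional kernel, the finite-dimensional {\L}ojasiewicz inequality applied to the reduced energy $f$, and the gluing estimates $\Norm{w-\eta(\xi)}{V}\leq C\Norm{M(u)}{W}$ and $\abs{\mathcal{E}(u)-f(\xi)}\leq C\Norm{M(u)}{W}^2$ with the absorption $2(1-\theta)\geq 1$ are all the standard steps, and your handling of the two scales (analyticity only on $X,Y$; the final inequality on $V,W$) is the correct resolution of the one genuinely delicate point. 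The only details left implicit are routine: boundedness of $Q^{\ast}\vert_W$ (closed graph, giving closedness of $W_1$), the uniformity of the $C^1$ Taylor remainders, and the equivalence of norms identifying $\Norm{f^{\prime}(\xi)}{V_0^{\ast}}$ with $\Norm{Q^{\ast}M(\varphi+\xi+\eta(\xi))}{W}$ on the finite-dimensional range of $Q^{\ast}$. I see no gap.
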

\begin{proof}[{Proof of \Cref{thm:FredholmLoja}}]
	We will show that \Cref{hyp3.2,hyp3.4} and the assumptions of \Cref{cor:3.11} are satisfied for $\varphi\defeq\bar{u}$. We follow \cite[Appendix A]{DPS16}, where the result was proven for a special case. Let $Z$ as in \Cref{thm:FredholmLoja}, then (i) in \Cref{hyp3.4} is satisfied for $W\defeq Z^{\ast}$ since $V\hookrightarrow Z$ densely.

	We set $X\defeq V, Y\defeq W = Z^{\ast}$.
	Since $L(\varphi) = \mathcal{E}^{\prime\prime}(\bar{u})\colon V\to Z^{\ast}$ is Fredholm by assumption (iii), its kernel $V_0 \defeq \ker L(\varphi) \subset V\subset Z$ is finite-dimensional, thus (iii) in \Cref{cor:3.11} holds. Let $d\defeq \dim V_0<\infty$ and note that $V_0$ is closed in both $V$ and $Z$ and thus complemented in both spaces (cf. \Cref{lem:fin_co_dim complemented}). This implies that there exists $V_1\subset V$ closed such that $V=V_0\oplus V_1$ and there exists a projection $\tilde{Q}\in \mathcal{L}(Z)$ onto $V_0$. We will extend this to obtain a particular projection on $Z$ onto $V_0$. Note that ${Q \defeq \tilde{Q}\vert_{V}\colon V\to V}$ is also continuous, since as $V_0 = \Image \tilde{Q} = \Image Q$ is finite-dimensional, there exist $C, C^{\prime}, C^{\prime\prime}>0$ such that $\Norm{Qv}{V}\leq C\Vert{\tilde{Q}v}\Vert_{Z}\leq {C}^{\prime}\Norm{v}{Z}\leq C^{\prime\prime} \Norm{v}{V}$ for all $v\in V$, using that $V\hookrightarrow Z$. Now, $Q\in \mathcal{L}(V)$ satisfies \Cref{hyp3.2}. Denote by $Q^{\ast}\in \mathcal{L}(V^{\ast})$ the adjoint of $Q$.
	
	Assumption (ii) in \Cref{thm:FredholmLoja} immediately implies that assumption (iii) in \Cref{hyp3.4} and (ii) in \Cref{cor:3.11} are satisfied.
	
	In order to prove \Cref{hyp3.4} (iv), recall that by Schwarz's Theorem (cf. \cite[XIII, Theorem 5.3]{Lang12}), the second derivative $L(\varphi)\colon V\to Z^{\ast}$ is symmetric, i.e. for $v,w\in V$ we have $(L(\varphi)v)(w)= (L(\varphi)w)(v)$. Thus, for $v\in V$ and for any $w\in V_0 = \ker L(\varphi)$, we have $(L(\varphi)v)(w)= 0$, i.e. 
	\begin{align}\label{eq:inclusionImageL}
	\Image L(\varphi) \subset \{ z^{\ast}\in Z^{\ast} \mid z^{\ast}(v) =0 \text{ for all } v\in V_0\}=V_1^{\ast}\cap Z^{\ast},
	\end{align}
	using $V^{\ast} = V_0^{\ast}\oplus V_1^{\ast}$.
	As a next step, we show that the inclusion in \eqref{eq:inclusionImageL} is an equality. Since  $\ind{L(\varphi)}=0$, we have $\codim(\Image L(\varphi), Z^{\ast}) = \dim\ker L(\varphi) = \dim V_0=d$. Moreover, we have $\codim (V_1^{\ast}\cap Z^{\ast}, Z^{\ast})=d$. Indeed, considering $V_0$ as a subspace of $Z$, by \eqref{eq:inclusionImageL}, we have $V_1^{\ast}\cap Z^{\ast} = V_0^{\perp}$, the annihilator of $V_0$ in $Z^{\ast}$. By \cite[Proposition 11.13]{Brezis10}, we have $d=\dim V_0 = \codim V_0^{\perp} = \codim(V_1^{\ast}\cap Z^{\ast}, Z^{\ast})$. Hence, $\Image L (\varphi)$ and $V_1^{\ast}\cap Z^{\ast}$ are two subspaces with the same finite codimension $d$ in $Z^{\ast}$, with one contained in the other. Therefore, they have to be equal by \cite[Proposition 5 in II §7. 3.]{Bourbaki98}. Thus, equality holds in \eqref{eq:inclusionImageL}, so \Cref{hyp3.4} (iv) and assumption (iv) in \Cref{cor:3.11} are satisfied.
	
	It remains to check that $Q^{\ast}$ leaves $W=Z^{\ast}$ invariant. Let $z^{\ast}\in Z^{\ast}$ and $v\in Z$. Then 
	$\left( Q^{\ast} z^{\ast}\right) v = z^{\ast}\left(Qv\right)$ is linear in $v$ and for  $C = \norm{z^{\ast}}{Z^{\ast}}\norm{\tilde{Q}}{\mathcal{L}(Z)}\geq 0$, we have
	\begin{align}\label{eq:QvEstimate}
	\abs{z^{\ast}(Qv)}\leq \Norm{z^{\ast}}{Z^{\ast}} \Norm{Qv}{Z} = \Norm{z^{\ast}}{Z^{\ast}} \norm{\tilde{Q}v}{Z} \leq C \Norm{v}{Z}.
	\end{align}
	By \eqref{eq:QvEstimate}, $Q^{\ast}z^{\ast}\colon Z\to \R$ is an element of the dual space $Z^{\ast}$. This yields that \Cref{hyp3.4} (ii) and assumption (i) in \Cref{cor:3.11} are satisfied. Hence, we may apply \Cref{cor:3.11} to conclude that $\mathcal{E}$ satisfies a {\L}ojasiewicz--Simon gradient inequality in a neighborhood of $\varphi=\bar{u}$. 
\end{proof}
\section{Basic sequences in Banach spaces}\label{sec:BasicSeq}
The goal of this subsection is to provide a proof of \Cref{thm:ExistBasicSeq}. We therefore use a generalization of the notion of an orthonormal basis in a Hilbert space to the Banach space situation. We follow the presentation of \cite[Chapter 1]{AlbiacKalton06}.
\begin{defn}[{Cf. \cite[Definition 1.1.2 and Definition 1.1.5]{AlbiacKalton06}}]\label{def:SchauderBasis}
	Let $X$ be a Banach space and $(e_n)_{n\in \N}\subset X$ be a sequence. Suppose there exists a sequence $(\phi_k)_{k\in \N}\subset X^{\ast}$ such that
	\begin{enumerate}[(i)]
		\item  $\phi_k(e_n)=\delta_{k,n}$ for all $k, j\in \N$,
		\item $v = \sum_{k=1}^{\infty} \phi_k(v) e_k$ for all $v\in X$.
	\end{enumerate}
	Then, $(e_n)_{n\in\N}$ is called a \emph{Schauder basis} for $X$ with \emph{associated biorthogonal functionals} $(\phi_k)_{k\in \N}$. A sequence $(e_n)_{n\in N}$ in a Banach space $X$ is called a \emph{basic sequence} if it is a Schauder basis for $\overline{\operatorname{span}}\{e_n\mid n\in \N\}$.
\end{defn}
The following is an immediate consequence of the Uniform Boundedness Principle.
\begin{prop}[Cf. {\cite[Proposition 1.1.4]{AlbiacKalton06}}]
	Let $(e_n)_{n\in N}$ be a Schauder basis for a Banach space $X$. Then the \emph{natural projections} $S_N \colon X\to X, S_N v \defeq \sum_{k=1}^N \phi_k(v)e_k$ are uniformly bounded in $\mathcal{L}(X)$, i.e. we have
	\begin{align}\label{eq:defK}
	K \defeq \sup_{N\in \N}\Norm{S_N}{} <\infty.
	\end{align}
\end{prop}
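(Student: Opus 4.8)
The plan is to deduce this directly from the Uniform Boundedness Principle (Banach--Steinhaus theorem), as the phrasing of the statement already suggests. First I would verify that each $S_N$ is a well-defined bounded operator on $X$. By \Cref{def:SchauderBasis} the biorthogonal functionals satisfy $(\phi_k)\subset X^{\ast}$, so each summand $v\mapsto \phi_k(v)e_k$ is a bounded rank-one operator of norm $\norm{\phi_k}{X^{\ast}}\norm{e_k}{X}$, and hence $S_N=\sum_{k=1}^{N}\phi_k(\cdot)e_k\in\mathcal{L}(X)$ as a finite sum of such operators.

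Next I would establish that the family $\{S_N\}_{N\in\N}$ is pointwise bounded. Fix $v\in X$. Property (ii) in \Cref{def:SchauderBasis} says exactly that the series $\sum_{k=1}^{\infty}\phi_k(v)e_k$ converges to $v$ in $X$, i.e. that the partial sums $S_N v$ converge to $v$ as $N\to\infty$. Since a convergent sequence in a normed space is bounded, we obtain $\sup_{N\in\N}\norm{S_N v}{X}<\infty$ for every $v\in X$.

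Finally, because $X$ is a Banach space, the Uniform Boundedness Principle applies to $\{S_N\}_{N\in\N}\subset\mathcal{L}(X)$: pointwise boundedness upgrades to uniform boundedness, yielding $K\defeq\sup_{N\in\N}\norm{S_N}{\mathcal{L}(X)}<\infty$, which is precisely \eqref{eq:defK}.

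There is no genuinely hard step here; the only point requiring care — and the reason completeness of $X$ cannot be dropped — is the appeal to Banach--Steinhaus. It is worth emphasizing that this is also where it matters that \Cref{def:SchauderBasis} builds in $(\phi_k)\subset X^{\ast}$: had one instead only assumed that every $v$ admits a unique expansion $v=\sum c_k(v)e_k$ without a priori continuity of the coordinate maps $c_k$, one would first have to run the standard auxiliary-norm argument — equip $X$ with $\norm{v}{0}\defeq\sup_N\norm{\sum_{k=1}^{N}c_k(v)e_k}{X}$, show $(X,\norm{\cdot}{0})$ is complete and that the identity $(X,\norm{\cdot}{0})\to(X,\norm{\cdot}{X})$ is a continuous bijection, then invert it via the Open Mapping Theorem — before the continuity of the $S_N$ (and of the $c_k$) would even be available. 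In the present formulation this extra work is unnecessary.
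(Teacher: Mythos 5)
Your proof is correct and follows exactly the route the paper intends: the paper states this result as "an immediate consequence of the Uniform Boundedness Principle" (citing Albiac--Kalton), and your argument simply spells out the pointwise boundedness coming from the convergence $S_N v\to v$ and the application of Banach--Steinhaus, which is legitimate here since \Cref{def:SchauderBasis} already assumes $(\phi_k)\subset X^{\ast}$. Your closing remark correctly identifies that the classical auxiliary-norm/Open Mapping argument is only needed when continuity of the coordinate functionals is not built into the definition, so no extra work is required in this formulation.
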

The number $K$ is called the \emph{basis constant} of the sequence $(e_n)_{n\in \N}$. The following existence result is what we need to prove \Cref{thm:ExistBasicSeq}.
\begin{thm}\label{thm:ExistBasicSeq0}
	Let $V$ be an infinite-dimensional Banach space and let $\varepsilon>0$. Then there exists a basic sequence $(e_n)_{n\in \N}$ with basis constant $K\leq 1+\varepsilon$ and $\norm{e_n}{}=1$ for all $n\in \N$.
\end{thm}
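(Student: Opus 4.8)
The plan is to carry out Mazur's classical construction of basic sequences, arranged so as to keep the basis constant arbitrarily close to $1$. The heart of the matter is a perturbation lemma: \emph{given a finite-dimensional subspace $F\subset V$ and $\delta\in(0,1]$, there exists $x\in V$ with $\norm{x}{}=1$ such that $\norm{y}{}\leq(1+\delta)\norm{y+\lambda x}{}$ for all $y\in F$ and all $\lambda\in\R$.} Granting this, fix a sequence $(\delta_n)_{n\in\N}$ of positive reals with $\prod_{n=1}^{\infty}(1+\delta_n)\leq 1+\varepsilon$ (possible since, e.g., $\prod_n(1+\delta_n)\leq\exp(\sum_n\delta_n)$ and $\sum_n\delta_n$ may be taken as small as we like). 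Pick any $e_1\in V$ with $\norm{e_1}{}=1$, and inductively, having chosen $e_1,\dots,e_n$, apply the lemma with $F=\operatorname{span}\{e_1,\dots,e_n\}$ and $\delta=\delta_n$ to obtain $e_{n+1}$ with $\norm{e_{n+1}}{}=1$ and $\norm{y}{}\leq(1+\delta_n)\norm{y+\lambda e_{n+1}}{}$ for all $y\in\operatorname{span}\{e_1,\dots,e_n\}$, $\lambda\in\R$. We then show $(e_n)_{n\in\N}$ is basic with basis constant at most $\prod_n(1+\delta_n)\leq 1+\varepsilon$.

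For the perturbation lemma I would first use that the unit sphere $S_F=\{y\in F\mid\norm{y}{}=1\}$ is compact, so there is a finite $\tfrac{\delta}{2}$-net $y_1,\dots,y_N\in S_F$; by the Hahn--Banach theorem choose $\phi_i\in V^{\ast}$ with $\norm{\phi_i}{}=1$ and $\phi_i(y_i)=1$. Since $\dim V=\infty$, the intersection $\bigcap_{i=1}^{N}\ker\phi_i$ of finitely many hyperplanes is nontrivial, so we may pick $x$ in it with $\norm{x}{}=1$. Then for $y\in S_F$, choosing $y_i$ with $\norm{y-y_i}{}\leq\tfrac{\delta}{2}$ and using $\phi_i(x)=0$, we get for every $\lambda\in\R$
\[
\norm{y+\lambda x}{}\geq\norm{y_i+\lambda x}{}-\tfrac{\delta}{2}\geq\phi_i(y_i+\lambda x)-\tfrac{\delta}{2}=\phi_i(y_i)-\tfrac{\delta}{2}=1-\tfrac{\delta}{2}\geq\tfrac{1}{1+\delta},
\]
the last step because $(1-\tfrac{\delta}{2})(1+\delta)=1+\tfrac{\delta}{2}(1-\delta)\geq 1$ for $\delta\leq 1$. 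Homogeneity then upgrades this to $\norm{y+\lambda x}{}\geq\tfrac{1}{1+\delta}\norm{y}{}$ for all $y\in F$.

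It remains to verify that $(e_n)_{n\in\N}$ is basic with the claimed constant. By construction, for every $n\geq 2$ and all scalars $a_1,\dots,a_n$, applying the perturbation estimate at step $n-1$ with $y=\sum_{k=1}^{n-1}a_ke_k$ and $\lambda=a_n$ gives $\bigl\|\sum_{k=1}^{n-1}a_ke_k\bigr\|\leq(1+\delta_{n-1})\bigl\|\sum_{k=1}^{n}a_ke_k\bigr\|$. Iterating from $n$ down to $m+1$ shows that for all $m\leq n$,
\[
\Norm{\sum_{k=1}^{m}a_ke_k}{}\leq\Bigl(\prod_{j=m}^{n-1}(1+\delta_j)\Bigr)\,\Norm{\sum_{k=1}^{n}a_ke_k}{}\leq(1+\varepsilon)\,\Norm{\sum_{k=1}^{n}a_ke_k}{}.
\]
By the standard criterion for basic sequences in terms of the canonical partial-sum operators (cf.\ \cite[Chapter 1]{AlbiacKalton06}), this uniform estimate implies that $(e_n)$ is a Schauder basis of $\overline{\operatorname{span}}\{e_n\mid n\in\N\}$, hence a basic sequence, and that its basis constant $K=\sup_N\norm{S_N}{}$ is at most $1+\varepsilon$; since each $\norm{e_n}{}=1$ by construction, the proof is complete.

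The only genuinely nontrivial ingredient is the perturbation lemma, and within it the sole point where $\dim V=\infty$ enters is the nonemptiness of $\bigl(\bigcap_{i=1}^{N}\ker\phi_i\bigr)\setminus\{0\}$; the remainder is compactness of finite-dimensional spheres together with Hahn--Banach. I expect the only real care to be in bookkeeping the constants, so that the infinite product $\prod_n(1+\delta_n)$ stays below $1+\varepsilon$ while the net radii $\tfrac{\delta_n}{2}$ and the inequalities $1-\tfrac{\delta_n}{2}\geq(1+\delta_n)^{-1}$ remain compatible.
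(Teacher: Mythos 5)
Your proof is correct. The paper itself does not argue from scratch: it simply invokes \cite[Corollary 1.5.3]{AlbiacKalton06} for the existence of a basic sequence with basis constant at most $1+\varepsilon$, and obtains $\norm{e_n}{}=1$ only by ``investigating the proof'' of that corollary to see that the vectors are picked from the unit sphere. What you have done is reprove that corollary in full via Mazur's construction, which is essentially the argument underlying the cited result: your perturbation lemma (finite $\tfrac{\delta}{2}$-net on the compact sphere $S_F$, norming functionals $\phi_i$ from Hahn--Banach, a unit vector in $\bigcap_i\ker\phi_i$, which is nontrivial because this kernel has finite codimension while $\dim V=\infty$) is the standard lemma behind it, and your constant bookkeeping checks out, since $(1-\tfrac{\delta}{2})(1+\delta)=1+\tfrac{\delta}{2}(1-\delta)\geq 1$ for $\delta\leq 1$ and $\prod_n(1+\delta_n)\leq\exp(\sum_n\delta_n)$ can be forced below $1+\varepsilon$. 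The passage from the iterated estimate $\Norm{\sum_{k=1}^{m}a_ke_k}{}\leq(1+\varepsilon)\Norm{\sum_{k=1}^{n}a_ke_k}{}$ to basicness with $K\leq 1+\varepsilon$ is exactly the Grünblum criterion (\cite[Proposition 1.1.9]{AlbiacKalton06}), so that step is legitimate as well; note also that your inequality automatically forces the $e_n$ to be linearly independent (take $y=x$, $\lambda=-1$ to see $e_{n+1}\notin F$). What your route buys is a self-contained argument in which the normalization $\norm{e_n}{}=1$ is explicit by construction rather than extracted by inspecting a reference's proof, at the cost of a page of argument the paper outsources; it would even let one drop the sentence about examining the proof of \cite[Corollary 1.5.3]{AlbiacKalton06}.
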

\begin{proof}
	The existence of a basic sequence $(e_n)_{n\in \N}$ with basis constant less than $1+\varepsilon$ is exactly the statement of \cite[Corollary 1.5.3]{AlbiacKalton06}. Investigating its proof, we note that the $e_n$ are chosen from $S\defeq \{x\in X\mid \Norm{x}{}=1\}$, which proves the claim.
\end{proof}
Now, we are finally able to prove \Cref{thm:ExistBasicSeq}.
\begin{proof}[{Proof of \Cref{thm:ExistBasicSeq}}]
	Using \Cref{thm:ExistBasicSeq0}, we obtain a sequence $(e_n)_{n\in \N}$ with $\norm{e_n}{}=1$ for all $n\in \N$ which is a basic sequence, i.e. a Schauder basis for the Banach space $X\defeq \overline{\operatorname{span}}\{e_n\mid n\in \N\}\subset V$. By \Cref{def:SchauderBasis} there exists an associated biorthogonal sequence $(\phi_k)_{k\in\N}\subset X^{\ast}$. Note that by \Cref{thm:ExistBasicSeq0}, the basis constant $K$ defined in \eqref{eq:defK} satisfies $K \leq 1+\varepsilon$. For $x\in X$ and $k\in \N$, we have setting $S_0 \defeq 0 \in \mathcal{L}(X)$
	\begin{align*}
	\abs{\phi_k(x)} = \norm{\phi_k(x)e_k}{} = \norm{S_k x - S_{k-1}x}{} \leq 2 K \norm{x}{}, 
	\end{align*}
	thus $\norm{\phi_k}{X^{\ast}} \leq 2K \leq 2(1+\varepsilon)$. By the Hahn--Banach Theorem, there exist extensions of $\phi_k \colon X \to \R$ also denoted $\phi_k \colon V \to \R$ with $\norm{\phi_k}{V^{\ast}} = \norm{\phi_k}{X^{\ast}} \leq 2(1+\varepsilon)$.
\end{proof}

\section*{Acknowledgments}
The author would like to thank Anna Dall’Acqua, Marius Müller and Adrian Spener for helpful discussions and comments.
\section*{Funding}
This work was supported by the Deutsche Forschungsgemeinschaft (DFG, German Research Foundation)-Projektnummer: 404870139.
 
\bibliographystyle{abbrv}
\bibliography{Literatur}

\end{document}